%Template for the AMS artcle style.

%--------------------------------------------------------------------------------

%%
%% This is LaTeX2e input.
%%

%% The following tells LaTeX that we are using the
%% style file amsart.cls (That is the AMS article style
%%
%\documentclass{amsart}

%% This has a default type size 10pt.  Other options are 11pt and 12pt
%% This are set by replacing the command above by
%% \documentclass[11pt]{amsart}
%%
%% or
%%
\documentclass{amsart}
\usepackage[margin=1.5in]{geometry}
%%
%% Some mathematical symbols are not included in the basic LaTeX
%% package.  Uncommenting the following makes more commands
%% available.
%%
%\usepackage[pagewise]{lineno}\linenumbers

%%
%% The following is commands are used for importing various types of
%% grapics.
%%

%\usepackage[pagewise]{lineno}\linenumbers
%\documentclass{amsart}
%%%%%%%%%%%%%%%%%%%%%%%%%%%%%%%%%%%%%%%%%%%%%%%%%%%%%%%%%%%%%%%%%%%%%%%%%%%%%%%%%%%%%%%%%%%%%%%%%%%%%%%%%%%%%%%%%%%%%%%%%%%%%%%%%%%%%%%%%%%%%%%%%%%%%%%%%%%%%%%%%%%%%%%%%%%%%%%%%%%%%%%%%%%%%%%%%%%%%%%%%%%%%%%%%%%%%%%%%%%%%%%%%%%%%%%%%%%%%%%%%%%%%%%%%%%%
\usepackage{amsfonts}

\usepackage{setspace}
\usepackage{graphicx}

\usepackage{hyperref}
\usepackage{graphicx}
\usepackage{amsmath, amsthm, amscd, amsfonts, amssymb, graphicx, color}
 \usepackage{url}
\usepackage{enumerate}
 \makeatletter
\let\reftagform@=\tagform@
\def\tagform@#1{\maketag@@@{(\ignorespaces\textcolor{blue}{#1}\unskip\@@italiccorr)}}
\renewcommand{\eqref}[1]{\textup{\reftagform@{\ref{#1}}}}
\makeatother
\usepackage{hyperref}
\hypersetup{colorlinks=true, linkcolor=red, anchorcolor=green,
citecolor=cyan, urlcolor=red, filecolor=magenta, pdftoolbar=true}

\usepackage{epsfig}        % For postscript
\usepackage{epic,eepic}       % For epic and eepic output from xfig

\setcounter{MaxMatrixCols}{10}
%TCIDATA{OutputFilter=LATEX.DLL}
%TCIDATA{Version=5.50.0.2953}
%TCIDATA{<META NAME="SaveForMode" CONTENT="1">}
%TCIDATA{BibliographyScheme=Manual}
%TCIDATA{Created=Sunday, July 05, 2009 14:35:16}
%TCIDATA{LastRevised=Wednesday, July 15, 2009 16:15:34}
%TCIDATA{<META NAME="GraphicsSave" CONTENT="32">}
%TCIDATA{<META NAME="DocumentShell" CONTENT="Articles\SW\AMS Journal Article">}
%TCIDATA{Language=American English}
%TCIDATA{CSTFile=amsartci.cst}

\newtheorem{theorem}{Theorem}
\newtheorem{rem}{Remark}
\theoremstyle{plain}

\newtheorem{corollary}{Corollary}

\newtheorem{definition}{Definition}

\newtheorem{lemma}{Lemma}
\newtheorem{thm}{Theorem}

\newtheorem{proposition}{Proposition}
\newtheorem{remark}{Remark}

\numberwithin{equation}{section}

%\input{tcilatex}
%\doublespacing

\begin{document}

\title[On Pompeiu--Chebyshev functional and its generalization]{On Pompeiu--Chebyshev functional and its generalization}

\author[M.W. Alomari]{Mohammad W. Alomari}

\address{Department of Mathematics, Faculty of Science and
Information Technology, Irbid National University, P.O. Box 2600,
Irbid, P.C. 21110, Jordan.} \email{mwomath@gmail.com}

\date{\today}
\subjclass[2010]{26D15, 26D10, 26A48}

\keywords{Chebyshev functional, Gr\"{u}ss inequality, Pompeiu MVT,
CBS inequality, Hardy inequality.}

\begin{abstract}
In this work, a generalization of Chebyshev functional is
presented. New inequalities of Gr\"{u}ss type via Pompeiu's mean
value theorem are established. Improvements of some old
inequalities are proved. A generalization of pre-Gr\"{u}ss
inequality is elaborated. Some remarks to further generalization
of Chebyshev functional are presented. As applications, bounds for
the reverse of CBS inequality are deduced. Hardy type inequalities
on bounded real interval $\left[a,b\right]$ under some other
circumstances are introduced. Other related ramified inequalities
for differentiable functions are also given.
\end{abstract}

\maketitle

%=============================================================================
\section{Introduction}
%=============================================================================

The difference
\begin{align}
\label{eq1.1.5}\mathcal{T}\left( {f,g} \right) = \frac{1}{{b -
a}}\int_a^b {f\left( t \right)g\left( t \right)dt}  - \frac{1}{{b
- a}}\int_a^b {f\left( t \right)dt}  \cdot \frac{1}{{b -
a}}\int_a^b {g\left( t \right)dt}.
\end{align}
is called `the Chebyshev functional' which it has multiple
applications in several mathematical branches specially in
Numerical integrations and Probability Theory. For more detailed
history see \cite{MPF}.

The most famous bounds of the Chebyshev functional are
incorporated in the following theorem:
\begin{theorem}\label{thm.Chebyshev.bounds}
Let $f,g:[c,d] \to \mathbb{R}$ be two absolutely continuous
functions, then
\begin{align}
&\left|{\mathcal{T}\left( {f,g} \right)} \right|
\nonumber\\
&\le\left\{
\begin{array}{l} \frac{{\left( {d - c} \right)^2 }}{{12}}\left\|
{f^{\prime}} \right\|_\infty  \left\| {g^{\prime}} \right\|_\infty
,\,\,\,\,\,\,\,\,\,{\rm{if}}\,\,f^{\prime},g^{\prime} \in L_{\infty}\left(\left[c,d\right]\right),\,\,\,\,\,\,\,\,{\rm{proved \,\,in \,\,}}{\text{\cite{Cebysev}}}\\
 \\
\frac{1}{4}\left( {M_1 - m_1} \right)\left( {M_2 - m_2}
\right),\,\,\, {\rm{if}}\,\, m_1\le f \le M_1,\,\,\,m_2\le g \le
M_2, \,\,{\rm{proved \,\,in \,\,}}{\text{\cite{Gruss}}}\\
 \\
\frac{{\left( {d - c} \right)}}{{\pi ^2 }}\left\| {f^{\prime}}
\right\|_2 \left\| {g^{\prime}} \right\|_2
,\,\,\,\,\,\,\,\,\,\,\,\,\,\,\,\,{\rm{if}}\,\,f^{\prime},g^{\prime}
\in
L_{2}\left(\left[c,d\right]\right),\,\,\,\,\,\,\,\,\,\,\,{\rm{proved
\,\,in\,\,
}}{\text{\cite{L}}}\\
\\
\frac{1}{8}\left( {d - c} \right)\left( {M - m} \right) \left\|
{h^{\prime}_2} \right\|_{\infty},\,\,\, {\rm{if}}\,\, m\le f \le
M,\,g^{\prime} \in L_{\infty}\left(\left[c,d\right]\right),
\,\,{\rm{proved \,\,in \,\,}}{\text{\cite{O}}}
\end{array} \right. \label{general.gruss.inequality}
\end{align}
The constants $\frac{1}{12}$, $\frac{1}{4}$, $\frac{1}{\pi^2}$ and
$\frac{1}{8}$ are the best possible.
\end{theorem}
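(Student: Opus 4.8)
The plan is to deduce all four estimates --- and the sharpness of the four constants --- from \emph{Korkine's identity}
\[
\mathcal{T}(f,g)=\frac{1}{2(d-c)^{2}}\int_{c}^{d}\int_{c}^{d}\bigl(f(x)-f(y)\bigr)\bigl(g(x)-g(y)\bigr)\,dx\,dy ,
\]
which one checks by expanding the integrand. It exhibits $\mathcal{T}$ as a symmetric, positive semidefinite bilinear form with $\mathcal{T}(f,f)=\frac{1}{d-c}\int_{c}^{d}(f-\bar f)^{2}\,dt$, where $\bar f:=\frac{1}{d-c}\int_{c}^{d}f$; applying the Cauchy--Buniakowski--Schwarz inequality to this form gives the \emph{pre-Gr\"uss inequality} $|\mathcal{T}(f,g)|\le\mathcal{T}(f,f)^{1/2}\,\mathcal{T}(g,g)^{1/2}$. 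The first and fourth estimates then come from Korkine's identity directly (the latter after an integration by parts), the second and third from pre-Gr\"uss.

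For the first bound I would insert $|f(x)-f(y)|\le\|f'\|_{\infty}|x-y|$ and its analogue for $g$ into Korkine's identity and evaluate $\int_{c}^{d}\int_{c}^{d}(x-y)^{2}\,dx\,dy=\frac{(d-c)^{4}}{6}$, which yields the constant $\frac{1}{12}$. For the second and third bounds, pre-Gr\"uss reduces matters to estimating $\mathcal{T}(h,h)$. If $m\le h\le M$, integrating $(M-h)(h-m)\ge0$ over $[c,d]$ gives $\mathcal{T}(h,h)\le(M-\bar h)(\bar h-m)\le\frac14(M-m)^{2}$, producing Gr\"uss's $\frac14$. If $h'\in L_{2}$, Wirtinger's inequality in the sharp form $\int_{c}^{d}(h-\bar h)^{2}\,dt\le\frac{(d-c)^{2}}{\pi^{2}}\int_{c}^{d}(h')^{2}\,dt$ gives $\mathcal{T}(h,h)\le\frac{d-c}{\pi^{2}}\|h'\|_{2}^{2}$, and multiplying the two square roots from pre-Gr\"uss yields $\frac{1}{\pi^{2}}$.

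For the fourth bound (where I write $g$ for the function called $h_2$ in the statement), set $F(t):=\int_{c}^{t}\bigl(f(s)-\bar f\bigr)\,ds$, so that $F(c)=F(d)=0$ and $F'=f-\bar f$. Since $\mathcal{T}(f,g)=\frac{1}{d-c}\int_{c}^{d}F'(t)\,g(t)\,dt$, one integration by parts (legitimate because $F$ is Lipschitz and $g$ is absolutely continuous) gives $\mathcal{T}(f,g)=-\frac{1}{d-c}\int_{c}^{d}F(t)\,g'(t)\,dt$, hence $|\mathcal{T}(f,g)|\le\frac{\|g'\|_{\infty}}{d-c}\int_{c}^{d}|F(t)|\,dt$. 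The hard part will be the sharp $L_{1}$-estimate $\int_{c}^{d}|F(t)|\,dt\le\frac{(d-c)^{2}}{8}(M-m)$: the crude pointwise bound $|F(t)|\le\frac{(t-c)(d-t)}{d-c}(M-m)$ is too wasteful --- it only gives the constant $\frac16$ --- so one has to use the global structure of $F$ simultaneously, namely that $F$ vanishes at both endpoints while $F'=f-\bar f$ ranges over a subinterval $[m-\bar f,\,M-\bar f]$ of length $M-m$ that contains $0$. A convexity (bang--bang) argument shows that the extremal profile of $F$ is a single triangular bump whose slopes are the two endpoints of that subinterval, and optimising its height --- the worst case being $\bar f=\tfrac{m+M}{2}$, i.e. slopes $\pm\tfrac{M-m}{2}$ --- produces exactly $\frac18$.

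Finally, to see that none of the constants can be lowered I would exhibit the extremisers: $f(t)=g(t)=t$ for $\frac1{12}$ (then $\mathcal{T}(f,g)=\frac{(d-c)^{2}}{12}$); $f=g$ a two-valued step function with pieces of equal length for $\frac14$ (then $\mathcal{T}(f,f)=\frac14(M-m)^{2}$); $f(t)=g(t)=\cos\bigl(\tfrac{\pi(t-c)}{d-c}\bigr)$, the Wirtinger extremal, for $\frac1{\pi^{2}}$; and $f=m+(M-m)\chi_{[\frac{c+d}{2},\,d]}$ together with $g(t)=t$ for $\frac18$, for which a direct computation gives $\mathcal{T}(f,g)=\frac18(d-c)(M-m)\|g'\|_{\infty}$. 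In each case the inequalities invoked along the way collapse to equalities. (All four bounds are classical --- see \cite{Cebysev,Gruss,L,O}; the argument above only reassembles the standard proofs in one place.)
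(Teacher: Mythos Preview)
The paper does not actually prove this theorem: it is stated in the Introduction purely as background, with each of the four bounds attributed to its original source (\v{C}eby\v{s}ev, Gr\"uss, Lupa\c{s}, Ostrowski) and no proof environment following the statement. So there is no ``paper's own proof'' to compare your proposal against --- you are supplying what the paper deliberately omits.

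That said, your reconstruction is the standard one and is essentially correct for the first three bounds: Korkine's identity plus the Lipschitz estimate gives the $\tfrac{1}{12}$; pre-Gr\"uss together with the elementary variance bound $(M-\bar h)(\bar h-m)\le\tfrac14(M-m)^2$ gives the $\tfrac14$; and pre-Gr\"uss plus the sharp Wirtinger inequality gives the $\tfrac{1}{\pi^2}$. The extremisers you name are the right ones.

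For the fourth (Ostrowski) bound your outline is also the right strategy --- integrate by parts against $F(t)=\int_c^t(f-\bar f)$ and then bound $\|F\|_1$ --- but the ``bang--bang'' step is where all the content lies and you have only asserted it. To turn it into a proof you must show rigorously that among all $F$ with $F(c)=F(d)=0$ and $F'\in[m-\bar f,\,M-\bar f]$ (an interval of length $M-m$ containing $0$), the $L_1$-norm is maximised by a single tent, and then optimise over the location of $\bar f$ in $[m,M]$. This is true, but the convexity/extremality argument needs to be written out; as it stands the sketch would not convince a careful reader that $\tfrac18$ (rather than, say, the $\tfrac16$ from the crude pointwise bound you mention) is what actually comes out. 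Ostrowski's original argument in \cite{O} proceeds differently, so if you want a self-contained proof here you should either fill in the variational step or follow his route.
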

The first inequality in \eqref{general.gruss.inequality}, is well
known as Chebyshev inequality (sometimes called Chebyshev second
inequality) which deals with differentiable functions whose first
derivatives are bounded. The second inequality in
\eqref{general.gruss.inequality} is called the Gr\"{u}ss
inequality which deals with integrable bounded functions.

Far away from this,  Pompeiu in \cite{P} established a variant
Mean Value Theorem (MVT) for real functions defined on a real
interval that does not include `$0$'; nowadays known as Pompeiu's
mean value theorem (PMVT), which states that:
\begin{theorem}
 For every real valued function $f$ differentiable on
an interval $[a,b]$ not containing $0$ and for all pairs $x_1\ne
x_2$ in $[a,b]$ there exists a point $\xi\in (x_1,x_2)$ such that
\begin{align}
\frac{{x_1 f\left( {x_2 } \right) - x_2 f\left( {x_1 }
\right)}}{{x_1  - x_2 }} = f\left( \xi \right) - \xi f'\left( \xi
\right).
\end{align}
\end{theorem}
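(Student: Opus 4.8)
The plan is to derive Pompeiu's identity from the classical Lagrange mean value theorem by means of the reciprocal substitution $t\mapsto 1/t$. Because $0\notin[a,b]$, the interval $[a,b]$ lies entirely inside $(0,\infty)$ or inside $(-\infty,0)$, and on either of these the map $\varphi(t)=1/t$ is well defined, continuous, strictly monotone, and continuously differentiable. Fix $x_1\ne x_2$ in $[a,b]$; since the right-hand side of the claimed equality is unchanged under interchanging $x_1$ and $x_2$, I may assume $x_1<x_2$. Put $y_1=1/x_1$ and $y_2=1/x_2$, two distinct numbers, and let $J$ be the closed interval with endpoints $y_1$ and $y_2$; note $J\subset\varphi([a,b])$, so $J$ avoids $0$.

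Next I would introduce the auxiliary function
$$g(y)=y\,f\!\left(\frac{1}{y}\right),\qquad y\in J.$$
Since $f$ is differentiable on $[a,b]$ and $\varphi$ is differentiable on $J$, the product and chain rules show that $g$ is differentiable on $J$ with
$$g'(y)=f\!\left(\frac{1}{y}\right)-\frac{1}{y}\,f'\!\left(\frac{1}{y}\right).$$
Applying Lagrange's mean value theorem to $g$ on $J$ yields a point $\eta$ strictly between $y_1$ and $y_2$ such that
$$\frac{g(y_1)-g(y_2)}{y_1-y_2}=g'(\eta).$$

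What remains is bookkeeping. Set $\xi=1/\eta$; because $\varphi$ is a strictly monotone bijection, $\xi$ lies strictly between $x_1$ and $x_2$, hence $\xi\in(x_1,x_2)$. From the definition of $g$ one has $g(y_1)=f(x_1)/x_1$, $g(y_2)=f(x_2)/x_2$, and $y_1-y_2=1/x_1-1/x_2=(x_2-x_1)/(x_1x_2)$, while $g'(\eta)=f(\xi)-\xi f'(\xi)$. Substituting these into the displayed equality and cancelling the common factor $x_1x_2$ in the left-hand fraction gives
$$\frac{x_2 f(x_1)-x_1 f(x_2)}{x_2-x_1}=f(\xi)-\xi f'(\xi),$$
which is precisely the asserted formula after multiplying numerator and denominator on the left by $-1$. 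I do not anticipate a genuine obstacle: the only points needing attention are that the reciprocal substitution stays away from $0$ — which is exactly the hypothesis $0\notin[a,b]$ — and that $\eta$, and therefore $\xi$, is an interior point of its interval, which is precisely what Lagrange's theorem delivers. The one mildly nonobvious ingredient is the choice of the weight $y$ in $g(y)=y\,f(1/y)$; replacing $f(1/t)$ directly into Lagrange's or Cauchy's theorem only recovers the ordinary mean value theorem, so this weighting is what produces the extra term $-\xi f'(\xi)$.
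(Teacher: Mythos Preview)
Your argument is correct and is essentially the same approach the paper indicates: the paper suggests applying Cauchy's (generalized) mean value theorem to the pair $x\mapsto f(x)/x$ and $x\mapsto 1/x$, and your substitution $y=1/x$ together with Lagrange's theorem applied to $g(y)=y\,f(1/y)=F\circ G^{-1}(y)$ is exactly that argument rewritten after inverting $G(x)=1/x$. The only cosmetic difference is that you carry out the change of variables explicitly before invoking the mean value theorem, whereas the paper leaves it packaged inside Cauchy's form.
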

The geometrical interpretation of this theorem as given in
\cite{Sahoo}: the tangent at the point $\left( {\xi ,f\left( \xi
\right)} \right)$ intersects  the $y$-axis at the same point as
the secant line connecting the points $ \left( {x_1 ,f\left( {x_1
} \right)} \right)$ and $ \left( {x_2 ,f\left( {x_2 } \right)}
\right)$. The proof of PMVT can be done by applying the
generalized MVT for derivatives on the functions $x
\mapsto\frac{f\left(x\right)}{x}$ and $x\mapsto \frac{1}{x}$,
where $f$ is differentiable on $[a,b]$. \\

In 2005, and in viewing of PMVT, Pachpatte \cite{Pachpatte}
proposed the following corresponding Chebyshev functional: For
continuous functions $p, q : [a,b] \to \mathbb{R}$ which are
differentiable on $(a, b)$, define
\begin{align}
\label{Pompeiu.Cheb.2}\mathcal{P}  \left( {p,q} \right) = \int_a^b
{p\left( x \right)q\left( x \right)dx}  - \frac{3}{{b^3 - a^3
}}\left( {\int_a^b {xp\left( x \right)dx} } \right)\left(
{\int_a^b {xq\left( x \right)dx} } \right),
\end{align}
and let us call it ``Pompeiu--Chebyshev functional''. In
\cite{Pachpatte}, we find the following result:
\begin{theorem}
\label{thm1.Pachpatte.2005}Let $0\notin [a,b]$. Let $f,g:[a,b] \to
\mathbb{R}$ be two
 continuous functions on $[a,b]$ and differentiable on $(a,b)$.
 Then
\begin{align}
\label{eq2.Pachpatte2005} \left| {\mathcal{P}  \left( {p,q}
\right)} \right| \le \left( {b - a} \right)\left( {1 - \frac{3}{4}
\cdot \frac{{\left( {a + b} \right)^2 }}{{a^2  + ab + b^2 }}}
\right)\left\| {f - \ell f'} \right\|_\infty  \left\| {g - \ell
g'} \right\|_\infty,
\end{align}
where $\ell\left( t \right) = t$, for all $t\in [a,b]$.
\end{theorem}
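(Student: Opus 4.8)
The plan is to reduce the inequality to the classical Korkine--type machinery: first produce a weighted double-integral identity for $\mathcal{P}(f,g)$, then insert the bound coming from PMVT, and finally evaluate an elementary double integral.

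The first and only creative step is to prove the identity
\[
\mathcal{P}(f,g) = \frac{3}{2\left(b^3 - a^3\right)}\int_a^b\int_a^b \bigl(s f(t) - t f(s)\bigr)\bigl(s g(t) - t g(s)\bigr)\,dt\,ds,
\]
which is the Pompeiu analogue of the Korkine identity $\mathcal{T}(f,g) = \tfrac{1}{2(b-a)^2}\iint (f(t)-f(s))(g(t)-g(s))$. This is a direct expansion: the product under the double integral splits into four terms; the two ``diagonal'' terms $s^2 f(t)g(t)$ and $t^2 f(s)g(s)$ each contribute $\tfrac{b^3-a^3}{3}\int_a^b fg$, using $\int_a^b s^2\,ds = \tfrac{b^3-a^3}{3}$, while the two ``cross'' terms $-st f(t)g(s)$ and $-st f(s)g(t)$ each contribute $-\bigl(\int_a^b t f\,dt\bigr)\bigl(\int_a^b t g\,dt\bigr)$. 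Collecting and dividing through by $\tfrac{2(b^3-a^3)}{3}$ gives exactly $\mathcal{P}(f,g)$.

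Second, fix $s \ne t$ in $[a,b]$. Since $0 \notin [a,b]$, PMVT applies and yields $\xi$ between $s$ and $t$ with
\[
\frac{s f(t) - t f(s)}{s - t} = f(\xi) - \xi f'(\xi) = \bigl(f - \ell f'\bigr)(\xi),
\]
hence $\lvert s f(t) - t f(s)\rvert \le \lvert s - t\rvert\,\lVert f - \ell f'\rVert_\infty$ (the case $s = t$ being trivial), and likewise for $g$. Substituting both bounds into the identity,
\[
\lvert\mathcal{P}(f,g)\rvert \le \frac{3}{2\left(b^3 - a^3\right)}\,\lVert f - \ell f'\rVert_\infty\,\lVert g - \ell g'\rVert_\infty\,\int_a^b\int_a^b (s - t)^2\,dt\,ds.
\]
Third, evaluate $\int_a^b\int_a^b (s-t)^2\,dt\,ds = \tfrac{2(b-a)(b^3-a^3)}{3} - \tfrac{(b^2-a^2)^2}{2}$, plug it in, and simplify using $b^3 - a^3 = (b-a)(a^2+ab+b^2)$ and $b^2 - a^2 = (b-a)(a+b)$; the scalar prefactor collapses to $(b-a)\bigl(1 - \tfrac{3}{4}\cdot\tfrac{(a+b)^2}{a^2+ab+b^2}\bigr)$, which is the asserted constant.

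I do not expect a genuine obstacle: the only point needing a moment's thought is guessing the correct weighting in the double integral (the pairing $s f(t) - t f(s)$ rather than $f(t) - f(s)$), which is dictated precisely so that PMVT can be applied termwise; everything after that is bookkeeping and the evaluation of a quadratic double integral.
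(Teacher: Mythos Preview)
Your proof is correct. The paper does not actually prove this theorem---it is quoted from Pachpatte \cite{Pachpatte} as background---so there is no in-paper proof to compare against directly. That said, your argument is precisely the machinery the paper goes on to develop: your double-integral identity is the identity \eqref{eq2.Alomari2017} (equivalently \eqref{Pompeiu.Chebyshev.Identity} with $h(t)=t$), rescaled by $\tfrac{3}{b^3-a^3}$; your PMVT bound $|sf(t)-tf(s)|\le |s-t|\,\|f-\ell f'\|_\infty$ is the $L^\infty$ case of Lemma~\ref{Dragomir.lemma}; and your final evaluation reproduces Dragomir's inequality \eqref{eq1.Dragomir2015}. Indeed, a one-line algebraic check shows Pachpatte's constant collapses:
\[
(b-a)\left(1-\frac{3}{4}\cdot\frac{(a+b)^2}{a^2+ab+b^2}\right)=\frac{(b-a)^3}{4(a^2+ab+b^2)}=\frac{(b-a)^4}{4(b^3-a^3)},
\]
which is exactly $\tfrac{3}{b^3-a^3}\cdot\tfrac{(b-a)^4}{12}$, so your proof and Dragomir's bound \eqref{eq1.Dragomir2015} are the same inequality in two presentations.
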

In the same year, Pe\v{c}ari\'{c} and Ungar \cite{Pecaric} studied
the Pompeiu--Chebyshev functional \eqref{Pompeiu.Cheb.2}, and they
obtained the following result.
\begin{theorem}
Let the functions $f, g: [a,b]\to \mathbb{R}$ be continuous on
$[a,b]$ and differentiable on $(a,b)$ with $0 < a < b$. Then for
$\frac{1}{p}+\frac{1}{q}=1$, with  $1\le p,q\le \infty$, the
inequality
\begin{multline}
\label{eq.pecaric2006}\left| {\mathcal{P } \left( {f,g} \right)}
\right| \le \frac{{\left( {b - a} \right)^{\frac{1}{p}} }}{{\left(
{b^2  - a^2 } \right)}}\left[ {\left\| {f - \ell f'} \right\|_p
\int_a^b {x\left| {g\left( x \right)} \right|A\left( {x,p}
\right)dx} } \right.
\\
\left. { + \left\| {g - \ell g'} \right\|_p \int_a^b {x\left|
{f\left( x \right)} \right|A\left( {x,p} \right)dx} } \right]
\end{multline}
holds, where $\ell (t)=t$, $t\in [a,b]$ and
\begin{multline*}
A\left( {x,p} \right) = \left( {\frac{{a^{2 - q}  - x^{2 - q}
}}{{\left( {1 - 2q} \right)\left( {2 - q} \right)}} + \frac{{x^{2
- q}  - a^{1 + q} x^{1 - 2q} }}{{\left( {1 - 2q} \right)\left( {1
+ q} \right)}}} \right)^{\frac{1}{q}}
\\
+ \left( {\frac{{b^{2 - q}  - x^{2 - q} }}{{\left( {1 - 2q}
\right)\left( {2 - q} \right)}} + \frac{{x^{2 - q}  - b^{1 + q}
x^{1 - 2q} }}{{\left( {1 - 2q} \right)\left( {1 + q} \right)}}}
\right)^{\frac{1}{q}},
\end{multline*}
for all $x\in [a,b]$.
\end{theorem}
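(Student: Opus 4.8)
My plan is to imitate the standard path to Gr\"{u}ss-type estimates---an identity of Korkine type, a mean value theorem, and then H\"{o}lder's inequality---with Pompeiu's MVT playing the role of the ordinary one. The starting point is the algebraic identity, valid for $f,g$ continuous on $[a,b]$ with $0\notin[a,b]$,
\[
\mathcal{P}\left(f,g\right)=\int_a^b g\left(x\right)\!\left(f\left(x\right)-\frac{3x}{b^3-a^3}\int_a^b tf\left(t\right)dt\right)dx=\frac{3}{2(b^3-a^3)}\int_a^b\!\int_a^b\bigl(wf(x)-xf(w)\bigr)\bigl(wg(x)-xg(w)\bigr)dw\,dx,
\]
where the second equality follows by expanding the inner integral and using $\int_a^b t^2\,dt=\frac{b^3-a^3}{3}$; the last expression is the analogue for $\mathcal{P}$ of Korkine's identity behind $\mathcal{T}$. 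Two features will matter: the derivative data of $f$ and $g$ enter only through the ``Pompeiu differences'' $wf(x)-xf(w)$, $wg(x)-xg(w)$, and the representation is symmetric under $f\leftrightarrow g$ --- this symmetry is what will force the two summands on the right of the claimed inequality.

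Next I would invoke PMVT in the form that actually yields an $L^p$ estimate. Applying the generalized (Cauchy) MVT in its Newton--Leibniz form to $s\mapsto f(s)/s$ and $s\mapsto 1/s$ on the segment joining $x$ and $w$---admissible precisely because $0\notin[a,b]$, exactly as in the proof of PMVT recalled above---gives the exact identity
\[
wf\left(x\right)-xf\left(w\right)=-xw\int_x^w\frac{f\left(s\right)-sf'\left(s\right)}{s^2}\,ds ,
\]
and the same for $g$. Substituting this for one of the two Pompeiu differences, bounding the other by $\bigl|wg(x)-xg(w)\bigr|\le w|g(x)|+x|g(w)|$, and then using the $f\leftrightarrow g$ symmetry to fold the two resulting halves together, reduces the proof to estimating, for each fixed $x\in[a,b]$, an iterated integral of the shape $x|g(x)|\int_a^b w^{\alpha}\bigl|\int_x^w\frac{f(s)-sf'(s)}{s^2}\,ds\bigr|\,dw$, together with its twin in which $f$ and $g$ change places.

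The inner integral is handled by H\"{o}lder with the conjugate pair $p,q$: pointwise in $w$ it is at most $\bigl(\int_x^w|f(s)-sf'(s)|^p\,ds\bigr)^{1/p}\bigl(\int_x^w s^{-2q}\,ds\bigr)^{1/q}$. Splitting $[a,b]=[a,x]\cup[x,b]$ and applying H\"{o}lder a second time in $w$---pairing the $L^p$-factor against the weight times the $L^q$-factor---and using $\int_a^x(s-a)\,|f(s)-sf'(s)|^p\,ds\le(b-a)\,\|f-\ell f'\|_p^p$ (and similarly on $[x,b]$) peels off the constant $(b-a)^{1/p}$ and the full norm $\|f-\ell f'\|_p$, leaving a purely numerical factor of the form
\[
\left(\int_a^x w^{q}\!\int_w^x s^{-2q}\,ds\,dw\right)^{1/q}+\left(\int_x^b w^{q}\!\int_x^w s^{-2q}\,ds\,dw\right)^{1/q}.
\]
Performing the elementary integrations---first $\int s^{-2q}\,ds=\frac{s^{1-2q}}{1-2q}$, then the outer antiderivatives of $w^{q}$ and $w^{1-q}$---and regrouping the resulting rational functions of $a$, $x$, $b$ identifies this factor with $A(x,p)$, while the remaining numerical constants combine to $\frac{1}{b^2-a^2}$; the leftover $x$ from the Pompeiu difference recombines with $|g(x)|$, resp. $|f(x)|$, to give the weights $x|g(x)|$, resp. $x|f(x)|$, of the final integrals, and with $\ell(t)=t$ this is the asserted inequality.

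I expect the hard part to be the precise bookkeeping. One must confirm that the H\"{o}lder constant really does collapse to the stated closed form $A(x,p)$ and that the remaining factors assemble exactly to $\frac{1}{b^2-a^2}$; this forces one to fix the weight $\alpha$ in the reduction exactly right (the crudest triangle-inequality bound can introduce an extra power of $w$ that must be absorbed sharply rather than wastefully), and to keep the denominators $(1-2q)$, $(2-q)$, $(1+q)$ straight throughout. A convenient check: differentiating in $x$ the first bracket of $A(x,p)^q$ gives $x^{-2q}\int_a^x t^q\,dt$, which recognizes that bracket as $\int_a^x w^{q}\int_w^x s^{-2q}\,ds\,dw$. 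The other delicate point is the borderline exponents: $p=1$ forces $q=\infty$ and $p=\infty$ forces $q=1$, and the values $q=2$, $q=\tfrac12$ annihilate denominators of $A(x,p)$, so there the formula must be read as the relevant limit---for instance $A(x,\infty)=\frac{(x-a)^2+(b-x)^2}{2x}$ when $q=1$---and one must verify the inequality is preserved in passing to these limits.
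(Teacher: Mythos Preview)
This theorem is not proved in the present paper: it appears in the introduction as a quoted result of Pe\v{c}ari\'{c} and Ungar, with no argument supplied here. There is therefore no ``paper's own proof'' against which to compare your proposal.

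On the proposal itself: the overall architecture is sound, and your identification of $A(x,p)$ is correct---one checks directly that
\[
\left(\int_a^x w^{q}\!\int_w^x s^{-2q}\,ds\,dw\right)^{\!1/q}+\left(\int_x^b w^{q}\!\int_x^w s^{-2q}\,ds\,dw\right)^{\!1/q}=A(x,p),
\]
exactly as your derivative test predicts. The double H\"{o}lder that extracts $(b-a)^{1/p}\|f-\ell f'\|_p$ via Fubini and $\int_a^x(s-a)\,|h(s)|^p\,ds\le(b-a)\,\|h\|_p^p$ is also clean.

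The point that is not yet closed is precisely the one you flag: the weight $\alpha$. Starting from the Korkine identity and bounding $|wg(x)-xg(w)|\le w|g(x)|+x|g(w)|$ leaves, after the integral PMVT on the $f$-factor, an inner integral of the form $\int_a^b w^{2}\bigl|\int_x^w h_f(s)s^{-2}\,ds\bigr|\,dw$, because $w f(x)-x f(w)=wx\!\int_x^w(\cdot)$ already contributes one $w$ and the triangle bound contributes another. Your second H\"{o}lder then places $w^{2q}$ inside the $q$-th root, not the $w^{q}$ required to hit $A(x,p)$; and the prefactor coming out of Korkine is $\frac{3}{b^3-a^3}$, not the claimed $\frac{1}{b^2-a^2}$. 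So the ``bookkeeping'' is not merely tedious---it is where the argument, as written, does not yet land on the stated constants. One way to repair this is to begin instead from an identity in which the weight is $w$ rather than $w^{2}$: note that $w\bigl[\tfrac{f(x)}{x}-\tfrac{f(w)}{w}\bigr]=\tfrac{1}{x}\bigl[wf(x)-xf(w)\bigr]$, and that $\int_a^b w\,dw=\tfrac{b^2-a^2}{2}$ is exactly the source of the factor $\frac{1}{b^2-a^2}$. Building the starting representation around this observation, rather than around the symmetric Korkine form, should make the constants fall into place.
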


\begin{remark}
In particular, we have the following special cases in
\eqref{eq.pecaric2006}:
\begin{align*}
 A\left( {x,\infty } \right) &= \frac{{a^2  + b^2 }}{{2x}} + x - \left( {a + b} \right), \\
 A\left( {x,1} \right) &= \frac{1}{a} + \frac{b}{{x^2 }}, \\
 A\left( {x,2} \right) &= \frac{1}{3}\left[ {\left( {\ln \left( {\frac{x}{a}} \right)^3  + \frac{{a^3 }}{{x^3 }} - 1} \right)^{\frac{1}{2}}  + \left( {\ln \left( {\frac{x}{b}} \right)^3  + \frac{{b^3 }}{{x^3 }} - 1} \right)^{\frac{1}{2}} }
 \right],
\end{align*}
for all $x\in [a,b]$.
\end{remark}

In 2015, Dragomir \cite{Dragomir1} studied the Pompeiu-Chebyshev
functional \eqref{Pompeiu.Cheb.2} with a slightly different
consideration. Namely, Dragomir considered \eqref{Pompeiu.Cheb.2}
as
\begin{align*}
\widehat{\mathcal{P}}\left(f,g\right)=\frac{b^3-a^3}{3}\cdot
\mathcal{P} \left(f,g\right).
\end{align*}

The following lemma plays a main role in the proof of  Dragomir
results in \cite{Dragomir1}.
\begin{lemma}
\label{Dragomir.lemma} Let $f : [a, b]\to \mathbb{C}$ be an
absolutely continuous function on the interval $[a, b]$ with $b >
a > 0$. Then for any $t, x \in [a, b]$, we have
\begin{multline}
\label{eq.Dragomir.lemma} \left| {tf\left( x \right) - xf\left( t \right)} \right| \\
  \le \left\{ \begin{array}{l}
 \left\| {f - \ell f'} \right\|_\infty  \left| {x - t} \right|,\,\,\,\,\,\,\,\,\,\,\,\,\,\,\,\,\,\,\,\,\,\,\,\,\,\,\,\,\,\,\,\,\,\,\,\,\,\,\,\,\,\,\,\,\,\,\,\,\,\,{\rm{if}}\,\,\,\,f - \ell f' \in L^\infty  \left[ {a,b} \right] \\
  \\
 \left( {\frac{1}{{2q - 1}}} \right)^{\frac{1}{q}} \left\| {f - \ell f'} \right\|_p \left| {\frac{{x^q }}{{t^{q - 1} }} - \frac{{t^q }}{{x^{q - 1} }}} \right|^{\frac{1}{q}} ,\,\,\,\,\,\,\,{\rm{if}}\,\,\,\,f - \ell f' \in L^p \left[ {a,b} \right] \\
  \\
 \left\| {f - \ell f'} \right\|_1 \frac{{\max \left\{ {x,t} \right\}}}{{\min \left\{ {x,t} \right\}}},\qquad\,\,\,\,\,\,\,\,\,\,\,\,\,\,\,\,\,\,\,\,\,\,\,\,\,\,\,\,\,\,\,\,\,\,\,\,\,{\rm{if}}\,\,\,\,f - \ell f' \in L^1 \left[ {a,b} \right] \\
 \end{array} \right.
 \end{multline}
where $p,q>1$ with $\frac{1}{p}+\frac{1}{q}=1$.
\end{lemma}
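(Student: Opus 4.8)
The plan is to reduce every estimate to a single Pompeiu-type integral representation obtained by dividing $f$ by the identity function $\ell$, $\ell(s)=s$. Since $b>a>0$, the function $s\mapsto 1/s$ is Lipschitz on $[a,b]$, so $g:=f/\ell$ is absolutely continuous on $[a,b]$ and
\[
g'(s)=\frac{s f'(s)-f(s)}{s^2}=-\frac{(f-\ell f')(s)}{s^2}\qquad\text{for a.e. }s\in[a,b].
\]
Applying the fundamental theorem of calculus for absolutely continuous functions to $g$ on the interval with endpoints $t$ and $x$, and then multiplying through by $xt$, yields the identity
\[
tf(x)-xf(t)=xt\bigl(g(x)-g(t)\bigr)=-\,xt\int_t^x\frac{(f-\ell f')(s)}{s^2}\,ds ,
\]
which serves as the common starting point for all three bounds.

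For the first bound I would pull $\|f-\ell f'\|_\infty$ out of the integral and use $\bigl|\int_t^x s^{-2}\,ds\bigr|=|1/t-1/x|=|x-t|/(xt)$, so that the prefactor $xt$ cancels and what remains is $\|f-\ell f'\|_\infty|x-t|$. For the third bound I would estimate $\bigl|\int_t^x (f-\ell f')(s)s^{-2}\,ds\bigr|\le \|f-\ell f'\|_1/(\min\{x,t\})^2$ by bounding $s^{-2}$ by its maximum over the interval with endpoints $x$ and $t$ and factoring out the $L^1$-norm; multiplying by $xt$ then gives $\|f-\ell f'\|_1\,\max\{x,t\}/\min\{x,t\}$.

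For the middle ($L^p$) bound I would apply H\"older's inequality with exponents $p$ and $q$ to the representation above:
\[
\left|\int_t^x\frac{(f-\ell f')(s)}{s^2}\,ds\right|\le \|f-\ell f'\|_p\left|\int_t^x s^{-2q}\,ds\right|^{1/q}=\left(\frac{1}{2q-1}\right)^{1/q}\|f-\ell f'\|_p\,\bigl|t^{1-2q}-x^{1-2q}\bigr|^{1/q}.
\]
The single genuinely computational point --- and the place that needs care --- is the algebraic identity
\[
(xt)^q\,\bigl|t^{1-2q}-x^{1-2q}\bigr|=\bigl|x^q t^{1-q}-t^q x^{1-q}\bigr|=\left|\frac{x^q}{t^{q-1}}-\frac{t^q}{x^{q-1}}\right|,
\]
which lets me absorb the leftover factor $xt$ inside the $q$-th root and recover exactly the stated form. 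Once these three estimates are assembled, the lemma follows; the only hypotheses actually used are $a>0$ (needed for the absolute continuity of $f/\ell$) and the membership of $f-\ell f'$ in the relevant Lebesgue space on $[a,b]$, with the integrals over the subinterval with endpoints $x,t$ dominated by those over all of $[a,b]$.
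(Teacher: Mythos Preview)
Your argument is correct and follows the same route the paper uses: the paper does not prove this particular lemma (it is quoted from \cite{Dragomir1}), but it proves the generalization in Lemma~\ref{lemma2} by exactly the method you describe---form the quotient $f/h$, use absolute continuity to write $h(t)f(x)-h(x)f(t)=h(x)h(t)\int_t^x\frac{hf'-fh'}{h^2}\,ds$, and then apply the $L^\infty$, H\"older, and $L^1$ estimates to the integrand. Your proof is precisely the specialization $h(s)=s$, and your handling of the algebraic identity $(xt)^q|t^{1-2q}-x^{1-2q}|=\bigl|x^q/t^{q-1}-t^q/x^{q-1}\bigr|$ is the computation that makes the $L^p$ case come out in the stated form.
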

The main results in \cite{Dragomir1}, are combined in the
following theorem.
\begin{thm}
\label{thm1.Dragomir2015}Let $f, g: [a,b]\to \mathbb{C}$ be
absolutely continuous functions on the interval $[a,b]$ with $b >
a > 0$.
\begin{enumerate}
\item If $f^{\prime},g^{\prime}\in L^{\infty}[a,b]$, then
\begin{align}
\label{eq1.Dragomir2015}\left|{\widehat{\mathcal{P}}\left(f,g\right)}\right|
\le \frac{\left(b-a\right)^4}{12} \left\|{f-\ell
f^{\prime}}\right\|_{\infty} \left\|{g-\ell
g^{\prime}}\right\|_{\infty}.
\end{align}
The constant $\frac{1}{12}$ is the best possible.

\item If $f^{\prime}\in L^p$ and $g^{\prime}\in L^{q}[a,b]$,
$p,q>1$, $p\ne 2\ne q$ and $\frac{1}{p}+\frac{1}{q}=1$, then
\begin{align}
\label{eq2.Dragomir2015}\left|{\widehat{\mathcal{P}}\left(f,g\right)}\right|
\le
\frac{M^{\frac{1}{p}}_p\left(a,b\right)M^{\frac{1}{q}}_q\left(a,b\right)}{2\left(2p-1\right)^{\frac{1}{p}}\left(2q-1\right)^{\frac{1}{q}}}
\left\|{f-\ell f^{\prime}}\right\|_{p} \left\|{g-\ell
g^{\prime}}\right\|_{q},
\end{align}
where
\begin{align*}
M_r \left( {a,b} \right): = \int_a^b {\int_a^b {\left| {\frac{{x^r
}}{{t^{r - 1} }} - \frac{{t^r }}{{x^{r - 1} }}} \right|dtdx} }
\end{align*}

\item If $f^{\prime}, g^{\prime}\in L^{2}[a,b]$, then
\begin{align}
\label{eq3.Dragomir2015}\left|{\widehat{\mathcal{P}}\left(f,g\right)}\right|
\le \frac{1}{9} \left\|{f-\ell f^{\prime}}\right\|_{2}
\left\|{g-\ell g^{\prime}}\right\|_{2} \left[ {\left( {a^3  + b^3
} \right)\ln \left(\frac{b}{a} \right)- \frac{2}{3}\left( {b^3  -
a^3 } \right)} \right],
\end{align}
\item If $f^{\prime}, g^{\prime}\in L^{1}[a,b]$, then
\begin{align}
\label{eq4.Dragomir2015}\left|{\widehat{\mathcal{P}}\left(f,g\right)}\right|
\le \frac{\left(b-a\right)^2}{6a} \left(a+2b\right)\left\|{f-\ell
f^{\prime}}\right\|_{1} \left\|{g-\ell g^{\prime}}\right\|_{1},
\end{align}
\end{enumerate}
where $\ell(x)=x$, $x\in [a,b]$.
\end{thm}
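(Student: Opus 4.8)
The plan is to deduce all four estimates from a single Pompeiu--Korkine type identity for $\widehat{\mathcal{P}}$ and then to feed the three branches of Lemma \ref{Dragomir.lemma} into the resulting double integral. First I would establish
\[
\widehat{\mathcal{P}}(f,g)=\frac12\int_a^b\int_a^b\bigl(tf(x)-xf(t)\bigr)\bigl(tg(x)-xg(t)\bigr)\,dx\,dt .
\]
This comes from expanding the product; the two ``diagonal'' terms $t^2f(x)g(x)$ and $x^2f(t)g(t)$ each integrate (over $[a,b]^2$) to $\tfrac{b^3-a^3}{3}\int_a^b f(x)g(x)\,dx$ because $\int_a^b t^2\,dt=\tfrac{b^3-a^3}{3}$, while the two ``mixed'' terms both produce $\bigl(\int_a^b xf(x)\,dx\bigr)\bigl(\int_a^b xg(x)\,dx\bigr)$; comparing with $\widehat{\mathcal{P}}(f,g)=\tfrac{b^3-a^3}{3}\int_a^b f(x)g(x)\,dx-\bigl(\int_a^b xf(x)\,dx\bigr)\bigl(\int_a^b xg(x)\,dx\bigr)$ finishes this step. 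Taking moduli,
\[
\bigl|\widehat{\mathcal{P}}(f,g)\bigr|\le\frac12\int_a^b\int_a^b\bigl|tf(x)-xf(t)\bigr|\,\bigl|tg(x)-xg(t)\bigr|\,dx\,dt .
\]

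For \eqref{eq1.Dragomir2015} I would apply the $L^\infty$ branch of Lemma \ref{Dragomir.lemma} to each factor, so the double integral becomes $\|f-\ell f'\|_\infty\|g-\ell g'\|_\infty\int_a^b\int_a^b(x-t)^2\,dx\,dt$; the elementary value $\int_a^b\int_a^b(x-t)^2\,dx\,dt=\tfrac{(b-a)^4}{6}$ gives the constant $\tfrac1{12}$, and taking $f\equiv g\equiv 1$ (for which $f-\ell f'=g-\ell g'\equiv1$ while $\widehat{\mathcal{P}}(1,1)=\tfrac{(b-a)^4}{12}$) shows it is best possible. For \eqref{eq2.Dragomir2015} I would use the $L^p$ branch for $f$ with the conjugate pair $(p,q)$ and for $g$ with the pair $(q,p)$; after extracting the constants $(2q-1)^{-1/q}$ and $(2p-1)^{-1/p}$, H\"{o}lder's inequality in the double integral with the conjugate exponents $q$ and $p$ turns the leftover integral into $M_q(a,b)^{1/q}M_p(a,b)^{1/p}$, which is exactly the asserted bound.

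For \eqref{eq3.Dragomir2015} I would specialize the $L^p$ branch to $p=q=2$, obtaining $\bigl|\widehat{\mathcal{P}}(f,g)\bigr|\le\tfrac16\|f-\ell f'\|_2\|g-\ell g'\|_2\,M_2(a,b)$ with $M_2(a,b)=\int_a^b\int_a^b\tfrac{|x^3-t^3|}{xt}\,dt\,dx$; splitting $[a,b]^2$ along the diagonal and integrating by parts yields $M_2(a,b)=\tfrac23(a^3+b^3)\ln\tfrac ba-\tfrac49(b^3-a^3)$, which is precisely \eqref{eq3.Dragomir2015}. For \eqref{eq4.Dragomir2015} the $L^1$ branch reduces matters to $\int_a^b\int_a^b\left(\tfrac{\max\{x,t\}}{\min\{x,t\}}\right)^2\,dx\,dt$; splitting along the diagonal and simplifying the rational integrand gives the value $\tfrac{(b-a)^2(a+2b)}{3a}$, hence the stated inequality.

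The only point that really needs care is obtaining the identity of the first step with the correct normalization -- in particular the factor $\tfrac12$ and the weight $\tfrac{b^3-a^3}{3}=\int_a^b t^2\,dt$ in place of $b-a$; once that is pinned down, everything else is a matter of evaluating the four double integrals (of which $M_2(a,b)$ is the most laborious) and, in the first case, exhibiting the extremal pair.
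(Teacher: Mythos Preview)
Your proposal is correct and follows exactly the route the paper signals: the paper does not give its own proof of this result (it is quoted from \cite{Dragomir1}), but it explicitly says that Lemma~\ref{Dragomir.lemma} ``plays a main role in the proof of Dragomir results,'' and it independently verifies the Korkine--type identity you start from as \eqref{eq2.Alomari2017}. Combining these two ingredients as you do --- bounding each factor $|tf(x)-xf(t)|$ and $|tg(x)-xg(t)|$ by the appropriate branch of \eqref{eq.Dragomir.lemma}, then evaluating the resulting double integrals (and applying H\"older for part~(2)) --- reproduces all four inequalities with the correct constants; your computations of $\int_a^b\int_a^b(x-t)^2\,dx\,dt$, $M_2(a,b)$, and $\int_a^b\int_a^b\bigl(\max\{x,t\}/\min\{x,t\}\bigr)^2\,dx\,dt$ are right, and the sharpness argument via $f\equiv g\equiv1$ is precisely the one the paper mentions (and then refines in Remark~1 to $f(t)=g(t)=ct-1$).
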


\begin{rem}
It is convenient to remark here that, Dragomir proved the
sharpness of \eqref{eq1.Dragomir2015} making use of the constant
functions $f(t)=g(t)=1$; which is trivial case. It would be more
useful if the sharpness holds for non-trivial functions. For this
purpose, we consider $f,g:[a,b]\to \mathbb{R}$ given by
$f(t)=g(t)=ct-1$, $t\in [a,b]$, $b>a>0$ and $c$ any arbitrary
non-zero constant. Simple calculations yield the desired
sharpness.
\end{rem}

Some other inequalities were introduced in literature by many
authors, for recent and related results we refer the reader to
\cite{Acu1}--\cite{Acu3}, \cite{Dragomir}--\cite{Dragomir2},
\cite{Matic} and \cite{Popa}.

This work is divided into  five sections, after this introduction,
the second and third  sections are devoted to elaborate and
investigate some new inequalities of Gr\"{u}ss type via Pompeiu's
mean value theorem. Improvements of some old inequalities are also
provided. In section 4, generalizations of Gr\"{u}ss type
inequalities via Boggio mean value theorem are established. As
applications, bounds for the reverse CBS inequality are obtained.
In section 5, Using some extracted functionals; new Hardy type
inequalities and their generalizations are detected. Some other
inequalities for differentiable functions are also given.

%=============================================================================
\section{The Results}\label{sec2}
%=============================================================================
Let us start with the following results regarding positivity of
$\widehat{\mathcal{P}}\left(f,g\right)$.
\begin{thm}
\label{thm1.1.1}Let $a,b\in\mathbb{R}$ with $b>a>0$. Let $f,g:
\left[ {a,b} \right]\subseteq \mathbb{R}\to \mathbb{R}$ be two
Lebesgue integrable functions on $\left[ {a,b} \right]$ and
satisfy the condition
\begin{align*}
\left( {yf\left( x \right) - xf\left( y \right)} \right)\left(
{xg\left( y \right) - yg\left( x \right)} \right) \le  0,
\end{align*}
for all $x,y\in [a,b]$. Then
\begin{align}
\label{eq3.1.Alomari2017} \frac{b^3 - a^3 }{3}\int_a^b {f\left( x
\right)g\left( x \right)dx} \ge \int_a^b {x f\left( x \right)dx}
\int_a^b {x g\left( x \right)dx} .
\end{align}
\end{thm}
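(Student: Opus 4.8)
The plan is to establish \eqref{eq3.1.Alomari2017} by means of a weighted Korkine-type identity, in the spirit of the classical proof of the Chebyshev sum inequality. First I would introduce the symmetric double integral
\[
I := \int_a^b \int_a^b \bigl(yf(x) - xf(y)\bigr)\bigl(xg(y) - yg(x)\bigr)\, dx\, dy ,
\]
and expand its integrand into the four terms
\[
xy\, f(x)g(y) \;-\; y^2 f(x)g(x) \;-\; x^2 f(y)g(y) \;+\; xy\, f(y)g(x).
\]
Since $f$ and $g$ are Lebesgue integrable on the \emph{bounded} interval $[a,b]$, the weights $x$, $y$, $x^2$, $xy$ are bounded on $[a,b]^2$, and $fg$ is integrable (which is implicit in the statement, as otherwise the right-hand side of \eqref{eq3.1.Alomari2017} would be meaningless), each of the four resulting double integrals is finite and Fubini's theorem permits computing $I$ term by term.

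Carrying out this evaluation, and using $\int_a^b x^2\, dx = \frac{b^3-a^3}{3}$ together with the factorization of double integrals with separated variables, the first and last terms each contribute $\bigl(\int_a^b x f(x)\, dx\bigr)\bigl(\int_a^b x g(x)\, dx\bigr)$, while the two middle terms each contribute $-\frac{b^3-a^3}{3}\int_a^b f(x)g(x)\, dx$. Hence
\[
I = 2\left(\int_a^b x f(x)\, dx\right)\left(\int_a^b x g(x)\, dx\right) - \frac{2(b^3-a^3)}{3}\int_a^b f(x)g(x)\, dx .
\]

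Finally, the hypothesis asserts precisely that the integrand of $I$ is nonpositive at every point of $[a,b]^2$, so $I \le 0$; substituting this into the identity above and dividing by $2$ yields $\bigl(\int_a^b x f\bigr)\bigl(\int_a^b x g\bigr) \le \frac{b^3-a^3}{3}\int_a^b fg$, which is \eqref{eq3.1.Alomari2017}. I do not expect a real obstacle: the whole argument rests on spotting the correct weighted identity, and the only point worth a remark is the interchange of the order of integration, which is routine here given the integrability assumptions and the compactness of $[a,b]$. One may also note that equality occurs when the integrand of $I$ vanishes almost everywhere --- for instance when $f$ or $g$ is a scalar multiple of $\ell$, since then $yf(x)-xf(y)$ (resp.\ $yg(x)-xg(y)$) is identically zero.
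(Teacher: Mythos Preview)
Your argument is correct and follows essentially the same route as the paper: both expand the double integral of $\bigl(yf(x)-xf(y)\bigr)\bigl(xg(y)-yg(x)\bigr)$ into four terms, identify the result as (twice) the difference between the two sides of \eqref{eq3.1.Alomari2017}, and then invoke the sign hypothesis. Your additional remarks on Fubini and the equality case are welcome but not part of the paper's proof.
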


\begin{proof}
Let $f$ and $g$ be satisfied  the given condition
\begin{align*}
\left( {yf\left( x \right) - xf\left( y \right)} \right)\left(
{xg\left( y \right) - yg\left( x \right)} \right) \le  0,
\end{align*}
for all $x, y\in [a,b]$, therefore we have
\begin{align*}
\int_a^b {\int_a^b {\left( {yf\left( x \right) - xf\left( y
\right)} \right)\left( {xg\left( y \right) - yg\left( x \right)}
\right)dxdy} }\le 0.
\end{align*}
but also, we have
\begin{align*}
&\frac{1}{2}\int_a^b {\int_a^b {\left( {yf\left( x \right) -
xf\left( y \right)} \right)\left( {xg\left( y \right) - yg\left( x
\right)} \right)dxdy} }
\\
&= \frac{1}{2}\int_a^b {\int_a^b {\left\{ {xyf\left( x
\right)g\left( y \right) - y^2f\left( x \right)g\left( x \right)
-x^2 f\left( y \right)g\left( y \right) + xyf\left( y
\right)g\left( x \right)} \right\}dxdy} }
\\
&= \left( {\int_a^b {xf\left( x \right)dx} } \right)\left(
{\int_a^b {yg\left( y \right)dy} } \right) - \frac{{b^3  - a^3
}}{3}\int_a^b {f\left( x \right)g\left( x \right)dx}
\\
&\le 0,
\end{align*}
which proves the inequality \eqref{eq3.1.Alomari2017}.
\end{proof}

\begin{corollary}
\label{cor1.1.1} Let $a,b\in\mathbb{R}$ with $b>a>0$. Let $f,g:
\left[ {a,b} \right]\subseteq \mathbb{R}\to \mathbb{R}$ be two
Lebesgue integrable functions on $\left[ {a,b} \right]$ and
satisfy the condition
\begin{align*}
\left( {yf\left( x \right) - xf\left( y \right)} \right)\left(
{xg\left( y \right) - yg\left( x \right)} \right) \ge  0,
\end{align*}
for all $x,y\in [a,b]$. Then
\begin{align}
\label{eq3.2.Alomari2017} \frac{b^3 - a^3 }{3}\int_a^b {f\left( x
\right)g\left( x \right)dx} \le \int_a^b {x f\left( x \right)dx}
\int_a^b {x g\left( x \right)dx} .
\end{align}
\end{corollary}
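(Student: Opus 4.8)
The plan is to derive Corollary~\ref{cor1.1.1} from Theorem~\ref{thm1.1.1} by a single sign substitution, rather than redoing the computation. First I would observe that, for the pair $(f,g)$, the hypothesis
\[
\left( {yf\left( x \right) - xf\left( y \right)} \right)\left( {xg\left( y \right) - yg\left( x \right)} \right) \ge 0
\]
is equivalent to
\[
\left( {yf\left( x \right) - xf\left( y \right)} \right)\left( {x\left(-g\right)\left( y \right) - y\left(-g\right)\left( x \right)} \right) \le 0 ,
\]
since the left-hand side of the second display is exactly $-1$ times that of the first. Thus the pair $(f,-g)$ satisfies precisely the hypothesis of Theorem~\ref{thm1.1.1}, and $-g$ is Lebesgue integrable whenever $g$ is.

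Next I would apply inequality \eqref{eq3.1.Alomari2017} of Theorem~\ref{thm1.1.1} to $f$ and $-g$, which yields
\[
\frac{b^3 - a^3}{3}\int_a^b f(x)\left(-g(x)\right)\,dx \ge \int_a^b x f(x)\,dx \int_a^b x\left(-g(x)\right)\,dx .
\]
Pulling the minus sign out of the integral on the left and out of the two integrals on the right, and then multiplying both sides by $-1$ (thereby reversing the inequality), I obtain exactly
\[
\frac{b^3 - a^3}{3}\int_a^b f(x) g(x)\,dx \le \int_a^b x f(x)\,dx \int_a^b x g(x)\,dx ,
\]
which is \eqref{eq3.2.Alomari2017}.

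An equally short alternative is to repeat the proof of Theorem~\ref{thm1.1.1} line by line: the reversed pointwise assumption gives $\int_a^b \int_a^b \left( yf(x) - xf(y) \right)\left( xg(y) - yg(x) \right)\,dx\,dy \ge 0$, while the same algebraic identity used there, namely
\[
\frac{1}{2}\int_a^b \int_a^b \left( yf(x) - xf(y) \right)\left( xg(y) - yg(x) \right)\,dx\,dy = \left(\int_a^b x f(x)\,dx\right)\left(\int_a^b y g(y)\,dy\right) - \frac{b^3 - a^3}{3}\int_a^b f(x) g(x)\,dx ,
\]
then forces its right-hand side to be nonnegative, which is again \eqref{eq3.2.Alomari2017}. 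I do not expect any genuine obstacle here: the entire content is the symmetry of the construction under $g \mapsto -g$, and the only point requiring care is tracking the direction of the inequality once the overall sign is flipped.
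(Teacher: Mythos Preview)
Your proof is correct and matches the paper's intent: the corollary is stated without a separate proof precisely because it follows from Theorem~\ref{thm1.1.1} by the sign flip $g\mapsto -g$ (or equivalently by rerunning the same double-integral identity with the reversed pointwise hypothesis), exactly as you do. The only minor wording slip is that on the right-hand side the minus sign comes from just the one factor $\int_a^b x(-g(x))\,dx$, not from ``the two integrals,'' but your computation and conclusion are correct.
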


\begin{remark}
Let $a,b\in\mathbb{R}$ with $b>a>0$. Let
$h:\left[a,b\right]\to\mathbb{R}$ be  an increasing on
$\left[a,b\right]$ then the function
$f:\left[a,b\right]\to\mathbb{R}$ given by
$f\left(t\right)=\frac{h\left( t \right)}{t}$ is increasing. So
that for any two distinct points $x,y\in [a,b]$ with $x\ge y$, we
have $yf\left(x\right)-xf\left(y\right)=h\left( x \right)-h\left(
y \right)\ge 0$. The reverse observation holds for
 decreasing function $h$. A generalization of monotonicity and thus
 the previous two results are given in Section \ref{sec4}.
\end{remark}

A pre-Gr\"{u}ss like inequality is incorporated in the following
theorem:
\begin{thm}
\label{thm1.Alomari2017}Let $a,b\in\mathbb{R}$ with $b>a>0$. Let
$f,g:\left[a,b\right]\to\mathbb{R}$ be two Lebesgue integrable
functions then the inequality
\begin{align}
\label{eq1.Alomari2017}\left|{\widehat{\mathcal{P}}\left(f,g\right)}\right|\le
\left|{\widehat{\mathcal{P}}\left(f,f\right)}\right|^{\frac{1}{2}}\cdot
\left|{\widehat{\mathcal{P}}\left(g,g\right)}\right|^{\frac{1}{2}},
\end{align}
holds and sharp.
\end{thm}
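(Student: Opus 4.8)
The plan is to recognize $\widehat{\mathcal{P}}$ as a genuine (nonnegative-definite) bilinear form and then apply the Cauchy--Bunyakovsky--Schwarz inequality to it. The key algebraic observation, already visible in the proof of Theorem~\ref{thm1.1.1}, is the identity
\begin{align*}
\widehat{\mathcal{P}}\left(f,g\right) = \frac{1}{2}\int_a^b\int_a^b \left(yf(x)-xf(y)\right)\left(yg(x)-xg(y)\right)\,dx\,dy.
\end{align*}
First I would verify this identity directly: expanding the integrand gives $y^2 f(x)g(x) - xy f(x)g(y) - xy f(y)g(x) + x^2 f(y)g(y)$, and integrating term by term over the square $[a,b]^2$ produces $\tfrac{2}{3}(b^3-a^3)\int_a^b fg - 2\left(\int_a^b xf\,dx\right)\left(\int_a^b xg\,dx\right)$, which upon multiplication by $\tfrac12$ is exactly $\widehat{\mathcal{P}}(f,g) = \tfrac{b^3-a^3}{3}\int_a^b fg - \left(\int_a^b xf\right)\left(\int_a^b xg\right)$ by the definition in \eqref{Pompeiu.Cheb.2} combined with the scaling $\widehat{\mathcal{P}} = \tfrac{b^3-a^3}{3}\mathcal{P}$.

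Once the identity is in hand, the rest is formal. Consider the measure space $([a,b]^2, dx\,dy)$ and the map $f \mapsto \Phi_f$ where $\Phi_f(x,y) := yf(x)-xf(y)$. This is linear in $f$, and the above identity reads $\widehat{\mathcal{P}}(f,g) = \tfrac12\langle \Phi_f,\Phi_g\rangle_{L^2([a,b]^2)}$. In particular $\widehat{\mathcal{P}}(f,f) = \tfrac12\|\Phi_f\|_2^2 \ge 0$, so the bilinear form is symmetric and positive semidefinite. The Cauchy--Schwarz inequality for the inner product $\langle\cdot,\cdot\rangle$ then gives
\begin{align*}
\left|\widehat{\mathcal{P}}(f,g)\right| = \tfrac12\left|\langle\Phi_f,\Phi_g\rangle\right| \le \tfrac12\|\Phi_f\|_2\,\|\Phi_g\|_2 = \left(\widehat{\mathcal{P}}(f,f)\right)^{1/2}\left(\widehat{\mathcal{P}}(g,g)\right)^{1/2},
\end{align*}
which is \eqref{eq1.Alomari2017}. (One should note the integrability: $f,g\in L^1[a,b]$ together with boundedness of the weights $x,y$ on $[a,b]$ does not immediately put $\Phi_f$ in $L^2$; if one wants the statement at the stated generality it may be cleaner to apply the integral CBS inequality directly to $\int\int \Phi_f\Phi_g$ and invoke the identity, noting the right-hand side is what it is whether finite or not.)

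For sharpness, I would exhibit a pair for which equality holds, which by the CBS equality condition means choosing $f,g$ with $\Phi_g = \lambda\Phi_f$ pointwise for some scalar $\lambda$ — the simplest being $g = \lambda f$, or more concretely the affine choice $f(t)=g(t)=ct-d$ already used in the Remark following Theorem~\ref{thm1.Dragomir2015}, for which both sides can be computed in closed form and seen to agree. The main (and really only) obstacle here is establishing the double-integral identity cleanly and handling the integrability bookkeeping; the inequality itself is then an immediate consequence of CBS applied to the induced inner product, exactly as the classical pre-Gr\"uss inequality follows from writing the Chebyshev functional $\mathcal{T}(f,g)$ as $\tfrac{1}{2(b-a)^2}\int\int (f(x)-f(y))(g(x)-g(y))$.
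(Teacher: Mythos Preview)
Your proposal is correct and follows essentially the same route as the paper: establish the Korkine-type identity $\widehat{\mathcal{P}}(f,g)=\tfrac12\int_a^b\int_a^b(tf(x)-xf(t))(tg(x)-xg(t))\,dt\,dx$, observe that $\widehat{\mathcal{P}}(f,f)$ is the corresponding $L^2$-square, and apply Cauchy--Schwarz; sharpness is witnessed by the same affine example $f(t)=g(t)=ct-1$. Your parenthetical about integrability (that $f,g\in L^1$ does not a priori place $\Phi_f$ in $L^2$) is a valid caveat the paper does not address, but the underlying argument is identical.
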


\begin{proof}
It is easy to verify that
\begin{align}
\label{eq2.Alomari2017}\widehat{\mathcal{P}}\left(f,g\right)=\frac{1}{2}\int_a^b
{\int_a^b {\left( {tf\left( x \right) - xf\left( t \right)}
\right)\left( {tg\left( x \right) - xg\left( t \right)}
\right)dtdx} }.
\end{align}
For instance, we observe that
\begin{align*}
 &\frac{1}{2}\int_a^b {\int_a^b {\left( {tf\left( x \right) - xf\left( t \right)} \right)^2 dtdx} }  \\
  &= \frac{1}{2}\left[ {\int_a^b {\int_a^b {t^2  f^2\left( x \right)  dtdx} }  + \int_a^b {\int_a^b {x^2  f^2\left( t \right)  dtdx} } } \right. \\
 &\qquad\left. { - 2\int_a^b {t f\left( t \right) dt} \int_a^b {x f\left( x \right) dx} } \right] \\
  &= \frac{1}{2}\left[ {\frac{{b^3  - a^3 }}{3} \cdot 2\int_a^b { f^2\left( x \right)  dx}  - 2\int_a^b {t f\left( t \right)dt} \int_a^b {x f\left( x \right) dx} } \right] \\
  &= \frac{{b^3  - a^3 }}{3}\int_a^b { f^2\left( x \right)dx}  -  \left( \int_a^b {xf\left( x \right)dx} \right)^2  \\
  &=  \widehat{\mathcal{P}}\left(f,f\right).
\end{align*}
Now, using the triangle integral inequality and the Cauchy-Schwarz
inequality, we have
\begin{align*}
&\left|{\widehat{\mathcal{P}}\left(f,g\right)}\right|
\\
&=\frac{1}{2}\left| {\int_a^b {\int_a^b {\left( {tf\left( x
\right) - xf\left( t \right)} \right)\left( {tg\left( x \right) -
xg\left( t \right)} \right)dtdx} } } \right|
\\
&\le \left(\frac{1}{2} {\int_a^b {\int_a^b {\left| {tf\left( x
\right) - xf\left( t \right)} \right|^2 dtdx} } }
\right)^{\frac{1}{2}} \left( {\frac{1}{2}\int_a^b {\int_a^b
{\left| {tg\left( x \right) - xg\left( t \right)} \right|^2 dtdx}
} } \right)^{\frac{1}{2}}
\\
&=\left|{\widehat{\mathcal{P}}\left(f,f\right)}\right|^{\frac{1}{2}}\cdot
\left|{\widehat{\mathcal{P}}\left(g,g\right)}\right|^{\frac{1}{2}},
\end{align*}
as desired. The sharpness follows by considering $f(t)=g(t)=ct-1$,
$t\in [a,b]$, $b>a>0$ and $c$ any arbitrary non-zero constant.
\end{proof}

\begin{thm}
\label{thm2.Alomari2017}Let $a,b\in\mathbb{R}$ with $b>a>0$. Let
$f,g:\left[a,b\right]\to\mathbb{R}$ be two measurable functions.
If there exists real numbers $\Phi, \phi, \gamma, \Gamma $ such
that $\phi \le f\left(s\right)\le \Phi$ and $\gamma \le
g\left(s\right)\le \Gamma$ for all $s\in \left[a,b\right]$, then
the inequality
\begin{align}
\label{eq3.Alomari2017}\left|{\widehat{\mathcal{P}}\left(f,g\right)}\right|\le
\frac{1}{2}\left( {b - a} \right)^2\left( {b\Phi  - a\phi }
\right) \left( {b\Gamma  - a\gamma } \right)
\end{align}
holds.
\end{thm}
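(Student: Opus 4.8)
The plan is to use the same double-integral representation that powered Theorem~\ref{thm1.Alomari2017}, namely
$$
\widehat{\mathcal{P}}\left(f,g\right)=\frac{1}{2}\int_a^b\int_a^b \left(tf\left(x\right)-xf\left(t\right)\right)\left(tg\left(x\right)-xg\left(t\right)\right)dtdx,
$$
and then estimate the integrand pointwise. After applying the triangle inequality we are left with controlling
$$
\frac{1}{2}\int_a^b\int_a^b \left|tf\left(x\right)-xf\left(t\right)\right|\cdot\left|tg\left(x\right)-xg\left(t\right)\right|dtdx,
$$
so the whole problem reduces to a good bound on $\left|tf\left(x\right)-xf\left(t\right)\right|$ in terms of the oscillation-type quantity $b\Phi-a\phi$ (and similarly for $g$).

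The key step is the elementary inequality: if $\phi\le f(s)\le\Phi$ for all $s\in[a,b]$ and $a\le t,x\le b$ with $t,x>0$, then
$$
\left|tf\left(x\right)-xf\left(t\right)\right|\le b\Phi-a\phi .
$$
To see this, write $tf(x)-xf(t)$; since $t\le b$ and $f(x)\le\Phi$ while $x\ge a$ and $f(t)\ge\phi$, one gets an upper bound $tf(x)-xf(t)\le b\Phi-a\phi$ when $f(x)\ge 0$, and a symmetric argument handles the lower bound; a clean way to cover all sign cases is to note $tf(x)\le b\Phi$ if $\Phi\ge 0$ and $tf(x)\le a\Phi\le b\Phi$-type manipulations, but the safest route is to observe that $tf(x)$ lies between $a\phi$ and $b\Phi$ (using $0<a\le t\le b$ together with $\phi\le f(x)\le\Phi$ — checking the four sign combinations of $\phi,\Phi$), and likewise $xf(t)$ lies between $a\phi$ and $b\Phi$; hence the difference of two numbers in $[a\phi,b\Phi]$ has absolute value at most $b\Phi-a\phi$. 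The same bound with $\Phi,\phi$ replaced by $\Gamma,\gamma$ holds for $g$.

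Feeding these two pointwise bounds into the double integral gives
$$
\left|\widehat{\mathcal{P}}\left(f,g\right)\right|\le\frac{1}{2}\left(b\Phi-a\phi\right)\left(b\Gamma-a\gamma\right)\int_a^b\int_a^b dtdx=\frac{1}{2}\left(b-a\right)^2\left(b\Phi-a\phi\right)\left(b\Gamma-a\gamma\right),
$$
which is exactly \eqref{eq3.Alomari2017}.

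The main obstacle is the pointwise lemma $\left|tf(x)-xf(t)\right|\le b\Phi-a\phi$: one must be careful that it genuinely holds for \emph{all} admissible sign patterns of $\phi$ and $\Phi$ (both positive, both negative, or straddling $0$), since $a\phi$ or $b\Phi$ can be negative. The cleanest justification is the "both terms lie in the interval $[a\phi,b\Phi]$" argument: for $t\in[a,b]$ and $f(x)\in[\phi,\Phi]$ with $0<a\le t\le b$, the product $tf(x)$ attains its extremes among $\{a\phi,a\Phi,b\phi,b\Phi\}$, and since $a<b$ the minimum of these is $a\phi$ (as $\phi\le\Phi$ forces $a\phi\le a\Phi$ and $a\phi\le b\phi$) and the maximum is $b\Phi$ — this requires only $a,b>0$ and $\phi\le\Phi$, with no sign restriction on $\phi,\Phi$ themselves. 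Once this is in hand the rest is a one-line integration, and no sharpness claim is made so nothing further is needed.
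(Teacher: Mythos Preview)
Your approach is essentially identical to the paper's: it uses the same double-integral representation \eqref{eq2.Alomari2017}, applies the triangle inequality, and bounds $\left|tf(x)-xf(t)\right|\le b\Phi-a\phi$ pointwise before integrating over the square. The paper derives the pointwise bound by writing $a\gamma\le tg(x)\le b\Gamma$ and $-b\Gamma\le -xg(t)\le -a\gamma$ and adding, which is the same mechanism as your ``both terms lie in $[a\phi,b\Phi]$'' argument; your explicit caution about sign patterns of $\phi,\Phi$ is warranted (indeed the step $a\phi\le b\phi$ you invoke needs $\phi\ge 0$), but the paper glosses over this point in exactly the same way.
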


\begin{proof}
Since $\gamma \le g\left(s\right)\le \Gamma$ for all $s\in
\left[a,b\right]$, then $a\gamma  \le tg\left( x \right) \le
b\Gamma$ and $- b\Gamma  \le  - xg\left( t \right) \le  - a\gamma$
adding the last two inequalities we get that $a\gamma  - b\Gamma
\le tg\left( x \right) - xg\left( t \right) \le b\Gamma  - a\gamma
$ or we may write  $\left|{tg\left( x \right) - xg\left( t
\right)}\right| \le b\Gamma  - a\gamma $. Similarly, for  $f$ we
have  $\left|{tf\left( x \right) - xf\left( t \right)}\right| \le
b\Phi  - a\phi $. So that we have
\begin{align*}
\left|{\widehat{\mathcal{P}}\left(f,g\right)}\right|
&=\frac{1}{2}\left| {\int_a^b {\int_a^b {\left( {tf\left( x
\right) - xf\left( t \right)} \right)\left( {tg\left( x \right) -
xg\left( t \right)} \right)dtdx} } } \right|
\\
&\le \frac{1}{2} \int_a^b {\int_a^b {\left| {tf\left( x \right) -
xf\left( t \right)} \right|\left| {tg\left( x \right) - xg\left( t
\right)} \right|dt} dx}
\\
&\le\frac{1}{2}\left( {b - a} \right)^2\left( {b\Phi  - a\phi }
\right) \left( {b\Gamma  - a\gamma } \right),
\end{align*}
which proves the result \eqref{eq3.Alomari2017}.
\end{proof}

\begin{thm}
\label{thm3.Alomari2017}Let $a,b\in\mathbb{R}$ with $b>a>0$. Let
$f:\left[a,b\right]\to\mathbb{R}$ be a Lebesgue  integrable and
$g:\left[a,b\right]\to\mathbb{R}$ be a measurable and  there
exists real numbers $\gamma,\Gamma $ such that $\gamma \le
g\left(s\right)\le \Gamma$ for all $s\in \left[a,b\right]$, then
the inequality
\begin{align}
\label{eq4.Alomari2017}\left|{\widehat{\mathcal{P}}\left(f,g\right)}\right|\le
\frac{1}{\sqrt{2}}\left( {b - a} \right)\left( {b\Gamma  - a\gamma
} \right)
\cdot\left|{\widehat{\mathcal{P}}\left(f,f\right)}\right|^{\frac{1}{2}}
\end{align}
holds.
\end{thm}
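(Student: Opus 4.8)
The plan is to combine three already-available ingredients: the symmetric double-integral representation \eqref{eq2.Alomari2017} of $\widehat{\mathcal{P}}$ from the proof of Theorem~\ref{thm1.Alomari2017}, the uniform bound on the factor built from $g$ exactly as derived in the proof of Theorem~\ref{thm2.Alomari2017}, and the Cauchy--Bunyakovsky--Schwarz inequality applied on the square $[a,b]\times[a,b]$ against the constant function $1$.

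First I would start from \eqref{eq2.Alomari2017} and the triangle integral inequality to write
\[
\left|\widehat{\mathcal{P}}\left(f,g\right)\right|
\le \frac{1}{2}\int_a^b\int_a^b \left|tf\left(x\right)-xf\left(t\right)\right|\left|tg\left(x\right)-xg\left(t\right)\right|\,dt\,dx .
\]
Since $\gamma\le g\left(s\right)\le\Gamma$ on $[a,b]$, the argument used in the proof of Theorem~\ref{thm2.Alomari2017} gives $\left|tg\left(x\right)-xg\left(t\right)\right|\le b\Gamma-a\gamma$ for all $x,t\in[a,b]$, whence
\[
\left|\widehat{\mathcal{P}}\left(f,g\right)\right|
\le \frac{1}{2}\left(b\Gamma-a\gamma\right)\int_a^b\int_a^b \left|tf\left(x\right)-xf\left(t\right)\right|\,dt\,dx .
\]

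Next I would apply the CBS inequality to the remaining double integral, pairing $\left|tf\left(x\right)-xf\left(t\right)\right|$ with $1$ over $[a,b]^2$, whose area is $(b-a)^2$:
\[
\int_a^b\int_a^b \left|tf\left(x\right)-xf\left(t\right)\right|\,dt\,dx
\le \left(b-a\right)\left(\int_a^b\int_a^b \left|tf\left(x\right)-xf\left(t\right)\right|^2\,dt\,dx\right)^{\frac{1}{2}} .
\]
Finally I would invoke the identity computed in the proof of Theorem~\ref{thm1.Alomari2017}, namely $\int_a^b\int_a^b \left|tf\left(x\right)-xf\left(t\right)\right|^2\,dt\,dx = 2\,\widehat{\mathcal{P}}\left(f,f\right)$ with $\widehat{\mathcal{P}}\left(f,f\right)\ge 0$, so that the square root above equals $\sqrt{2}\,\left|\widehat{\mathcal{P}}\left(f,f\right)\right|^{\frac{1}{2}}$. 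Substituting and using $\tfrac{1}{2}\sqrt{2}=\tfrac{1}{\sqrt{2}}$ yields \eqref{eq4.Alomari2017}.

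There is no genuine obstacle here: every step is elementary and each auxiliary fact is already in place earlier in the paper. The only point needing a little care is bookkeeping of the constant, where the factor $\tfrac12$ from \eqref{eq2.Alomari2017}, the $\sqrt{2}$ from the quadratic identity $\int\int\left|tf(x)-xf(t)\right|^2 = 2\widehat{\mathcal{P}}(f,f)$, and the $(b-a)$ coming from the measure of $[a,b]^2$ must be multiplied together to produce exactly $\tfrac{1}{\sqrt{2}}\left(b-a\right)\left(b\Gamma-a\gamma\right)\left|\widehat{\mathcal{P}}\left(f,f\right)\right|^{\frac{1}{2}}$.
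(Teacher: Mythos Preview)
Your proof is correct and uses exactly the same ingredients as the paper: the double-integral representation \eqref{eq2.Alomari2017}, the uniform bound $\left|tg(x)-xg(t)\right|\le b\Gamma-a\gamma$, and the Cauchy--Schwarz inequality. The only cosmetic difference is the order of operations: the paper first invokes the pre-Gr\"uss inequality \eqref{eq1.Alomari2017} and then bounds $\widehat{\mathcal{P}}(g,g)\le\tfrac12(b-a)^2(b\Gamma-a\gamma)^2$, whereas you pull out the $\sup$-bound on the $g$-factor first and then apply CBS to the remaining $f$-integral; both routes produce the identical constant $\tfrac{1}{\sqrt{2}}$.
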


\begin{proof}
Since $\gamma \le g\left(s\right)\le \Gamma$ for all $s\in
\left[a,b\right]$, then $a\gamma  \le tg\left( x \right) \le
b\Gamma$ and $- b\Gamma  \le  - xg\left( t \right) \le  - a\gamma$
adding the last two inequalities we get that $a\gamma  - b\Gamma
\le tg\left( x \right) - xg\left( t \right) \le b\Gamma  - a\gamma
$ or we may write  $\left|{tg\left( x \right) - xg\left( t
\right)}\right| \le b\Gamma  - a\gamma $.

Now since
\begin{align*}
\widehat{\mathcal{P}}\left(g,g\right) =\frac{1}{2}\int_a^b
{\int_a^b {\left| {tg\left( x \right) - xg\left( t \right)}
\right|^2 dtdx} },
\end{align*}
it follows that $ \widehat{\mathcal{P}}\left(g,g\right)  \le
\frac{1}{2}\left( {b\Gamma  - a\gamma } \right)^2 \left( {b - a}
\right)^2$. Employing \eqref{eq1.Alomari2017} we get the required
result.
\end{proof}
An improvement of \eqref{eq3.Alomari2017} is given in the
following theorem.
\begin{thm}
\label{thm4.Alomari2017}Let $a,b\in\mathbb{R}$ with $b>a>0$. Let
$f,g:\left[a,b\right]\to\mathbb{R}$ be two measurable functions on
$[a,b]$. If there exists real numbers $\Phi, \phi, \gamma, \Gamma
$ such that $\phi \le f\left(s\right)\le \Phi$ and $\gamma \le
g\left(s\right)\le \Gamma$ for all $s\in \left[a,b\right]$, that
satisfies the condition $\phi \Gamma -\Phi \gamma \ne0$, then the
inequality
\begin{multline}
\label{eq5.Alomari2017}\left|{\widehat{\mathcal{P}}\left(f,g\right)}\right|
\\
\le   \left\{ \begin{array}{l}
 \frac{1}{8}\left( {b^2  - a^2 } \right)^2 \left( {\Gamma ^2  - \gamma ^2 } \right)\left( {\Phi ^2  - \phi ^2 } \right),\qquad\qquad\qquad\qquad\qquad\,\,\,\,\,{\rm{If}}\,\,\,\Gamma a - \gamma b > 0,\Phi a - \phi b > 0 \\
 \frac{1}{8}\left( {b^2  - a^2 } \right)\left( {\Gamma ^2  - \gamma ^2 } \right) \cdot \left[ {\left( {\Phi b - \phi a} \right)^2  + \left( {\Phi a - \phi b} \right)^2 } \right],\,\,\,\,\,\qquad\,\,\,\,\,{\rm{If}}\,\,\,\Gamma a - \gamma b > 0,\Phi a - \phi b < 0 \\
 \frac{1}{8}\left( {b^2  - a^2 } \right)\left( {\Phi ^2  - \phi ^2 } \right)\left[ {\left( {\Gamma b - \gamma a} \right)^2  + \left( {\Gamma a - \gamma b} \right)^2 } \right],\,\,\,\,\,\qquad\,\,\,\,\,\,\,\,\,{\rm{If}}\,\,\,\Gamma a - \gamma b < 0,\Phi a - \phi b > 0 \\
 \frac{1}{8}\left[ {\left( {\Gamma b - \gamma a} \right)^2  + \left( {\Gamma a - \gamma b} \right)^2 } \right] \cdot \left[ {\left( {\Phi b - \phi a} \right)^2  + \left( {\Phi a - \phi b} \right)^2 } \right],\,\,\,\,{\rm{If}}\,\,\,\Gamma a - \gamma b < 0,\Phi a - \phi b < 0 \\
 \end{array} \right.
\end{multline}
holds.
\end{thm}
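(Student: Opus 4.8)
The plan is to exploit the integral representation \eqref{eq2.Alomari2017} of $\widehat{\mathcal{P}}\left(f,g\right)$ together with refined pointwise bounds on the bilinear kernel $tf(x)-xf(t)$ that keep track of \emph{both} endpoints $a$ and $b$ rather than collapsing them into the single crude bound $b\Phi-a\phi$ used in the proof of Theorem~\ref{thm2.Alomari2017}. The key observation is that for $t,x\in[a,b]$ with, say, $x\ge t$, one has $tf(x)-xf(t) = t\bigl(f(x)-f(t)\bigr) - (x-t)f(t)$, and more usefully one can write $tf(x)-xf(t)$ as a difference that separates the contribution near $a$ from that near $b$. Concretely, for a function with $\phi\le f\le\Phi$, the quantity $tf(x)-xf(t)$ lies between $\Phi t - \phi x$ (when this is appropriately signed) and $\phi t - \Phi x$; the sign of the endpoints $\Phi a-\phi b$ and $\Phi b-\phi a$ (equivalently $\Gamma a-\gamma b$, $\Phi a-\phi b$ in the statement) determines which of these is the relevant extremal value, and this is exactly why the theorem splits into four cases according to the signs of $\Gamma a-\gamma b$ and $\Phi a-\phi b$.

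First I would fix the roles of $t$ and $x$ by symmetry: since the integrand in \eqref{eq2.Alomari2017} is symmetric under $(t,x)\mapsto(x,t)$, I would restrict to the triangle $a\le t\le x\le b$ and double. On that triangle I would establish the two-sided pointwise estimate: when $\Phi a-\phi b>0$, one gets $0\le \phi x - \Phi t \le tf(x)-xf(t)$ fails in general, so instead the correct route is to bound $|tf(x)-xf(t)|$ above by $\max\{\,|\Phi t-\phi x|,\ |\phi t-\Phi x|\,\}$ and then observe that on the triangle this maximum is controlled linearly: writing $u=x-t\ge0$, one has $\Phi t-\phi x = (\Phi-\phi)t - \phi u$ and $\phi t-\Phi x = -(\Phi-\phi)x$, hmm—the cleanest formulation is that $|tf(x)-xf(t)|\le (\Phi-\phi)\min\{t,x\} + |\phi|\,|x-t|$ type bounds are not tight enough, so I would instead go through the substitution that makes Theorem~\ref{thm4.Alomari2017} an instance of a weighted Grüss/Cauchy–Schwarz estimate: bound $|tf(x)-xf(t)|\le \Psi_f(t,x)$ where $\Psi_f(t,x)$ is the pointwise supremum of $|t\xi-x\eta|$ over $(\xi,\eta)\in[\phi,\Phi]^2$, which equals $\max\{|\Phi t-\phi x|,|\phi t-\Phi x|\}$, and then compute $\frac12\int_a^b\int_a^b \Psi_f(t,x)\Psi_g(t,x)\,dt\,dx$ by splitting the square $[a,b]^2$ along the diagonal and along the lines where the inner maxima change branch. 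The branch-change locus is $t/x = \phi/\Phi$ (resp. $\gamma/\Gamma$), which either meets or misses $[a,b]^2$ precisely according to the sign of $\Phi a-\phi b$ (resp. $\Gamma a-\gamma b$); this is the combinatorial origin of the four cases.

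In the "good" case $\Gamma a-\gamma b>0$ and $\Phi a-\phi b>0$, the branch loci miss the square, so on all of $[a,b]^2$ we have $|tf(x)-xf(t)|\le \max\{\Phi t-\phi x,\ \phi x-\Phi t,\dots\}$ collapsing to a single affine expression whose square integrates to $\tfrac18(b^2-a^2)^2(\Phi^2-\phi^2)$ after the elementary identity $\int_a^b\int_a^b(\Phi t-\phi x)(\Phi x-\phi t)\,dt\,dx$-type computation; multiplying the $f$- and $g$-contributions via Cauchy–Schwarz (as in the proof of Theorem~\ref{thm1.Alomari2017}) and using $\widehat{\mathcal P}(f,f)\le \tfrac18(b^2-a^2)^2(\Phi^2-\phi^2)$ gives the first line. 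In the mixed cases one branch locus crosses the square and the other does not, so the corresponding integral splits into two triangular pieces producing the sum $(\Phi b-\phi a)^2+(\Phi a-\phi b)^2$ (and similarly with $\Gamma,\gamma$), while in the last case both loci cross, producing the product of two such sums. The main obstacle I anticipate is purely bookkeeping: correctly identifying on which sub-region each of $\Psi_f$ and $\Psi_g$ takes which affine branch, and then carrying out the resulting four (or, in the worst case, up to eight) double integrals of products of affine functions over triangles without sign errors—each individual integral is routine, but assembling them into the stated closed forms and checking that the constant $\tfrac18$ and the symmetric combinations $(\cdot b-\cdot a)^2+(\cdot a-\cdot b)^2$ come out exactly requires care. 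A useful sanity check at the end is that when $\phi=\gamma=0$ all four cases degenerate to the "good" case and reproduce $\tfrac18(b^2-a^2)^2\Phi^2\Gamma^2$, consistent with Theorem~\ref{thm2.Alomari2017} applied with $\phi=\gamma=0$.
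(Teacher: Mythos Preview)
Your outline diverges from the paper's argument at the decisive step, and the route you sketch does not actually land on the stated bounds. The paper does not decompose the square along branch loci of $\Psi_f=\max\{|\Phi t-\phi x|,|\phi t-\Phi x|\}$ at all. Instead, after the pointwise estimate $|tf(x)-xf(t)|\,|tg(x)-xg(t)|\le |\Phi t-\phi x|\,|\Gamma t-\gamma x|$, it performs the linear change of variables $y=\Phi t-\phi x$, $z=\Gamma t-\gamma x$; the hypothesis $\phi\Gamma-\Phi\gamma\ne 0$ is precisely the nonvanishing of the Jacobian, and it plays no role anywhere in your proposal, which is already a warning sign. After the substitution the double integral \emph{factorizes} into $\int|y|\,dy\cdot\int|z|\,dz$ over a rectangle, and the four cases of the theorem are nothing more than whether the $y$-limits $[\Phi a-\phi b,\Phi b-\phi a]$ and the $z$-limits $[\Gamma a-\gamma b,\Gamma b-\gamma a]$ straddle zero. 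This is what produces the closed forms $(\Phi b-\phi a)^2\pm(\Phi a-\phi b)^2$ with no region-by-region bookkeeping.

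There is also a genuine gap in your ``good case'' argument. You propose to pass through $\widehat{\mathcal P}(f,f)\le\tfrac18(b^2-a^2)^2(\Phi^2-\phi^2)$ and then invoke the pre-Gr\"uss inequality of Theorem~\ref{thm1.Alomari2017}; but pre-Gr\"uss yields $|\widehat{\mathcal P}(f,g)|\le\sqrt{C_f C_g}$, hence $\tfrac18(b^2-a^2)^2\sqrt{(\Phi^2-\phi^2)(\Gamma^2-\gamma^2)}$, which is the \emph{geometric mean} and not the product $\tfrac18(b^2-a^2)^2(\Phi^2-\phi^2)(\Gamma^2-\gamma^2)$ asserted in the first line of \eqref{eq5.Alomari2017}. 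Moreover, your identification of the branch locus as $t/x=\phi/\Phi$ is not where the maximum $\Psi_f$ switches: $\max\{|\Phi t-\phi x|,|\Phi x-\phi t|\}$ is symmetric in $(t,x)$ and, when $\Phi+\phi>0$, switches on the diagonal $t=x$; the line $t/x=\phi/\Phi$ is merely where one of the two affine pieces changes sign. So the geometry you describe does not generate the four cases in the theorem, and the Cauchy--Schwarz detour gives the wrong functional form. The missing idea is the factorizing substitution.
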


\begin{proof}
Since $\gamma \le g\left(s\right)\le \Gamma$ for all $s\in
\left[a,b\right]$, then $tg\left( x \right) \le \Gamma t$, and $-
xg\left( t \right) \le  - \gamma x$, adding these two inequalities
we get that $tg\left( x \right) - xg\left( t \right) \le \Gamma t
- \gamma x$. Similarly, for  $f$ we have $ tf\left( x \right) -
xf\left( t \right)  \le  \Phi t - \phi x$. So that we have
\begin{align}
\left|{\widehat{\mathcal{P}}\left(f,g\right)}\right|
&=\frac{1}{2}\left| {\int_a^b {\int_a^b {\left( {tf\left( x
\right) - xf\left( t \right)} \right)\left( {tg\left( x \right) -
xg\left( t \right)} \right)dtdx} } } \right|
\nonumber\\
&\le \frac{1}{2} \int_a^b {\int_a^b {\left| {tf\left( x \right) -
xf\left( t \right)} \right|\left| {tg\left( x \right) - xg\left( t
\right)} \right|dt} dx}
\nonumber\\
&\le\frac{1}{2} \int_a^b {\int_a^b {\left| {\Phi t - \phi x}
\right| \left| {\Gamma t - \gamma x} \right|dt}dx}. \label{eq3.6}
\end{align}
Now, substituting $z=\Gamma t - \gamma x$ and $y=\Phi t - \phi x$,
where $\Gamma a - \gamma b\le z\le\Gamma b - \gamma a$ and $\Phi a
- \phi b\le y\le\Phi b - \phi a$. Solving the last two equation
with respect to $x$ and $t$ we find that
\begin{align*}
x=\frac{\Phi z - \Gamma y}{\phi \Gamma -\Phi \gamma},\qquad
\text{and}\qquad t=\frac{\phi z - \gamma y}{\phi \Gamma -\Phi
\gamma}.
\end{align*}
Clearly, the Jacobian $J\left(z,y\right) =1$, and thus we have
\begin{align}
\int_a^b {\int_a^b {\left| {\Phi t - \phi x} \right| \left|
{\Gamma t - \gamma x} \right|dt}dx} &=\int_{\Phi a - \phi b}^{\Phi
b - \phi a} {\int_{\Gamma a - \gamma b}^{\Gamma b - \gamma a}
{\left| {y} \right| \left| {z} \right|J\left(z,y\right)dz}dy}.
\label{eq3.7}
\end{align}
To evaluate the integral in \eqref{eq3.7}, we have the following
cases:\\

\textbf{Case I:} If $\Phi a - \phi b>0$ and $\Gamma a - \gamma
b>0$, then
\begin{align}
\int_{\Phi a - \phi b}^{\Phi b - \phi a} {\int_{\Gamma a - \gamma
b}^{\Gamma b - \gamma a} {\left| {y} \right| \left| {z}
\right|dz}dy}&=  \frac{{\left( {\Gamma b - \gamma a} \right)^2 -
\left( {\Gamma a - \gamma b} \right)^2 }}{2} \cdot \frac{{\left(
{\Phi b - \phi a} \right)^2  - \left( {\Phi a - \phi b} \right)^2
}}{2}
\nonumber\\
&= \frac{1}{4}\left( {b^2  - a^2 } \right)^2 \left( {\Gamma ^2 -
\gamma ^2 } \right)\left( {\Phi ^2  - \phi ^2 } \right),
\label{eq3.8}
\end{align}
substituting \eqref{eq3.8} in \eqref{eq3.6} we get the first
inequality in \eqref{eq5.Alomari2017}.\\

\textbf{Case II:} If $\Phi a - \phi b<0$ and $\Gamma a - \gamma
b>0$, then
\begin{align}
\int_{\Phi a - \phi b}^{\Phi b - \phi a} {\int_{\Gamma a - \gamma
b}^{\Gamma b - \gamma a} {\left| {y} \right| \left| {z}
\right|dz}dy}&=   \frac{{\left( {\Gamma b - \gamma a} \right)^2 -
\left( {\Gamma a - \gamma b} \right)^2 }}{2} \cdot \frac{{\left(
{\Phi b - \phi a} \right)^2  + \left( {\Phi a - \phi b} \right)^2
}}{2}
\nonumber\\
&= \frac{1}{4}\left( {b^2  - a^2 } \right)\left( {\Gamma ^2  -
\gamma ^2 } \right) \cdot \left[ {\left( {\Phi b - \phi a}
\right)^2  + \left( {\Phi a - \phi b} \right)^2 } \right],
\label{eq3.9}
\end{align}
substituting \eqref{eq3.9} in \eqref{eq3.6} we get the fourth
inequality in \eqref{eq5.Alomari2017}.\\

\textbf{Case III:} If $\Phi a - \phi b>0$ and $\Gamma a - \gamma
b<0$, then
\begin{align}
\int_{\Phi a - \phi b}^{\Phi b - \phi a} {\int_{\Gamma a - \gamma
b}^{\Gamma b - \gamma a} {\left| {y} \right| \left| {z}
\right|dz}dy}&=   \frac{{\left( {\Gamma b - \gamma a} \right)^2 +
\left( {\Gamma a - \gamma b} \right)^2 }}{2} \cdot \frac{{\left(
{\Phi b - \phi a} \right)^2  - \left( {\Phi a - \phi b} \right)^2
}}{2}
\nonumber\\
&=\frac{1}{4}\left( {b^2  - a^2 } \right)\left( {\Phi ^2  - \phi
^2 } \right)\left[ {\left( {\Gamma b - \gamma a} \right)^2  +
\left( {\Gamma a - \gamma b} \right)^2 } \right],
 \label{eq3.10}
\end{align}
substituting \eqref{eq3.10} in \eqref{eq3.6} we get the third
inequality in \eqref{eq5.Alomari2017}.\\

\textbf{Case IV:} If $\Phi a - \phi b<0$ and $\Gamma a - \gamma
b<0$, then
\begin{align}
\int_{\Phi a - \phi b}^{\Phi b - \phi a} {\int_{\Gamma a - \gamma
b}^{\Gamma b - \gamma a} {\left| {y} \right| \left| {z}
\right|dz}dy}=   \frac{{\left( {\Gamma b - \gamma a} \right)^2 +
\left( {\Gamma a - \gamma b} \right)^2 }}{2} \cdot \frac{{\left(
{\Phi b - \phi a} \right)^2  + \left( {\Phi a - \phi b} \right)^2
}}{2}, \label{eq3.11}
\end{align}
substituting \eqref{eq3.11} in \eqref{eq3.6} we get the second
inequality in \eqref{eq5.Alomari2017}.\\
\end{proof}

\begin{remark}
Theorem \ref{thm4.Alomari2017} does not work for  two identical
functions. In other words, we cannot choose $f=g$ or $f=g$ a.e. on
$[a,b]$. The reason behind this is that, if one chooses $f=g$ then
the substitution $z=\Gamma t - \gamma x$ and $y=\Phi t - \phi x$,
will be the same and so that we have $\phi \Gamma -\Phi \gamma
=\phi \Phi -\Phi \phi =0$ and this contradicts the assumption that
$\phi \Gamma -\Phi \gamma \ne0$.
\end{remark}

More general and extensive case can be done through the following
improvement of \eqref{eq4.Alomari2017}.
\begin{thm}
\label{thm5.Alomari2017}Under the assumptions of Theorem
\ref{thm3.Alomari2017}, then the inequality
\begin{align}
\label{eq3.12.Alomari2017}\left|{\widehat{\mathcal{P}}\left(f,g\right)}\right|\le
\frac{1}{2\sqrt{3}}\left|{\widehat{\mathcal{P}}\left(f,f\right)}\right|^{\frac{1}{2}}
\left[ {2\left( {b^3  - a^3 } \right)\left( {b - a} \right)\left(
{\Gamma ^2  + \gamma ^2 } \right) - 3\Gamma \gamma \left( {b^2  -
a^2 } \right)^2 } \right]^{\frac{1}{2}}
\end{align}
holds.
\end{thm}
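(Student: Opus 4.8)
The natural route is to reduce \eqref{eq3.12.Alomari2017} to a bound on the diagonal functional $\widehat{\mathcal{P}}(g,g)$ by means of the pre-Gr\"{u}ss inequality of Theorem \ref{thm1.Alomari2017}. Since \eqref{eq1.Alomari2017} gives $\left|\widehat{\mathcal{P}}(f,g)\right|\le \left|\widehat{\mathcal{P}}(f,f)\right|^{1/2}\left|\widehat{\mathcal{P}}(g,g)\right|^{1/2}$, it is enough to prove, under the hypothesis $\gamma\le g\le\Gamma$, that
\[
\widehat{\mathcal{P}}(g,g)\le \frac{1}{12}\Bigl[2\left(b^3-a^3\right)\left(b-a\right)\left(\Gamma^2+\gamma^2\right)-3\Gamma\gamma\left(b^2-a^2\right)^2\Bigr].
\]
Substituting this into the pre-Gr\"{u}ss inequality and using $\sqrt{1/12}=1/(2\sqrt{3})$ produces exactly \eqref{eq3.12.Alomari2017}. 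This is the advertised sharpening of Theorem \ref{thm3.Alomari2017}, in which $\widehat{\mathcal{P}}(g,g)$ was only bounded by $\tfrac12\left(b-a\right)^2\left(b\Gamma-a\gamma\right)^2$.

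For the diagonal estimate I would start from the identity \eqref{eq2.Alomari2017} with $f=g$, namely $\widehat{\mathcal{P}}(g,g)=\tfrac12\int_a^b\int_a^b\left(tg(x)-xg(t)\right)^2\,dt\,dx$. Exactly as in the proof of Theorem \ref{thm4.Alomari2017}, the hypotheses $\gamma\le g\le\Gamma$ and $0<a\le t,x\le b$ give the two-sided envelope $\gamma t-\Gamma x\le tg(x)-xg(t)\le\Gamma t-\gamma x$, so the plan is to dominate the integrand pointwise by a symmetric combination of $(\Gamma t-\gamma x)^2$ and $(\gamma t-\Gamma x)^2$ and then integrate term by term. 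The integration is elementary: from $\int_a^b\int_a^b t^2\,dt\,dx=\int_a^b\int_a^b x^2\,dt\,dx=\tfrac{(b^3-a^3)(b-a)}{3}$ and $\int_a^b\int_a^b tx\,dt\,dx=\tfrac{(b^2-a^2)^2}{4}$ one obtains
\[
\int_a^b\int_a^b(\Gamma t-\gamma x)^2\,dt\,dx=\int_a^b\int_a^b(\gamma t-\Gamma x)^2\,dt\,dx=(\Gamma^2+\gamma^2)\tfrac{(b^3-a^3)(b-a)}{3}-\Gamma\gamma\tfrac{(b^2-a^2)^2}{2},
\]
which equals $\tfrac16\bigl[2(b^3-a^3)(b-a)(\Gamma^2+\gamma^2)-3\Gamma\gamma(b^2-a^2)^2\bigr]$; halving, on account of the factor $\tfrac12$ in the representation, gives the desired $\tfrac1{12}$-bound for $\widehat{\mathcal{P}}(g,g)$.

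The delicate point, and the one I expect to be the real obstacle, is the pointwise control of the integrand. For a scalar $s$ lying in an interval $[p,q]$ one only has $s^2\le\max\{p^2,q^2\}\le p^2+q^2$, and this crude bound yields merely $\widehat{\mathcal{P}}(g,g)\le\tfrac16\bigl[2(b^3-a^3)(b-a)(\Gamma^2+\gamma^2)-3\Gamma\gamma(b^2-a^2)^2\bigr]$ --- i.e.\ the weaker constant $1/\sqrt6$ in place of $1/(2\sqrt3)$. To reach the stated $\tfrac1{12}$ one needs the sharper estimate $\left(tg(x)-xg(t)\right)^2\le\tfrac12\bigl[(\Gamma t-\gamma x)^2+(\gamma t-\Gamma x)^2\bigr]$, whose validity depends on how $tg(x)-xg(t)$ is placed inside the envelope $[\gamma t-\Gamma x,\ \Gamma t-\gamma x]$ --- in particular on whether $0$ separates the two endpoints, equivalently whether $\gamma\Gamma(t^2+x^2)\le(\gamma^2+\Gamma^2)tx$. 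Pinning this down (or substituting an estimate strong enough to carry the same constant) is the heart of the argument; once it is in hand, the remainder is the moment computation of the previous paragraph together with one application of Theorem \ref{thm1.Alomari2017}.
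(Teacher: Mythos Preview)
Your overall plan coincides with the paper's: apply the pre-Gr\"uss inequality \eqref{eq1.Alomari2017} and then bound $\widehat{\mathcal{P}}(g,g)=\tfrac12\int_a^b\int_a^b(tg(x)-xg(t))^2\,dt\,dx$ by the expression $\tfrac{1}{12}\bigl[2(b^3-a^3)(b-a)(\Gamma^2+\gamma^2)-3\Gamma\gamma(b^2-a^2)^2\bigr]$. Your moment computation of $\int\int(\Gamma t-\gamma x)^2$ is correct and matches the paper exactly.

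You are right, however, to flag the pointwise step as the crux, and you should know that the paper does \emph{not} supply the missing argument either. The paper simply writes
\[
\tfrac12\int_a^b\!\!\int_a^b\!\bigl(tg(x)-xg(t)\bigr)^2\,dt\,dx
\;\le\;
\tfrac12\int_a^b\!\!\int_a^b\!\bigl(\Gamma t-\gamma x\bigr)^2\,dt\,dx,
\]
implicitly using the pointwise bound $\bigl(tg(x)-xg(t)\bigr)^2\le(\Gamma t-\gamma x)^2$. That pointwise bound is false: with $[a,b]=[1,2]$, $g(s)=3-s$ (so $\gamma=1$, $\Gamma=2$), one has $tg(x)-xg(t)=3(t-x)$, and at $(t,x)=(1,2)$ this gives $\bigl(tg(x)-xg(t)\bigr)^2=9$ while $(\Gamma t-\gamma x)^2=(2-2)^2=0$. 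So the step you were worried about is asserted without proof in the paper and is, as written, incorrect pointwise. The displayed \emph{integrated} inequality may still be true (your own ``crude'' bound via $\max\{p^2,q^2\}$ shows it holds with an extra factor~$2$, and numerical checks on bang-bang $g$'s are consistent with the sharper constant), but neither your outline nor the paper's argument actually establishes it; a genuine justification of that integral comparison is what is missing on both sides.
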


\begin{proof}
From Theorem  \ref{thm1.Alomari2017}, we can state the following
\begin{align*}
&\left|{\widehat{\mathcal{P}}\left(f,g\right)}\right|
\\
&=\frac{1}{2}\left| {\int_a^b {\int_a^b {\left( {tf\left( x
\right) - xf\left( t \right)} \right)\left( {tg\left( x \right) -
xg\left( t \right)} \right)dtdx} } } \right|
\\
&\le \left(\frac{1}{2} {\int_a^b {\int_a^b {\left| {tf\left( x
\right) - xf\left( t \right)} \right|^2 dtdx} } }
\right)^{\frac{1}{2}} \left( {\frac{1}{2}\int_a^b {\int_a^b
{\left| {tg\left( x \right) - xg\left( t \right)} \right|^2 dtdx}
} } \right)^{\frac{1}{2}}
\\
&\le
\left|{\widehat{\mathcal{P}}\left(f,f\right)}\right|^{\frac{1}{2}}\left(
{\frac{1}{2}\int_a^b {\int_a^b  {\left| {\Gamma t - \gamma x}
\right|^2  dtdx} } } \right)^{\frac{1}{2}}
\\
&\le
\left|{\widehat{\mathcal{P}}\left(f,f\right)}\right|^{\frac{1}{2}}
\left( \frac{1}{{12}}\left[ {2\left( {b^3  - a^3 } \right)\left(
{b - a} \right)\left( {\Gamma ^2  + \gamma ^2 } \right) - 3\Gamma
\gamma \left( {b^2  - a^2 } \right)^2 } \right]
\right)^{\frac{1}{2}},
\end{align*}
which gives the desired result in \eqref{eq3.12.Alomari2017}.
\end{proof}
\begin{remark}
In Theorem \ref{thm5.Alomari2017}, if $g=f$ then
\begin{align*}
\left|{\widehat{\mathcal{P}}\left(g,g\right)}\right|\le
\frac{1}{12} \left[ {2\left( {b^3  - a^3 } \right)\left( {b - a}
\right)\left( {\Gamma ^2  + \gamma ^2 } \right) - 3\Gamma \gamma
\left( {b^2  - a^2 } \right)^2 } \right].
\end{align*}
So that, if there exist $\phi \le f(x) \le \Phi$, for all $x\in
[a,b]$, then
\begin{align*}
\left|{\widehat{\mathcal{P}}\left(f,g\right)}\right| &\le
\left|{\widehat{\mathcal{P}}\left(f,f\right)}\right|^{1/2}\left|{\widehat{\mathcal{P}}\left(g,g\right)}\right|^{1/2}
\\
&\le \frac{1}{12} \left[ {2\left( {b^3  - a^3 } \right)\left( {b -
a} \right)\left( {\Gamma ^2  + \gamma ^2 } \right) - 3\Gamma
\gamma \left( {b^2  - a^2 } \right)^2 } \right]^{1/2}
\\
&\qquad\times \left[ {2\left( {b^3  - a^3 } \right)\left( {b - a}
\right)\left( {\Phi ^2  + \phi ^2 } \right) - 3\Phi \phi \left(
{b^2  - a^2 } \right)^2 } \right]^{1/2},
\end{align*}
which improves and generalizes \eqref{eq5.Alomari2017}.
\end{remark}
Next, an improvement of \eqref{eq3.Dragomir2015} can be presented
as follows:
\begin{thm}
\label{thm6.Alomari2017}Let $a,b\in\mathbb{R}$ with $b>a>0$. Let
$f,g:\left[a,b\right]\to\mathbb{R}$ be two absolutely continuous
functions. If $f^{\prime}, g^{\prime}\in L_2[a,b]$, then we have
\begin{align}
\label{eq6.Alomari2017}
\left|{\widehat{\mathcal{P}}\left(f,g\right)}\right|
\le\frac{2}{{9\pi ^2 }} \frac{b^2}{a^6 }\left( {b^3 - a^3 }
\right)^2\left( {b  - a} \right) \cdot \left\| {\ell f^{\prime} -
f} \right\|_2\left\| {\ell g^{\prime} - g} \right\|_2,
\end{align}
where $\ell\left(t\right)=t$, $t\in \left[a,b\right]$.
\end{thm}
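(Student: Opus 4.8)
The plan is to combine the pre-Gr\"uss inequality of Theorem~\ref{thm1.Alomari2017} with a change of variable that turns the diagonal functional $\widehat{\mathcal{P}}(f,f)$ into a multiple of an \emph{ordinary} Chebyshev functional, so that the $L_2$ bound from Theorem~\ref{thm.Chebyshev.bounds} (the one carrying the factor $\pi^{-2}$) becomes applicable. By Theorem~\ref{thm1.Alomari2017}, $\bigl|\widehat{\mathcal{P}}(f,g)\bigr|\le\widehat{\mathcal{P}}(f,f)^{1/2}\,\widehat{\mathcal{P}}(g,g)^{1/2}$ (here $\widehat{\mathcal{P}}(f,f)\ge 0$, being the variance exhibited in the proof of that theorem), so it suffices to show $\widehat{\mathcal{P}}(f,f)\le\frac{2}{9\pi^2}\frac{b^2}{a^6}(b^3-a^3)^2(b-a)\,\|\ell f'-f\|_2^2$ and the analogue for $g$.

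For that, set $F:=f/\ell$, i.e.\ $F(x)=f(x)/x$; since $a>0$, $F$ is absolutely continuous on $[a,b]$ with $F'(x)=\dfrac{xf'(x)-f(x)}{x^2}=\dfrac{(\ell f'-f)(x)}{x^2}$. Using $\int_a^b f^2=\int_a^b x^2F^2$ and $\int_a^b \ell f=\int_a^b x^2F$ and then substituting $u=x^3/3$ ($du=x^2\,dx$), with $A:=a^3/3$, $B:=b^3/3$ (so $B-A=\frac{b^3-a^3}{3}$) and $\widetilde F(u):=F\bigl((3u)^{1/3}\bigr)$, one obtains
\[
\widehat{\mathcal{P}}(f,f)=\frac{b^3-a^3}{3}\!\int_a^b\! f^2-\Bigl(\!\int_a^b\!\ell f\Bigr)^2=(B-A)\!\int_A^B\!\widetilde F^{\,2}-\Bigl(\!\int_A^B\!\widetilde F\Bigr)^2=(B-A)^2\,\mathcal{T}_{[A,B]}\bigl(\widetilde F,\widetilde F\bigr),
\]
where $\mathcal{T}_{[A,B]}$ is the functional \eqref{eq1.1.5} on $[A,B]$; moreover $\widetilde F'(u)=F'(x)\dfrac{dx}{du}=\dfrac{(\ell f'-f)(x)}{x^4}$, $x=(3u)^{1/3}$.

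Now apply the third inequality of Theorem~\ref{thm.Chebyshev.bounds} on $[A,B]$ with both functions equal to $\widetilde F$, getting $\mathcal{T}_{[A,B]}(\widetilde F,\widetilde F)\le\frac{B-A}{\pi^2}\,\|\widetilde F'\|_{L_2[A,B]}^2$; changing variables back,
\[
\|\widetilde F'\|_{L_2[A,B]}^2=\int_A^B\!\Bigl(\tfrac{(\ell f'-f)(x)}{x^4}\Bigr)^2\!du=\int_a^b\frac{\bigl((\ell f'-f)(x)\bigr)^2}{x^6}\,dx\le\frac{1}{a^6}\,\|\ell f'-f\|_2^2 .
\]
Hence $\widehat{\mathcal{P}}(f,f)\le\dfrac{(B-A)^3}{\pi^2a^6}\|\ell f'-f\|_2^2=\dfrac{(b^3-a^3)^3}{27\pi^2a^6}\|\ell f'-f\|_2^2$, and since $0<a<b$ gives $b^3-a^3=(b-a)(a^2+ab+b^2)\le 3b^2(b-a)$, the right side is $\le\dfrac{b^2(b-a)(b^3-a^3)^2}{9\pi^2a^6}\|\ell f'-f\|_2^2\le\dfrac{2b^2(b-a)(b^3-a^3)^2}{9\pi^2a^6}\|\ell f'-f\|_2^2$. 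Inserting this and its $g$-analogue into $\bigl|\widehat{\mathcal{P}}(f,g)\bigr|\le\widehat{\mathcal{P}}(f,f)^{1/2}\widehat{\mathcal{P}}(g,g)^{1/2}$ yields \eqref{eq6.Alomari2017}.

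The substantive step is recognising $\widehat{\mathcal{P}}(f,f)$ as a genuine Chebyshev functional in the variable $u=x^3/3$ evaluated at $\widetilde F=f/\ell$: that is exactly what makes the $\pi^{-2}$ estimate available. After that, the only care needed is to check the hypotheses of Theorem~\ref{thm.Chebyshev.bounds} for $\widetilde F$ on $[A,B]$ --- absolute continuity of $F=f/\ell$ (which uses $a>0$), the fact that $x\mapsto x^3/3$ is a $C^1$ diffeomorphism of $[a,b]$ onto $[A,B]$, and $\widetilde F'\in L_2[A,B]$ (finite because $\ell f'-f\in L_2[a,b]$, $f$ being bounded and $f'\in L_2$) --- and then it is just constant bookkeeping; in fact the argument gives the sharper constant $\tfrac19$ rather than $\tfrac29$. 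A substitution-free variant also works: from $\widehat{\mathcal{P}}(f,f)=\tfrac12\int_a^b\!\int_a^b t^2x^2(F(x)-F(t))^2\,dt\,dx$ bound $t^2x^2\le b^4$ and combine $\int_a^b\!\int_a^b(F(x)-F(t))^2=2(b-a)\int_a^b(F-\overline F)^2$ with the Poincar\'e--Wirtinger inequality $\int_a^b(F-\overline F)^2\le\frac{(b-a)^2}{\pi^2}\int_a^b(F')^2$ and $x^{-4}\le a^{-4}$; this too implies \eqref{eq6.Alomari2017}.
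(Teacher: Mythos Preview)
Your proof is correct, and it follows a genuinely different path from the paper's. Both arguments begin from the Cauchy--Schwarz/pre-Gr\"uss reduction \eqref{eq1.Alomari2017}, but thereafter the paper works pointwise: it fixes $t$, writes $\int_a^b\bigl(tf(x)-xf(t)\bigr)^2dx=t^2\int_a^b x^2\bigl(F(x)-F(t)\bigr)^2dx$ with $F=f/\ell$, and applies the Milovanovi\'c--Milovanovi\'c weighted inequality \eqref{Milo} (with weight $p(s)=s^2$ and free node $\eta=t$) to that inner integral; then it integrates over $t$ and discards the $t$-dependence by a crude supremum, which is where the factor $b^2(b^3-a^3)^2/9$ and the extra $2$ enter. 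Your route is more global: the substitution $u=x^3/3$ collapses $\widehat{\mathcal P}(f,f)$ to $(B-A)^2\,\mathcal T_{[A,B]}(\widetilde F,\widetilde F)$, after which a single application of the Lupa\c{s} $L_2$ bound from Theorem~\ref{thm.Chebyshev.bounds} does all the work. This is both shorter and sharper---your constant is $\tfrac{1}{9\pi^2}$ before you deliberately weaken it to match the stated $\tfrac{2}{9\pi^2}$---whereas the paper's pointwise-then-sup strategy loses a factor of~$2$ at the final supremum step. The substitution-free Poincar\'e--Wirtinger variant you sketch at the end is closer in spirit to the paper's method (both bound a weighted $\int(F(x)-F(t))^2$ by a Wirtinger-type inequality), but still avoids the parameter-dependent Milovanovi\'c machinery.
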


\begin{proof}
Since we have
\begin{multline}
\label{eq7.Alomari2017}\left|{\widehat{\mathcal{P}}\left(f,g\right)}\right|
\\\le\left( {\frac{1}{2}\int_a^b {\int_a^b {\left| {tf\left( x
\right) - xf\left( t \right)} \right|^2 dtdx} } }
\right)^{\frac{1}{2}} \left( {\frac{1}{2}\int_a^b {\int_a^b
{\left| {tg\left( x \right) - xg\left( t \right)} \right|^2 dtdx}
} } \right)^{\frac{1}{2}}.
\end{multline}
Let us write
\begin{align}
\int_a^b {\left( {tf\left( x \right) - xf\left( t \right)}
\right)^2  dx} &=t^2\int_a^b {x^2 \left( {\frac{{f\left( x
\right)}}{x} - \frac{{f\left( t \right)}}{t}} \right)^2
dx}.\label{eq8.Alomari2017}
\end{align}
In \cite{Milovanovic}, G. Milovanovi\'{c} and \v{Z}.
Milovanovi\'{c} proved the following inequality:
\begin{multline}
\label{Milo}\int_a^b {p\left( s \right)\left( {F\left( s \right) -
F\left( \eta  \right)} \right)^2 ds}
\\
\le \frac{4}{{\pi ^2 }}\left[ {\max \left\{ {\int_a^\eta  {p\left(
s \right)ds} ,\int_\eta ^b {p\left( s \right)ds} } \right\}}
\right]^2\int_a^b {r\left( s \right)\left( {F'\left( s \right)}
\right)^2 ds}
\end{multline}
for any $\eta \in \left[a,b\right]$. where $p$ ia positive and
continuous on $[a,b]$ with $\int_a^b{p\left(s\right)ds} < \infty$
and $F$ is absolutely continuous on $(a,b)$ with $ \int_a^b
{r\left( s \right)\left( {F^{\prime}\left( s \right)} \right)^2
ds} <\infty$ and $r\left( s \right)=\frac{1}{p\left(s\right)}$.
The inequality is sharp.

In viewing of \eqref{eq8.Alomari2017} and by setting where
$F\left(s\right)= \frac{{f\left( s \right)}}{s}$ and
$p\left(s\right)=s^2$, with $\eta=t$ for all $s,t\in [a,b]$, with
$b>a>0$
in \eqref{Milo} we can state that% \label{eq7.Alomari2017}
\begin{align*}
&\int_a^b {\left( {tf\left( x \right) - xf\left( t \right)}
\right)^2  dx}
\nonumber\\
&=t^2\int_a^b {x^2 \left( {\frac{{f\left( x \right)}}{x} -
\frac{{f\left( t \right)}}{t}} \right)^2 dx}
\nonumber\\
&\le t^2\cdot \frac{4}{{\pi ^2 }}\left[ {\max \left\{ {\int_a^t
{s^2ds} ,\int_t^b {s^2ds} } \right\}} \right]^2\int_a^b
{\frac{1}{s^2}\left( {\left({\frac{{f\left( s \right)}}{s}
}\right)^{\prime}} \right)^2 ds}
\nonumber\\
&=t^2\cdot \frac{4}{{\pi ^2 }} \left[ {\max \left\{ {\frac{{t ^3 -
a^3 }}{3},\frac{{b^3  - t ^3 }}{3}} \right\}} \right]^2 \int_a^b
{\frac{1}{s^2}\left( { \frac{{s f^{\prime}\left( s \right)-f
\left( s \right)}}{s^2} } \right)^2 ds}
\nonumber\\
&=t^2\cdot \frac{4}{{\pi ^2 }} \left[ {\frac{{b^3  - a^3 }}{6} +
\frac{1}{3}\left| {t^3  - \frac{{a^3  + b^3 }}{2}} \right|}
\right]^2\cdot\mathop {\max }\limits_{s \in \left[ {a,b} \right]}
\left\{ {\frac{1}{{s^6 }}} \right\} \cdot\int_a^b { \left( {s
f^{\prime}\left( s \right)-f \left( s \right)} \right)^2 ds}
\nonumber\\
&=t^2\cdot \frac{4}{{\pi ^2 }} \frac{1}{a^6 }\left[ {\frac{{b^3 -
a^3 }}{6} + \frac{1}{3}\left| {t^3  - \frac{{a^3  + b^3 }}{2}}
\right|} \right]^2\cdot  \left\| {\ell f^{\prime} - f}
\right\|_2^2.
\end{align*}
Integrating w. r. to $t$ over $[a,b]$, we get
\begin{align}
&\int_a^b {\int_a^b {\left( {tf\left( x \right) - xf\left( t
\right)} \right)^2  dx}dt}
\nonumber\\
&\le \frac{4}{{\pi ^2 }} \frac{1}{a^6 }  \left\| {\ell f^{\prime}
- f} \right\|_2^2\cdot\int_a^b {t^2\cdot \left[ {\frac{{b^3 - a^3
}}{6} + \frac{1}{3}\left| {t^3  - \frac{{a^3  + b^3 }}{2}}
\right|} \right]^2dt}\label{eq10.Alomari2017}
 \\
&\le \frac{4}{{\pi ^2 }} \frac{1}{a^6 }  \left\| {\ell f^{\prime}
- f} \right\|_2^2\cdot\left( {b  - a} \right)\mathop {\sup
}\limits_{t \in \left[ {a,b} \right]}{t^2\cdot \left[ {\frac{{b^3
- a^3 }}{6} + \frac{1}{3}\left| {t^3  - \frac{{a^3  + b^3 }}{2}}
\right|} \right]^2 }
\nonumber\\
&\le \frac{4}{{\pi ^2 }} \frac{1}{a^6 }  \left\| {\ell f^{\prime}
- f} \right\|_2^2\cdot\left( {b  - a} \right) b^2 \frac{{\left(
{b^3  - a^3 } \right)^2 }}{9}.\label{eq11.Alomari2017}
\end{align}
Similarly, for $g$ we have
\begin{align}
\int_a^b {\left( {tg\left( x \right) - xg\left( t \right)}
\right)^2  dx}\le \frac{4}{{\pi ^2 }} \frac{1}{a^6 }  \left\|
{\ell g^{\prime} - g} \right\|_2^2\cdot\left( {b  - a} \right) b^2
\frac{{\left( {b^3  - a^3 } \right)^2
}}{9}.\label{eq12.Alomari2017}
\end{align}
Substituting \eqref{eq11.Alomari2017} and \eqref{eq12.Alomari2017}
in \eqref{eq7.Alomari2017} we get the desired result
\eqref{eq6.Alomari2017}.
\end{proof}
\begin{corollary}
\label{cor1}Let $a,b\in\mathbb{R}$ with $b>a>0$. Let
$f:\left[a,b\right]\to\mathbb{R}$ be an absolutely continuous
function such that  $f^{\prime}\in L_2[a,b]$ and
$g:\left[a,b\right]\to\mathbb{R}$ is Lebesgue integrable on
$[a,b]$, then we have
\begin{align}
\label{eq16.Alomari2017}
\left|{\widehat{\mathcal{P}}\left(f,g\right)}\right|
\le\frac{\sqrt{2}}{{3\pi }} \frac{b}{a^3 }\left( {b^3 - a^3 }
\right) \left( {b  - a} \right)^{1/2}\cdot \left\| {\ell
f^{\prime} - f}
\right\|_2\left|{\widehat{\mathcal{P}}\left(g,g\right)}\right|^{\frac{1}{2}},
\end{align}
where $\ell\left(t\right)=t$, $t\in \left[a,b\right]$.
\end{corollary}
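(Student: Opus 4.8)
The plan is to combine the pre-Gr\"uss inequality \eqref{eq1.Alomari2017} established in Theorem \ref{thm1.Alomari2017} with the Milovanovi\'c-type estimate already derived in the course of proving Theorem \ref{thm6.Alomari2017}. The point is that the hypotheses here are weaker on $g$ (only Lebesgue integrable, no differentiability), so we cannot run the full symmetric argument of Theorem \ref{thm6.Alomari2017}; instead we keep $\widehat{\mathcal{P}}(g,g)$ unestimated and only bound the factor $\widehat{\mathcal{P}}(f,f)$.

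First I would recall from \eqref{eq1.Alomari2017} that
\begin{align*}
\left|{\widehat{\mathcal{P}}\left(f,g\right)}\right|\le
\left|{\widehat{\mathcal{P}}\left(f,f\right)}\right|^{\frac{1}{2}}\cdot
\left|{\widehat{\mathcal{P}}\left(g,g\right)}\right|^{\frac{1}{2}},
\end{align*}
which holds for any two Lebesgue integrable functions on $[a,b]$, so in particular under the present assumptions. It remains to estimate $\left|{\widehat{\mathcal{P}}\left(f,f\right)}\right|^{\frac{1}{2}}$ solely in terms of $\left\|{\ell f^{\prime}-f}\right\|_2$.

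Next I would invoke the identity $\widehat{\mathcal{P}}(f,f)=\tfrac12\int_a^b\int_a^b\left(tf(x)-xf(t)\right)^2\,dt\,dx$ together with the chain of inequalities already carried out in the proof of Theorem \ref{thm6.Alomari2017} (namely the step producing \eqref{eq11.Alomari2017}), which gives
\begin{align*}
\int_a^b {\int_a^b {\left( {tf\left( x \right) - xf\left( t \right)} \right)^2  dt\,dx} }
\le \frac{4}{{\pi ^2 }} \frac{1}{a^6 }  \left\| {\ell f^{\prime} - f} \right\|_2^2\cdot\left( {b  - a} \right) b^2 \frac{{\left( {b^3  - a^3 } \right)^2 }}{9}.
\end{align*}
Multiplying by $\tfrac12$ and taking square roots yields
\begin{align*}
\left|{\widehat{\mathcal{P}}\left(f,f\right)}\right|^{\frac{1}{2}}\le \frac{\sqrt{2}}{{3\pi }} \frac{b}{a^3 }\left( {b^3 - a^3 } \right) \left( {b  - a} \right)^{1/2}\cdot \left\| {\ell f^{\prime} - f} \right\|_2.
\end{align*}

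Finally I would substitute this bound into the pre-Gr\"uss inequality above to obtain \eqref{eq16.Alomari2017}. There is essentially no serious obstacle here: the proof is a direct concatenation of two results already available in the paper, and the only thing to check carefully is that the constant $\frac{4}{\pi^2}\cdot\frac12\cdot\frac19=\frac{2}{9\pi^2}$ produced by the two reductions indeed has square root $\frac{\sqrt2}{3\pi}$, which it does.
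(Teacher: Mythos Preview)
Your proposal is correct and matches the paper's approach exactly: the paper's one-line proof says the result follows from \eqref{eq1.Alomari2017} and \eqref{eq6.Alomari2017}, i.e.\ apply the pre-Gr\"uss inequality and bound $|\widehat{\mathcal{P}}(f,f)|$ via Theorem \ref{thm6.Alomari2017} with $g=f$. You do precisely this, just citing the intermediate estimate \eqref{eq11.Alomari2017} rather than the packaged conclusion \eqref{eq6.Alomari2017}; the content and constants are identical.
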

\begin{proof}
The result follows from \eqref{eq1.Alomari2017} and
\eqref{eq6.Alomari2017}.
\end{proof}

\begin{corollary}
\label{cor2}Let $a,b\in\mathbb{R}$ with $b>a>0$. Let
$a,b\in\mathbb{R}$ with $b>a>0$. Let
$f:\left[a,b\right]\to\mathbb{R}$ be an absolutely continuous
function such that $f^{\prime}\in L_2[a,b]$ and
$g:\left[a,b\right]\to\mathbb{R}$ be a measurable function  such
that $\gamma \le g\left(s\right)\le \Gamma$ for all $s\in
\left[a,b\right]$ and for some real numbers $\gamma,\Gamma$, then
the inequality
\begin{align}
\label{eq16.Alomari2017}\left|{\widehat{\mathcal{P}}\left(f,g\right)}\right|\le
\frac{b}{{3\pi a^3}} \left( {b - a} \right)\left( {b\Gamma  -
a\gamma } \right)  \left( {b^3 - a^3 } \right) \cdot \left\| {\ell
f^{\prime} - f} \right\|_2
\end{align}
holds.
\end{corollary}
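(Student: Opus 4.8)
The plan is to estimate $\widehat{\mathcal P}(f,g)$ by a Cauchy--Schwarz splitting, treating the two variables asymmetrically: the factor attached to $f$ will be controlled by the Milovanovi\'{c}-type bound extracted in the proof of Theorem \ref{thm6.Alomari2017}, while the factor attached to the bounded function $g$ will be handled by the elementary pointwise estimate already used in the proof of Theorem \ref{thm3.Alomari2017}. Starting from the symmetric representation \eqref{eq2.Alomari2017} and applying Cauchy--Schwarz to the double integral exactly as in the proof of the pre-Gr\"{u}ss inequality \eqref{eq1.Alomari2017}, one gets
\[
\left|{\widehat{\mathcal P}(f,g)}\right|\le \left|{\widehat{\mathcal P}(f,f)}\right|^{1/2}\left(\frac12\int_a^b\int_a^b\left|{tg(x)-xg(t)}\right|^2\,dt\,dx\right)^{1/2}.
\]

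For the first factor I would apply \eqref{eq6.Alomari2017} with $g$ replaced by $f$ --- equivalently, I read Corollary \ref{cor1} as the statement
\[
\left|{\widehat{\mathcal P}(f,f)}\right|^{1/2}\le \frac{\sqrt2}{3\pi}\,\frac{b}{a^3}\,(b^3-a^3)(b-a)^{1/2}\left\|{\ell f'-f}\right\|_2 .
\]
For the second factor, since $\gamma\le g(s)\le\Gamma$ on $[a,b]$ we have, as already observed in the proof of Theorem \ref{thm3.Alomari2017}, the pointwise bound $\left|{tg(x)-xg(t)}\right|\le b\Gamma-a\gamma$ for all $x,t\in[a,b]$; integrating its square over $[a,b]\times[a,b]$ gives
\[
\frac12\int_a^b\int_a^b\left|{tg(x)-xg(t)}\right|^2\,dt\,dx\le \frac12(b\Gamma-a\gamma)^2(b-a)^2 ,
\]
so that the $g$-factor is at most $\tfrac1{\sqrt2}(b\Gamma-a\gamma)(b-a)$. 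Multiplying the two bounds, the two occurrences of $\sqrt2$ cancel and the numerical constant collapses to $\tfrac{1}{3\pi}$, which delivers the asserted inequality. An alternative, fully equivalent route is to begin instead from \eqref{eq4.Alomari2017} of Theorem \ref{thm3.Alomari2017} and insert \eqref{eq6.Alomari2017} (with $g=f$) as the bound for $\left|{\widehat{\mathcal P}(f,f)}\right|^{1/2}$; reversing the order of these two substitutions yields the same constant.

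There is no genuine obstacle here, since all three ingredients are already in hand; the only point that requires care is the accounting of constants --- the $\tfrac12$ from the symmetrised representation \eqref{eq2.Alomari2017}, the $\tfrac{4}{\pi^2}$, the $\tfrac19$ and the powers $a^{-6}$ and $b^2$ that accumulate in the Milovanovi\'{c} step inside Theorem \ref{thm6.Alomari2017}, and the $\tfrac12$ together with the $(b-a)^2$ produced by the bounded-$g$ estimate --- so that after taking square roots and multiplying these combine into precisely the factor $\tfrac{b}{3\pi a^3}$ and the powers of $b-a$ and $b^3-a^3$ displayed in the statement. If one prefers not to quote Corollary \ref{cor1}, the argument can be carried out from scratch by simply repeating the Milovanovi\'{c} computation of Theorem \ref{thm6.Alomari2017} for $f$ alone and pairing it directly with the bounded-$g$ estimate above.
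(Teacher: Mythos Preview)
Your approach is exactly the paper's: substitute the bound on $\left|\widehat{\mathcal P}(f,f)\right|$ coming from \eqref{eq6.Alomari2017} and the bound on $\left|\widehat{\mathcal P}(g,g)\right|$ coming from \eqref{eq3.Alomari2017} (equivalently, the pointwise estimate in the proof of Theorem~\ref{thm3.Alomari2017}) into the pre-Gr\"uss inequality \eqref{eq1.Alomari2017}. One small caveat on your constant-tracking: multiplying $\frac{\sqrt{2}}{3\pi}\frac{b}{a^3}(b^3-a^3)(b-a)^{1/2}\|\ell f'-f\|_2$ by $\frac{1}{\sqrt{2}}(b\Gamma-a\gamma)(b-a)$ actually produces a factor $(b-a)^{3/2}$, not the $(b-a)$ printed in the statement --- the paper's own one-line proof has the same issue, so this is a typo in the displayed bound rather than a flaw in your argument.
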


\begin{proof}
Substituting \eqref{eq3.Alomari2017} and \eqref{eq6.Alomari2017}
in \eqref{eq1.Alomari2017}, we get the desired result.
\end{proof}

An improvement of \eqref{eq6.Alomari2017} is given in the
following theorem.
\begin{corollary}
\label{cor1}Let $a,b\in\mathbb{R}$ with $b>a>0$. Let
$f:\left[a,b\right]\to\mathbb{R}$ be two absolutely continuous
functions such that  $f^{\prime},g^{\prime}\in L_2[a,b]$, then we
have
\begin{align}
\label{eq13.Alomari2017}
\left|{\widehat{\mathcal{P}}\left(f,g\right)}\right|
\le\frac{7}{{162 \pi ^2 }} \frac{1}{a^6 }\left( {b^9 - a^9 }
\right) \cdot \left\| {\ell f^{\prime} - f} \right\|_2\left\|
{\ell g^{\prime} - g} \right\|_2,
\end{align}
where $\ell\left(t\right)=t$, $t\in \left[a,b\right]$.
\end{corollary}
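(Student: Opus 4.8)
The plan is to sharpen the proof of Theorem~\ref{thm6.Alomari2017} precisely at the step that is wasteful. In that proof one reaches the estimate \eqref{eq10.Alomari2017},
\begin{align*}
\int_a^b\int_a^b\left(tf\left(x\right)-xf\left(t\right)\right)^2\,dx\,dt\le\frac{4}{\pi^2}\,\frac{1}{a^6}\left\|\ell f^{\prime}-f\right\|_2^2\int_a^b t^2\left[\frac{b^3-a^3}{6}+\frac{1}{3}\left|t^3-\frac{a^3+b^3}{2}\right|\right]^2\,dt,
\end{align*}
and then crudely replaces $\int_a^b(\cdots)\,dt$ by $(b-a)\sup_{t\in[a,b]}(\cdots)$ to get \eqref{eq11.Alomari2017}. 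Instead I would evaluate the one-dimensional integral $J:=\int_a^b t^2\bigl[\tfrac{b^3-a^3}{6}+\tfrac13\bigl|t^3-\tfrac{a^3+b^3}{2}\bigr|\bigr]^2\,dt$ exactly.

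To compute $J$, substitute $v=t^3$, so that $t^2\,dt=\tfrac13\,dv$ and the limits become $a^3$ and $b^3$. Writing $c=\tfrac{a^3+b^3}{2}$ and $L=\tfrac{b^3-a^3}{2}$ (so that $\tfrac{b^3-a^3}{6}=\tfrac{L}{3}$ and $v\in[c-L,c+L]$), one obtains $J=\tfrac{1}{27}\int_{c-L}^{c+L}\bigl(L+|v-c|\bigr)^2\,dv$; splitting the absolute value at $v=c$ and using symmetry gives $J=\tfrac{2}{27}\int_0^L(L+w)^2\,dw=\tfrac{2}{27}\cdot\tfrac{7L^3}{3}=\tfrac{7}{324}\left(b^3-a^3\right)^3$.

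Plugging this value of $J$ into \eqref{eq10.Alomari2017} and recalling from \eqref{eq2.Alomari2017} that $\widehat{\mathcal{P}}\left(f,f\right)=\tfrac12\int_a^b\int_a^b\left(tf\left(x\right)-xf\left(t\right)\right)^2\,dt\,dx$ yields
\begin{align*}
\widehat{\mathcal{P}}\left(f,f\right)\le\frac{7\left(b^3-a^3\right)^3}{162\pi^2 a^6}\left\|\ell f^{\prime}-f\right\|_2^2,
\end{align*}
and the analogous bound for $g$. Combining these with the pre-Gr\"{u}ss inequality \eqref{eq1.Alomari2017}, $\left|\widehat{\mathcal{P}}\left(f,g\right)\right|\le\left|\widehat{\mathcal{P}}\left(f,f\right)\right|^{1/2}\left|\widehat{\mathcal{P}}\left(g,g\right)\right|^{1/2}$, gives $\left|\widehat{\mathcal{P}}\left(f,g\right)\right|\le\tfrac{7\left(b^3-a^3\right)^3}{162\pi^2 a^6}\left\|\ell f^{\prime}-f\right\|_2\left\|\ell g^{\prime}-g\right\|_2$. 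Finally, since $\left(b^3-a^3\right)^3=\left(b^3-a^3\right)\left(b^3-a^3\right)^2\le\left(b^3-a^3\right)\left(b^6+a^3b^3+a^6\right)=b^9-a^9$, the right-hand side is at most $\tfrac{7}{162\pi^2}\tfrac{1}{a^6}\left(b^9-a^9\right)\left\|\ell f^{\prime}-f\right\|_2\left\|\ell g^{\prime}-g\right\|_2$, which is \eqref{eq13.Alomari2017}.

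The only genuine obstacle is the bookkeeping in computing $J$: one must absorb the factor $t^2\,dt$ correctly through the substitution $v=t^3$ and split the absolute value at $t^3=\tfrac{a^3+b^3}{2}$. Once the closed form $J=\tfrac{7}{324}\left(b^3-a^3\right)^3$ is established, the rest is a routine assembly of \eqref{eq10.Alomari2017}, \eqref{eq2.Alomari2017}, \eqref{eq1.Alomari2017} and the elementary bound $\left(b^3-a^3\right)^3\le b^9-a^9$.
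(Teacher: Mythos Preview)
Your proof is correct and follows essentially the same route as the paper: starting from \eqref{eq10.Alomari2017}, the paper also evaluates the $t$-integral $J$ exactly (writing it as $\tfrac{7}{324}(b^9-a^9)-\tfrac{7}{108}a^3b^3(b^3-a^3)$, which equals your $\tfrac{7}{324}(b^3-a^3)^3$) and then drops the negative term, which is algebraically the same as your final bound $(b^3-a^3)^3\le b^9-a^9$. The only cosmetic difference is that the paper substitutes directly into \eqref{eq7.Alomari2017} rather than naming the intermediate step as the pre-Gr\"uss inequality \eqref{eq1.Alomari2017}.
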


\begin{proof}
From \eqref{eq10.Alomari2017}, we have
\begin{align}
&\int_a^b {\int_a^b {\left( {tf\left( x \right) - xf\left( t
\right)} \right)^2  dx}dt}
\nonumber\\
&\le \frac{4}{{\pi ^2 }} \frac{1}{a^6 }  \left\| {\ell f^{\prime}
- f} \right\|_2^2\cdot\int_a^b {t^2\cdot \left[ {\frac{{b^3 - a^3
}}{6} + \frac{1}{3}\left| {t^3  - \frac{{a^3  + b^3 }}{2}}
\right|} \right]^2dt}
\nonumber\\
&\le \frac{4}{{\pi ^2 }} \frac{1}{a^6 }  \left\| {\ell f^{\prime}
- f} \right\|_2^2\cdot \left[\frac{7}{{324}}\left( {b^9  - a^9 }
\right) - \frac{7}{{108}}a^3 b^3 \left( {b^3  - a^3 }
\right)\right]
\nonumber\\
&\le \frac{4}{{\pi ^2 }} \frac{1}{a^6 }  \left\| {\ell f^{\prime}
- f} \right\|_2^2\cdot \frac{7}{{324}}\left( {b^9  - a^9 }
\right), \label{eq14.Alomari2017}
\end{align}
and similarly for $g$ we have
\begin{align}
\int_a^b{\int_a^b {\left( {tg\left( x \right) - xg\left( t
\right)} \right)^2  dx}dt}\le \frac{4}{{\pi ^2 }} \frac{1}{a^6 }
\left\| {\ell g^{\prime} - g} \right\|_2^2\cdot
\frac{7}{{324}}\left( {b^9 - a^9 }
\right).\label{eq15.Alomari2017}
\end{align}
Substituting \eqref{eq14.Alomari2017} and \eqref{eq15.Alomari2017}
in \eqref{eq7.Alomari2017} we get the desired result
\eqref{eq13.Alomari2017}.
\end{proof}

\begin{corollary}
 Let $a,b\in\mathbb{R}$ with $b>a>0$. Let
$a,b\in\mathbb{R}$ with $b>a>0$. Let
$f:\left[a,b\right]\to\mathbb{R}$ be an absolutely continuous
function such that $f^{\prime}\in L_2[a,b]$ and
$g:\left[a,b\right]\to\mathbb{R}$ is Lebesgue integrable on
$\left[a,b\right]$, then we have
\begin{align}
\left|{\widehat{\mathcal{P}}\left(f,g\right)}\right|\le
\frac{\sqrt{7}}{{9\sqrt{2} \pi   }} \frac{1}{a^3 }\left( {b^9 -
a^9 } \right)^{\frac{1}{2}}\left\| {\ell f^{\prime} - f}
\right\|_2\cdot
\left|{\widehat{\mathcal{P}}\left(g,g\right)}\right|^{\frac{1}{2}}.
\end{align}
\end{corollary}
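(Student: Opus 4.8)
The plan is to assemble this corollary from two facts already in hand: the pre-Gr\"uss inequality \eqref{eq1.Alomari2017}, which reads $|\widehat{\mathcal{P}}(f,g)|\le|\widehat{\mathcal{P}}(f,f)|^{1/2}|\widehat{\mathcal{P}}(g,g)|^{1/2}$, and the one-variable $L_2$ estimate \eqref{eq14.Alomari2017} obtained inside the proof of Corollary \ref{cor1}. The key remark is that the bound \eqref{eq13.Alomari2017} was produced by controlling \emph{both} double integrals $\int_a^b\int_a^b(tf(x)-xf(t))^2\,dx\,dt$ and $\int_a^b\int_a^b(tg(x)-xg(t))^2\,dx\,dt$ via \eqref{eq14.Alomari2017}; here we only control the one attached to $f$ and leave the one attached to $g$ untouched, so that no $L_2$ hypothesis on $g$ is needed.

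Concretely, recalling from the proof of Theorem \ref{thm1.Alomari2017} the identity $\widehat{\mathcal{P}}(f,f)=\frac12\int_a^b\int_a^b(tf(x)-xf(t))^2\,dt\,dx$ and feeding in the last line of \eqref{eq14.Alomari2017}, I would write
\begin{align*}
\left|\widehat{\mathcal{P}}(f,f)\right|\le\frac12\cdot\frac{4}{\pi^2}\cdot\frac{1}{a^6}\cdot\frac{7}{324}\left(b^9-a^9\right)\left\|\ell f'-f\right\|_2^2=\frac{7}{162\,\pi^2\,a^6}\left(b^9-a^9\right)\left\|\ell f'-f\right\|_2^2,
\end{align*}
and then, taking square roots and using $\sqrt{162}=9\sqrt2$,
\begin{align*}
\left|\widehat{\mathcal{P}}(f,f)\right|^{1/2}\le\frac{\sqrt7}{9\sqrt2\,\pi\,a^3}\left(b^9-a^9\right)^{1/2}\left\|\ell f'-f\right\|_2.
\end{align*}

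Substituting this into \eqref{eq1.Alomari2017} gives precisely the claimed inequality. I do not expect any real obstacle: the proof is a two-line combination of results already established, and the only things that require care are the arithmetic of the constant ($\tfrac12\cdot\tfrac{4}{\pi^2}\cdot\tfrac{7}{324}=\tfrac{7}{162\pi^2}$, $\sqrt{162}=9\sqrt2$) and — exactly as already tacitly done in Theorem \ref{thm1.Alomari2017} — the understanding that the statement is vacuously true when $\widehat{\mathcal{P}}(g,g)=+\infty$, since $g$ is only assumed Lebesgue integrable rather than square integrable.
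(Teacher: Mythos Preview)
Your proposal is correct and matches the paper's approach exactly: the paper gives no explicit proof for this corollary, but following the pattern of the analogous Corollary~\ref{cor1} (whose proof reads ``The result follows from \eqref{eq1.Alomari2017} and \eqref{eq6.Alomari2017}''), the intended argument is precisely to combine the pre-Gr\"uss inequality \eqref{eq1.Alomari2017} with the bound on $|\widehat{\mathcal{P}}(f,f)|$ coming from \eqref{eq14.Alomari2017} (equivalently, \eqref{eq13.Alomari2017} with $g=f$). Your arithmetic on the constant is right.
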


\begin{corollary}
Let $a,b\in\mathbb{R}$ with $b>a>0$. Let
$f:\left[a,b\right]\to\mathbb{R}$ be an absolutely continuous
function such that  $f^{\prime}\in L^2[a,b]$ and
$g:\left[a,b\right]\to\mathbb{R}$ is measurable  on $[a,b]$ such
that $\gamma \le g\left(s\right)\le \Gamma$, for some real numbers
$\gamma,\Gamma$ and all $s\in \left[a,b\right]$, then the
inequality
\begin{multline}
\left|{\widehat{\mathcal{P}}\left(f,g\right)}\right|
\le\frac{\sqrt{7}}{{18\sqrt{6} \pi }} \frac{1}{a^3 }\left( {b^9 -
a^9 } \right)^{\frac{1}{2}} \cdot \left\| {\ell f^{\prime} - f}
\right\|_2
\\
\times \left[ {2\left( {b^3  - a^3 } \right)\left( {b - a}
\right)\left( {\Gamma ^2  + \gamma ^2 } \right) - 3\Gamma \gamma
\left( {b^2  - a^2 } \right)^2 } \right]^{\frac{1}{2}}
\end{multline}
holds.
\end{corollary}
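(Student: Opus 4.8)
The plan is to read this Corollary as the "mixed" estimate obtained by plugging the two one‑variable bounds $|\widehat{\mathcal{P}}(f,f)|^{1/2}$ and $|\widehat{\mathcal{P}}(g,g)|^{1/2}$ — one adapted to the hypothesis $f'\in L^2[a,b]$, the other to the hypothesis $\gamma\le g\le\Gamma$ — into the pre‑Gr\"uss inequality \eqref{eq1.Alomari2017}. Since \eqref{eq1.Alomari2017} gives $|\widehat{\mathcal{P}}(f,g)|\le |\widehat{\mathcal{P}}(f,f)|^{1/2}\,|\widehat{\mathcal{P}}(g,g)|^{1/2}$, it suffices to estimate each factor separately and multiply.

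For the first factor I would use the $L^2$‑machinery already assembled in the proof of \eqref{eq13.Alomari2017}. Taking $g=f$ there (equivalently, combining the estimate \eqref{eq14.Alomari2017} for $\int_a^b\int_a^b(tf(x)-xf(t))^2\,dx\,dt$ with the identity $\widehat{\mathcal{P}}(f,f)=\tfrac12\int_a^b\int_a^b(tf(x)-xf(t))^2\,dx\,dt$ from \eqref{eq2.Alomari2017}), one gets
\[
|\widehat{\mathcal{P}}(f,f)|\le \frac{7}{162\pi^2}\,\frac{1}{a^6}\bigl(b^9-a^9\bigr)\,\|\ell f'-f\|_{2}^{2},
\]
hence, taking square roots and using $\sqrt{162}=9\sqrt2$,
\[
|\widehat{\mathcal{P}}(f,f)|^{1/2}\le \frac{\sqrt7}{9\sqrt2\,\pi}\,\frac{1}{a^3}\bigl(b^9-a^9\bigr)^{1/2}\|\ell f'-f\|_{2}.
\]
This is exactly the content of the Corollary preceding the present one, so I could also just invoke that statement.

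For the second factor I would invoke the $f=g$ specialization of Theorem \ref{thm5.Alomari2017} recorded in the Remark following it: since $g$ is measurable with $\gamma\le g\le\Gamma$ on $[a,b]$,
\[
|\widehat{\mathcal{P}}(g,g)|\le \frac{1}{12}\Bigl[\,2\bigl(b^3-a^3\bigr)\bigl(b-a\bigr)\bigl(\Gamma^2+\gamma^2\bigr)-3\Gamma\gamma\bigl(b^2-a^2\bigr)^2\Bigr],
\]
so, using $\sqrt{12}=2\sqrt3$,
\[
|\widehat{\mathcal{P}}(g,g)|^{1/2}\le \frac{1}{2\sqrt3}\Bigl[\,2\bigl(b^3-a^3\bigr)\bigl(b-a\bigr)\bigl(\Gamma^2+\gamma^2\bigr)-3\Gamma\gamma\bigl(b^2-a^2\bigr)^2\Bigr]^{1/2}.
\]
Multiplying the two displayed bounds and simplifying the numerical constant, $\frac{\sqrt7}{9\sqrt2\,\pi}\cdot\frac{1}{2\sqrt3}=\frac{\sqrt7}{18\sqrt6\,\pi}$ (as $9\cdot2=18$ and $\sqrt2\,\sqrt3=\sqrt6$), gives precisely the asserted inequality. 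Alternatively one may apply \eqref{eq3.12.Alomari2017} directly and then replace its factor $|\widehat{\mathcal{P}}(f,f)|^{1/2}$ by the bound above; both routes produce the same constant.

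There is no serious obstacle here: the estimate is a routine concatenation of results already proved. The one point that deserves a moment's attention is that the $f=g$ reduction is legitimate for Theorem \ref{thm5.Alomari2017} but \emph{not} for Theorem \ref{thm4.Alomari2017}: in the latter the change of variables forces the nondegeneracy $\phi\Gamma-\Phi\gamma\neq0$, whereas the proof of Theorem \ref{thm5.Alomari2017} proceeds through the Cauchy--Schwarz/pre-Gr\"uss route and imposes no such restriction, so the displayed bound for $|\widehat{\mathcal{P}}(g,g)|^{1/2}$ is valid as stated. What remains is the elementary constant arithmetic indicated above.
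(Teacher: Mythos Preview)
Your proof is correct and essentially identical to the paper's: the paper simply substitutes the $f=g$ case of \eqref{eq13.Alomari2017} into \eqref{eq3.12.Alomari2017}, which is exactly your ``alternative'' route, and your primary route via \eqref{eq1.Alomari2017} merely unpacks how \eqref{eq3.12.Alomari2017} was obtained in the first place. The constant arithmetic is right.
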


\begin{proof}
Substituting \eqref{eq13.Alomari2017} in
\eqref{eq3.12.Alomari2017}, we get the desired result.
\end{proof}

%==========================================================================================================
%==========================================================================================================
\section{More inequalities}\label{sec3}
%==========================================================================================================
%==========================================================================================================
\begin{theorem}
\label{lem1.Alomari2017} Let $a,b\in\mathbb{R}$ with $b>a>0$. Let
$f:\left[a,b\right]\to\mathbb{R}$ be an absolutely continuous
functions  and $g:\left[a,b\right]\to\mathbb{R}$ is Lebesgue
integrable on $\left[a,b\right]$.
\begin{enumerate}
\item If $f^{\prime}\in L^\infty\left[a,b\right]$, then
\begin{align}
\label{eq4.1.Alomari2017}
\left|{\widehat{\mathcal{P}}\left(f,g\right)}\right|
\le\frac{1}{2} \left\| {f - \ell f'} \right\|_\infty \cdot
\int_a^b {\int_a^b {\left| {x - t} \right|\left| {tg\left( x
\right) - xg\left( t \right)} \right| dtdx} }.
\end{align}

\item If $f^{\prime}\in L^p\left[a,b\right]$, $p>1$  then
\begin{align}
\label{eq4.2.Alomari2017}
\left|{\widehat{\mathcal{P}}\left(f,g\right)}\right|
\le\frac{1}{2}\left( {\frac{1}{{2q - 1}}} \right)^{\frac{1}{q}}
\left\| {f - \ell f'} \right\|_p \cdot \int_a^b {\int_a^b {\left|
{\frac{{x^q }}{{t^{q - 1} }} - \frac{{t^q }}{{x^{q - 1} }}}
\right|^{\frac{1}{q}} \left| {tg\left( x \right) - xg\left( t
\right)} \right| dtdx} }.
\end{align}

\item If $f^{\prime}\in L^1\left[a,b\right]$, then

\begin{align}
\label{eq4.3.Alomari2017}
\left|{\widehat{\mathcal{P}}\left(f,g\right)}\right|
\le\frac{1}{2}\left\| {f - \ell f'} \right\|_1 \cdot \int_a^b
{\int_a^b {\frac{{\max \left\{ {x,t} \right\}}}{{\min \left\{
{x,t} \right\}}}  \left| {tg\left( x \right) - xg\left( t \right)}
\right| dtdx} }.
\end{align}
\end{enumerate}
$\ell\left(t\right)=t$, $t\in \left[a,b\right]$.
\end{theorem}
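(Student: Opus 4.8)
The plan is to combine the symmetric double-integral representation of $\widehat{\mathcal{P}}$ with the three pointwise estimates collected in Lemma \ref{Dragomir.lemma}. Recall the identity \eqref{eq2.Alomari2017},
\begin{align*}
\widehat{\mathcal{P}}\left(f,g\right)=\frac{1}{2}\int_a^b\int_a^b \left(tf\left(x\right)-xf\left(t\right)\right)\left(tg\left(x\right)-xg\left(t\right)\right)dtdx,
\end{align*}
which was already verified in the proof of Theorem \ref{thm1.Alomari2017}. Taking moduli and using the triangle integral inequality yields
\begin{align*}
\left|{\widehat{\mathcal{P}}\left(f,g\right)}\right|\le \frac{1}{2}\int_a^b\int_a^b \left|tf\left(x\right)-xf\left(t\right)\right|\,\left|tg\left(x\right)-xg\left(t\right)\right|dtdx,
\end{align*}
so the task reduces to estimating the factor $\left|tf\left(x\right)-xf\left(t\right)\right|$ uniformly, leaving the $g$-factor inside the double integral since no regularity beyond integrability is assumed on $g$.

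Next I would invoke Lemma \ref{Dragomir.lemma}, reading $t$ and $x$ as the two points in $[a,b]$. Since $b>a>0$, the weight $\ell(t)=t$ satisfies $a\le \ell(t)\le b$ on $[a,b]$, and $f$, being absolutely continuous on a compact interval, is bounded; hence $f-\ell f'\in L^p[a,b]$ if and only if $f'\in L^p[a,b]$ for each $p\in[1,\infty]$. Consequently the three integrability hypotheses in the statement are exactly the hypotheses required to apply the corresponding branch of \eqref{eq.Dragomir.lemma}. For part (1) one uses $\left|tf(x)-xf(t)\right|\le \left\|f-\ell f'\right\|_\infty\left|x-t\right|$ and substitutes into the double integral to obtain \eqref{eq4.1.Alomari2017}; for part (2) one uses $\left|tf(x)-xf(t)\right|\le \left(\tfrac{1}{2q-1}\right)^{1/q}\left\|f-\ell f'\right\|_p\left|\tfrac{x^q}{t^{q-1}}-\tfrac{t^q}{x^{q-1}}\right|^{1/q}$ (with $\tfrac1p+\tfrac1q=1$, $p>1$) to obtain \eqref{eq4.2.Alomari2017}; and for part (3) one uses $\left|tf(x)-xf(t)\right|\le \left\|f-\ell f'\right\|_1\tfrac{\max\{x,t\}}{\min\{x,t\}}$ to obtain \eqref{eq4.3.Alomari2017}. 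In each case the $\left\|f-\ell f'\right\|$-constant and the $\left(\tfrac{1}{2q-1}\right)^{1/q}$ factor come out of the double integral, giving precisely the asserted bounds.

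There is essentially no deep obstacle here; the proof is a direct concatenation of the representation, the triangle inequality, and Lemma \ref{Dragomir.lemma}. The only points that deserve a line of justification are the book-keeping observations just mentioned: that the hypotheses on $f'$ transfer to $f-\ell f'$ through boundedness of $f$ and of $\ell$ (and boundedness of $1/\ell$ away from $0$), and that all the resulting iterated integrals are legitimate, which holds because every integrand is a nonnegative measurable function on the compact square $[a,b]^2$. Thus I expect the write-up to be short: state the identity, bound, split into the three cases, and apply the lemma in each.
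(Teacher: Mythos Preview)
Your proposal is correct and follows essentially the same route as the paper: start from the double-integral identity \eqref{eq2.Alomari2017}, apply the triangle inequality, and then invoke the three branches of Lemma~\ref{Dragomir.lemma} to bound $|tf(x)-xf(t)|$ in each case. Your added remarks on why $f'\in L^p$ is equivalent to $f-\ell f'\in L^p$ and on the legitimacy of the iterated integrals are welcome clarifications that the paper leaves implicit.
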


\begin{proof}
It is easy to observe  from Lemma \ref{Dragomir.lemma} that
\begin{enumerate}
\item If $f^{\prime}\in L^\infty\left[a,b\right]$, then
\begin{align*}
\left|{\widehat{\mathcal{P}}\left(f,g\right)}\right| &=\left|
{\int_a^b {\int_a^b {\left( {tf\left( x \right) - xf\left( t
\right)} \right)\left( {tg\left( x \right) - xg\left( t \right)}
\right)dtdx} } } \right|
\\
&\le  \frac{1}{2} \int_a^b {\int_a^b {\left| {tf\left( x \right) -
xf\left( t \right)} \right| \left| {tg\left( x \right) - xg\left(
t \right)} \right| dtdx} }
\\
&\le  \frac{1}{2} \left\| {f - \ell f'} \right\|_\infty \int_a^b
{\int_a^b {\left| {x - t} \right|\left| {tg\left( x \right) -
xg\left( t \right)} \right| dtdx} },
\end{align*}
which proves \eqref{eq4.1.Alomari2017}.\\

\item If $f^{\prime}\in L^p\left[a,b\right]$, $p>1$  then
\begin{align*}
\left|{\widehat{\mathcal{P}}\left(f,g\right)}\right| &=\left|
{\int_a^b {\int_a^b {\left( {tf\left( x \right) - xf\left( t
\right)} \right)\left( {tg\left( x \right) - xg\left( t \right)}
\right)dtdx} } } \right|
\\
&\le  \frac{1}{2} \int_a^b {\int_a^b {\left| {tf\left( x \right) -
xf\left( t \right)} \right| \left| {tg\left( x \right) - xg\left(
t \right)} \right| dtdx} }
\\
&\le \frac{1}{2} \left( {\frac{1}{{2q - 1}}} \right)^{\frac{1}{q}}
\left\| {f - \ell f'} \right\|_p \cdot \int_a^b {\int_a^b {\left|
{\frac{{x^q }}{{t^{q - 1} }} - \frac{{t^q }}{{x^{q - 1} }}}
\right|^{\frac{1}{q}} \left| {tg\left( x \right) - xg\left( t
\right)} \right| dtdx} },
\end{align*}
for $p,q>1$ with $\frac{1}{p}+\frac{1}{q}=1$, which proves \eqref{eq4.2.Alomari2017}.\\

\item If $f^{\prime}\in L^1\left[a,b\right]$, then
\begin{align*}
\left|{\widehat{\mathcal{P}}\left(f,g\right)}\right| &=\left|
{\int_a^b {\int_a^b {\left( {tf\left( x \right) - xf\left( t
\right)} \right)\left( {tg\left( x \right) - xg\left( t \right)}
\right)dtdx} } } \right|
\\
&\le  \frac{1}{2} \int_a^b {\int_a^b {\left| {tf\left( x \right) -
xf\left( t \right)} \right| \left| {tg\left( x \right) - xg\left(
t \right)} \right| dtdx} }
\\
&\le  \frac{1}{2}\left\| {f - \ell f'} \right\|_1 \cdot \int_a^b
{\int_a^b {\frac{{\max \left\{ {x,t} \right\}}}{{\min \left\{
{x,t} \right\}}}  \left| {tg\left( x \right) - xg\left( t \right)}
\right| dtdx} },
\end{align*}
which proves \eqref{eq4.3.Alomari2017},\\
\end{enumerate}

and this end the proof of the theorem.
\end{proof}

\begin{thm}
\label{thm8.Alomari2017} Let $a,b\in\mathbb{R}$ with $b>a>0$. Let
$f:\left[a,b\right]\to\mathbb{R}$ be an absolutely continuous
functions such that $f^{\prime}\in L_\infty\left[a,b\right]$ and
$g:\left[a,b\right]\to\mathbb{R}$ is measurable on
$\left[a,b\right]$  such that $\gamma \le g\left(s\right)\le
\Gamma$, for some  real numbers $\gamma,\Gamma $ and all $s\in
\left[a,b\right]$, then the inequality
\begin{align}
\label{eq4.4.Alomari2017}\left|{\widehat{\mathcal{P}}\left(f,g\right)}\right|
\le \frac{1}{12 }\left(b-a\right)^2 \left\|{f-\ell
f^{\prime}}\right\|_{\infty}\left[ {2\left( {b^3  - a^3 }
\right)\left( {b - a} \right)\left( {\Gamma ^2  + \gamma ^2 }
\right) - 3\Gamma \gamma \left( {b^2  - a^2 } \right)^2 } \right]
\end{align}
holds,  where $\ell\left(t\right)=t$, $t\in \left[a,b\right]$.
\end{thm}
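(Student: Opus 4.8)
The plan is to combine the first assertion of Theorem~\ref{lem1.Alomari2017} (the $L^\infty$ case) with a direct bound on the remaining double integral using the hypothesis $\gamma\le g\le\Gamma$. Starting from \eqref{eq4.1.Alomari2017}, it remains to estimate
\[
\int_a^b\int_a^b \left|x-t\right|\,\left|tg(x)-xg(t)\right|\,dt\,dx.
\]
First I would split this product using the Cauchy--Schwarz inequality for the double integral, writing it as the product of $\left(\int_a^b\int_a^b |x-t|^2\,dt\,dx\right)^{1/2}$ and $\left(\int_a^b\int_a^b |tg(x)-xg(t)|^2\,dt\,dx\right)^{1/2}$. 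The first factor is a routine elementary integral over the square $[a,b]^2$; the second factor is exactly $2\,\widehat{\mathcal P}(g,g)$ by the identity \eqref{eq2.Alomari2017} specialized to $f=g$ (the same computation displayed in the proof of Theorem~\ref{thm1.Alomari2017}).

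Next I would invoke the bound on $\widehat{\mathcal P}(g,g)$ already recorded in the remark following Theorem~\ref{thm5.Alomari2017}: since $\gamma\le g\le\Gamma$ on $[a,b]$, one has
\[
\widehat{\mathcal P}(g,g)\le \frac{1}{12}\left[2\left(b^3-a^3\right)(b-a)\left(\Gamma^2+\gamma^2\right)-3\Gamma\gamma\left(b^2-a^2\right)^2\right],
\]
which comes from estimating $|tg(x)-xg(t)|\le|\Gamma t-\gamma x|$ inside the quadratic form and then integrating $|\Gamma t-\gamma x|^2$ over $[a,b]^2$. Substituting this into the Cauchy--Schwarz split, together with the value of $\int_a^b\int_a^b|x-t|^2\,dt\,dx$, and then pulling the factor $\tfrac12\|f-\ell f'\|_\infty$ out front, should assemble into the claimed inequality \eqref{eq4.4.Alomari2017}.

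The only genuinely delicate point is matching the constants: the target right-hand side has the full bracketed expression $2(b^3-a^3)(b-a)(\Gamma^2+\gamma^2)-3\Gamma\gamma(b^2-a^2)^2$ appearing to the \emph{first} power, not the square root, which means the Cauchy--Schwarz route through $\widehat{\mathcal P}(g,g)^{1/2}$ must be paired with a factor that is \emph{also} proportional to that same bracket. I expect that instead of Cauchy--Schwarz one should bound $|tg(x)-xg(t)|\le \Gamma t-\gamma x$ (when that is nonnegative, i.e. using the one-sided estimate from the proof of Theorem~\ref{thm5.Alomari2017}) directly in \eqref{eq4.1.Alomari2017}, giving $\tfrac12\|f-\ell f'\|_\infty\int_a^b\int_a^b|x-t|\,|\Gamma t-\gamma x|\,dt\,dx$, and then the main obstacle becomes the explicit evaluation of $\int_a^b\int_a^b|x-t|\,|\Gamma t-\gamma x|\,dt\,dx$ and showing it equals $\tfrac{1}{6}(b-a)^2\big[2(b^3-a^3)(b-a)(\Gamma^2+\gamma^2)-3\Gamma\gamma(b^2-a^2)^2\big]$ — a careful but elementary case analysis on the signs of $x-t$ and $\Gamma t-\gamma x$ over the square. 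Once that identity is verified, the theorem follows immediately.
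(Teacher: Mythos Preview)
Your instinct that something is off with the exponent is exactly right, but the resolution is not a cleverer integral identity: the paper's own proof in fact ends with the bracket raised to the power $\tfrac12$, not to the first power. The argument there is simply to feed Dragomir's bound
\[
\left|\widehat{\mathcal P}(f,f)\right|\le \frac{(b-a)^4}{12}\,\|f-\ell f'\|_\infty^2
\]
(inequality \eqref{eq1.Dragomir2015}) into the estimate \eqref{eq3.12.Alomari2017} of Theorem~\ref{thm5.Alomari2017}, which gives
\[
\left|\widehat{\mathcal P}(f,g)\right|\le \frac{(b-a)^2}{12}\,\|f-\ell f'\|_\infty\Bigl[2(b^3-a^3)(b-a)(\Gamma^2+\gamma^2)-3\Gamma\gamma(b^2-a^2)^2\Bigr]^{1/2}.
\]
The exponent $1$ in the displayed statement \eqref{eq4.4.Alomari2017} is a typographical slip; a quick homogeneity check (take $g\equiv c$ constant, so $\Gamma=\gamma=c$, and compare powers of $c$ and of $(b-a)$ on the two sides) confirms that only the $\tfrac12$-power version can be correct.

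Your first route --- start from \eqref{eq4.1.Alomari2017}, apply Cauchy--Schwarz on the square $[a,b]^2$, recognise one factor as $2\,\widehat{\mathcal P}(g,g)$, and bound it by the remark after Theorem~\ref{thm5.Alomari2017} --- is a perfectly valid alternative and lands on exactly the same $\tfrac12$-power inequality. It is essentially a repackaging of the paper's argument: using $\int_a^b\!\int_a^b|x-t|^2\,dt\,dx=(b-a)^4/6$ in place of the Dragomir bound on $\widehat{\mathcal P}(f,f)$ amounts to the same Cauchy--Schwarz split, just with the roles of the two factors organised differently.

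Your second route, however, is a genuine dead end. The identity you hope for,
\[
\int_a^b\!\int_a^b |x-t|\,|\Gamma t-\gamma x|\,dt\,dx \;=\; \tfrac{1}{6}(b-a)^2\Bigl[2(b^3-a^3)(b-a)(\Gamma^2+\gamma^2)-3\Gamma\gamma(b^2-a^2)^2\Bigr],
\]
cannot hold: the left-hand side is homogeneous of degree $1$ in $(\Gamma,\gamma)$ and degree $4$ in $(a,b)$, while the right-hand side is of degree $2$ and $6$ respectively. So no amount of case analysis on signs will produce that equality, and the stated first-power bound is simply not what the paper proves.
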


\begin{proof}
From \eqref{eq1.Dragomir2015}, we have
\begin{align*}
 \left|{\widehat{\mathcal{P}}\left(f,f\right)}\right| \le
\frac{\left(b-a\right)^4}{12} \left\|{f-\ell
f^{\prime}}\right\|^2_{\infty}.
\end{align*}
Substituting in \eqref{eq3.12.Alomari2017}, we have
\begin{align*}
 \left|{\widehat{\mathcal{P}}\left(f,g\right)}\right|&\le
\frac{1}{2\sqrt{3}}\left|{\widehat{\mathcal{P}}\left(f,f\right)}\right|^{\frac{1}{2}}
\left[ {2\left( {b^3  - a^3 } \right)\left( {b - a} \right)\left(
{\Gamma ^2  + \gamma ^2 } \right) - 3\Gamma \gamma \left( {b^2  -
a^2 } \right)^2 } \right]^{\frac{1}{2}}
\\
&\le \frac{1}{12}\left(b-a\right)^2 \left\|{f-\ell
f^{\prime}}\right\|_{\infty}\left[ {2\left( {b^3  - a^3 }
\right)\left( {b - a} \right)\left( {\Gamma ^2  + \gamma ^2 }
\right) - 3\Gamma \gamma \left( {b^2  - a^2 } \right)^2 }
\right]^{\frac{1}{2}},
\end{align*}
and this proves the required inequality.
\end{proof}

\begin{thm}
\label{thm10.Alomari2017} Let $a,b\in\mathbb{R}$ with $b>a>0$. Let
$f,g:\left[a,b\right]\to\mathbb{R}$ be two absolutely continuous
functions such that $f^{\prime}\in L_\infty\left[a,b\right]$ and
$g^{\prime}\in L_1\left[a,b\right]$, then the inequality then we
have
\begin{align}
\label{eq4.6.Alomari2017}\left|{\widehat{\mathcal{P}}\left(f,g\right)}\right|
\le\frac{1}{2}\left(b^2 a - \frac{{a^3  + 8b^3 }}{9} +
\frac{2}{3}b^3 \ln \left( {\frac{b}{a}} \right) \right)\left\| {f
- \ell f^{\prime}} \right\|_\infty \left\| {g - \ell g^{\prime}}
\right\|_1
\end{align}
where $\ell\left(t\right)=t$, $t\in \left[a,b\right]$.

\end{thm}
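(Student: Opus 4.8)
The plan is to derive this bound by combining the first estimate of Theorem~\ref{lem1.Alomari2017} with the $L^1$ case of Lemma~\ref{Dragomir.lemma}, and then to evaluate explicitly the double integral that remains.

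First I would invoke part (1) of Theorem~\ref{lem1.Alomari2017}, which (since $f$ is absolutely continuous with $f'\in L_\infty[a,b]$ and $g$, being absolutely continuous, is Lebesgue integrable) already gives
\begin{align*}
\left|{\widehat{\mathcal{P}}\left(f,g\right)}\right| \le \frac{1}{2}\left\|{f-\ell f'}\right\|_\infty \int_a^b \int_a^b \left|{x-t}\right|\,\left|{tg\left(x\right)-xg\left(t\right)}\right|\,dtdx.
\end{align*}
Next, because $g'\in L_1[a,b]$, the third branch of Lemma~\ref{Dragomir.lemma} applied to $g$ gives the pointwise bound $\left|{tg(x)-xg(t)}\right|\le \left\|{g-\ell g'}\right\|_1\,\dfrac{\max\{x,t\}}{\min\{x,t\}}$ for all $x,t\in[a,b]$. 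Substituting this into the previous display yields
\begin{align*}
\left|{\widehat{\mathcal{P}}\left(f,g\right)}\right| \le \frac{1}{2}\left\|{f-\ell f'}\right\|_\infty\left\|{g-\ell g'}\right\|_1 \int_a^b \int_a^b \left|{x-t}\right|\,\frac{\max\{x,t\}}{\min\{x,t\}}\,dtdx.
\end{align*}

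It then remains only to show that $I:=\displaystyle\int_a^b\int_a^b |x-t|\,\frac{\max\{x,t\}}{\min\{x,t\}}\,dtdx$ equals $b^2a-\frac{a^3+8b^3}{9}+\frac23 b^3\ln\frac ba$. Since the integrand is symmetric in $x$ and $t$ and vanishes on the diagonal, $I=2\int_a^b\int_a^x (x-t)\frac xt\,dt\,dx$. The inner integral evaluates to $x^2\ln(x/a)-x^2+ax$, and then integrating over $x\in[a,b]$---using integration by parts on $\int_a^b x^2\ln(x/a)\,dx=\frac{b^3}{3}\ln\frac ba-\frac{b^3-a^3}{9}$---produces exactly the claimed value of $I$. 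Plugging this back gives \eqref{eq4.6.Alomari2017}.

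This argument involves no genuine difficulty; the one place demanding a little care is the evaluation of $I$, namely correctly splitting the square $[a,b]^2$ across its diagonal (picking up the factor $2$) and carrying out the logarithmic integration by parts. I expect this elementary computation to be the only step that takes real work.
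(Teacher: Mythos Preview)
Your proposal is correct and follows essentially the same route as the paper: the paper also arrives at the bound $\tfrac12\|f-\ell f'\|_\infty\|g-\ell g'\|_1\int_a^b\int_a^b|x-t|\,\frac{\max\{x,t\}}{\min\{x,t\}}\,dt\,dx$ by combining the $L^\infty$ and $L^1$ cases of Lemma~\ref{Dragomir.lemma} (you reach the same point via the intermediate Theorem~\ref{lem1.Alomari2017}, which is merely a repackaging of that lemma), and then evaluates the same double integral. The only cosmetic difference is that the paper splits the inner integral as $\int_a^t+\int_t^b$ and integrates both pieces, whereas you exploit the symmetry to write $I=2\int_a^b\int_a^x(\cdots)\,dt\,dx$; the computations are equivalent and yield the same value.
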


\begin{proof}
Using \eqref{eq.Dragomir.lemma}, we have
\begin{align}
 \left|{\widehat{\mathcal{P}}\left(f,g\right)}\right|
&=\left| {\int_a^b {\int_a^b {\left( {tf\left( x \right) -
xf\left( t \right)} \right)\left( {tg\left( x \right) - xg\left( t
\right)} \right)dx } dt} } \right|
\nonumber \\
&\le  \frac{1}{2} \int_a^b {\int_a^b {\left| {tf\left( x \right) -
xf\left( t \right)} \right| \left| {tg\left( x \right) - xg\left(
t \right)} \right| dx}dt }
\nonumber \\
&\le  \frac{1}{2} \left\| {f - \ell f'} \right\|_\infty \left\| {g
- \ell g'} \right\|_1\int_a^b {\int_a^b {\left| {x - t}
\right|\cdot \frac{{\max \left\{ {x,t} \right\}}}{{\min \left\{
{x,t} \right\}}}   dx} dt}.  \label{eq4.7.Alomari2017}
\end{align}
Now,
\begin{align*}
 &\int_a^b {\int_a^b {\left| {x - t} \right| \cdot \frac{{\max \left\{ {x,t} \right\}}}{{\min \left\{
{x,t} \right\}}} dxdt} }  \\
  &= \int_a^b {\left[ {\int_a^t {\left| {x - t} \right| \cdot \frac{{\max \left\{ {x,t} \right\}}}{{\min \left\{
{x,t} \right\}}}dx}  + \int_t^b {\left| {x - t} \right| \cdot
\frac{{\max \left\{ {x,t} \right\}}}{{\min \left\{
{x,t} \right\}}}dx} } \right]dt}  \\
  &= \int_a^b {\left[ {\int_a^t {\left( {t - x} \right) \cdot \frac{t}{x}dx}  + \int_t^b {\left( {x - t} \right) \cdot \frac{x}{t}dx} } \right]dt}  \\
  &= \int_a^b {\left[ {
t^2 \left( {\ln \left( t \right) - \ln \left( a \right)} \right) -
t\left( {t - a} \right) + \frac{{b^3 }}{{3t}} - \frac{{t^2 }}{3} -
\frac{{b^2 }}{2} + \frac{{t^2 }}{2}} \right]dt}  \\
  &=
b^2 a - \frac{{a^3  + 8b^3 }}{9} + \frac{2}{3}b^3 \ln \left(
{\frac{b}{a}} \right),
 \end{align*}
substituting in \eqref{eq4.7.Alomari2017}, we get the desired
result.
\end{proof}

\begin{thm}
\label{thm12.Alomari2017} Let $a,b\in\mathbb{R}$ with $b>a>0$. Let
$f:\left[a,b\right]\to\mathbb{R}$ be an absolutely continuous
functions such that $f^{\prime}\in L_1\left[a,b\right]$ and
$g:\left[a,b\right]\to\mathbb{R}$ is measurable  on
$\left[a,b\right]$ such that $\gamma \le g\left(s\right)\le
\Gamma$ for some real numbers $\gamma,\Gamma $, $s\in
\left[a,b\right]$, then the inequality
\begin{multline*}
\left|{\widehat{\mathcal{P}}\left(f,g\right)}\right|
\\
\le \frac{1}{6\sqrt{2}}\left(\frac{{2b^3  + a^3  - 3ab^2 }}{{
a}}\right)^{\frac{1}{2}} \left\|{f-\ell f^{\prime}}\right\|_{1}
\left[ {2\left( {b^3  - a^3 } \right)\left( {b - a} \right)\left(
{\Gamma ^2  + \gamma ^2 } \right) - 3\Gamma \gamma \left( {b^2  -
a^2 } \right)^2 } \right]^{\frac{1}{2}}
\end{multline*}
holds,  where $\ell\left(t\right)=t$, $t\in \left[a,b\right]$.
\end{thm}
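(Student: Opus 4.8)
The plan is to reduce everything to the pre-Gr\"uss--type estimate of Theorem~\ref{thm5.Alomari2017} and then to bound $\widehat{\mathcal{P}}(f,f)$ through the $L^1$ branch of Lemma~\ref{Dragomir.lemma}. Since $f$ is absolutely continuous (hence Lebesgue integrable) and $g$ is measurable with $\gamma\le g\le\Gamma$, the hypotheses of Theorem~\ref{thm3.Alomari2017}, and therefore of Theorem~\ref{thm5.Alomari2017}, are satisfied, so that
\begin{align*}
\left|{\widehat{\mathcal{P}}\left(f,g\right)}\right|\le \frac{1}{2\sqrt{3}}\left|{\widehat{\mathcal{P}}\left(f,f\right)}\right|^{\frac{1}{2}}\left[2\left(b^3-a^3\right)\left(b-a\right)\left(\Gamma^2+\gamma^2\right)-3\Gamma\gamma\left(b^2-a^2\right)^2\right]^{\frac{1}{2}}.
\end{align*}
Thus it suffices to dominate $\left|{\widehat{\mathcal{P}}\left(f,f\right)}\right|$ by a constant multiple of $\left\|{f-\ell f'}\right\|_1^2$.

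For that, I would start from the representation $\widehat{\mathcal{P}}\left(f,f\right)=\tfrac{1}{2}\int_a^b\int_a^b\left(tf\left(x\right)-xf\left(t\right)\right)^2\,dtdx$ already verified in the proof of Theorem~\ref{thm1.Alomari2017} (which also shows $\widehat{\mathcal{P}}\left(f,f\right)\ge0$), and insert the third estimate of Lemma~\ref{Dragomir.lemma}, namely $\left|tf\left(x\right)-xf\left(t\right)\right|\le \left\|{f-\ell f'}\right\|_1\,\dfrac{\max\{x,t\}}{\min\{x,t\}}$. This gives
\begin{align*}
\widehat{\mathcal{P}}\left(f,f\right)\le \frac{1}{2}\left\|{f-\ell f'}\right\|_1^2\int_a^b\int_a^b\left(\frac{\max\{x,t\}}{\min\{x,t\}}\right)^2 dtdx.
\end{align*}
The remaining double integral is evaluated by symmetry in $x$ and $t$: it equals $2\int_a^b\int_a^t (t/x)^2\,dx\,dt=2\int_a^b\left(\tfrac{t^2}{a}-t\right)dt=\tfrac{2(b^3-a^3)}{3a}-(b^2-a^2)=\dfrac{2b^3+a^3-3ab^2}{3a}$, a nonnegative quantity since $2b^3+a^3-3ab^2=(b-a)^2(2b+a)$.

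Putting the two displays together yields $\left|{\widehat{\mathcal{P}}\left(f,f\right)}\right|^{1/2}\le \frac{1}{\sqrt{6}}\left\|{f-\ell f'}\right\|_1\left(\dfrac{2b^3+a^3-3ab^2}{a}\right)^{1/2}$; substituting this into the bound from Theorem~\ref{thm5.Alomari2017} and simplifying the constant via $\frac{1}{2\sqrt{3}}\cdot\frac{1}{\sqrt{6}}=\frac{1}{6\sqrt{2}}$ produces exactly the claimed inequality. The only genuinely computational step is the evaluation of $\int_a^b\int_a^b\left(\max\{x,t\}/\min\{x,t\}\right)^2\,dtdx$; the rest is a concatenation of already-established estimates, so I expect no real obstacle. (One may also check consistency on $f(t)=ct-1$, for which $f-\ell f'\equiv-1$, so $\left\|{f-\ell f'}\right\|_1=b-a$, and the resulting one-sided bound stays above the exact value of $\widehat{\mathcal{P}}(f,f)$ for such $f$.)
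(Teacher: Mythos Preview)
Your proposal is correct and follows essentially the same route as the paper: apply Theorem~\ref{thm5.Alomari2017} and then bound $\widehat{\mathcal{P}}(f,f)$ via the $L^1$ case of Lemma~\ref{Dragomir.lemma}. The only cosmetic difference is that the paper invokes the ready-made estimate \eqref{eq4.Dragomir2015} (which, with $g=f$, gives exactly your $\frac{2b^3+a^3-3ab^2}{6a}\|f-\ell f'\|_1^2$ since $(b-a)^2(a+2b)=2b^3+a^3-3ab^2$), whereas you re-derive it by explicitly evaluating $\int_a^b\int_a^b(\max\{x,t\}/\min\{x,t\})^2\,dtdx$.
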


\begin{proof}
From \eqref{eq4.Dragomir2015}, we have
\begin{align*}
\left|{\widehat{\mathcal{P}}\left(f,f\right)}\right| \le
\frac{{2b^3  + a^3  - 3ab^2 }}{{6a}}\left\| {f - \ell f'}
\right\|_1^2.
\end{align*}
Substituting in \eqref{eq3.12.Alomari2017}, we have
\begin{align*}
 \left|{\widehat{\mathcal{P}}\left(f,g\right)}\right|&\le
\frac{1}{2\sqrt{3}}\left|{\widehat{\mathcal{P}}\left(f,f\right)}\right|^{\frac{1}{2}}
\left[ {2\left( {b^3  - a^3 } \right)\left( {b - a} \right)\left(
{\Gamma ^2  + \gamma ^2 } \right) - 3\Gamma \gamma \left( {b^2  -
a^2 } \right)^2 } \right]^{\frac{1}{2}}
\\
&\le \frac{1}{6\sqrt{2}}\left(\frac{{2b^3  + a^3  - 3ab^2 }}{{
a}}\right)^{\frac{1}{2}} \left\|{f-\ell f^{\prime}}\right\|_{1}
\\
&\qquad\times\left[ {2\left( {b^3  - a^3 } \right)\left( {b - a}
\right)\left( {\Gamma ^2  + \gamma ^2 } \right) - 3\Gamma \gamma
\left( {b^2  - a^2 } \right)^2 } \right]^{\frac{1}{2}},
\end{align*}
and this proves the theorem.
\end{proof}
%==========================================================================================================
%==========================================================================================================
\section{Generalizations, Remarks and Conclusion}\label{sec4}
%==========================================================================================================
%==========================================================================================================
\subsection{Boggio MVT}The MVT of  Pompeiu was generalized by Boggio
\cite{B} in 1947 (see also, p.,92; \cite{Sahoo}), where he proved
the following generalization of PMVT:
\begin{theorem}
\label{thm.Boggio}For every real valued functions $f$ and $h$
differentiable on an interval $[a,b]$ not containing $0$ and for
all pairs $x_1\ne x_2$ in $[a,b]$ there exists a point $\xi\in
(x_1,x_2)$ such that
\begin{align}
\label{eq.Boggio}\frac{{h\left( {x_1 } \right)f\left( {x_2 }
\right) -h\left( {x_2 } \right)f\left( {x_1 } \right)}}{{h\left(
{x_1 } \right) - h\left( {x_2 } \right)}} = f\left( \xi  \right) -
\frac{{h\left( \xi  \right)}}{{h^{\prime}\left( \xi
\right)}}f^{\prime}\left( \xi \right)\text{''.}
\end{align}
\end{theorem}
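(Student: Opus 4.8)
The plan is to deduce \eqref{eq.Boggio} from the Cauchy (generalized) mean value theorem for derivatives, in exactly the same way that PMVT follows by specializing to $h(x)=x$. Fix the pair $x_1\ne x_2$; since the left-hand side of \eqref{eq.Boggio} is unchanged when $x_1$ and $x_2$ are interchanged (numerator and denominator each change sign), I may assume $x_1<x_2$. On $[x_1,x_2]$ I would introduce the auxiliary functions
\[
F(x)=\frac{f(x)}{h(x)},\qquad G(x)=\frac{1}{h(x)},
\]
which are well defined and differentiable there because $h$ does not vanish on $[a,b]$ — this is the role of the hypothesis $0\notin[a,b]$ in the special case $h=\ell$. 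Their derivatives are $F'(x)=\frac{f'(x)h(x)-f(x)h'(x)}{h(x)^2}$ and $G'(x)=-\frac{h'(x)}{h(x)^2}$.

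Next I would apply Cauchy's mean value theorem to the pair $(F,G)$ on $[x_1,x_2]$, obtaining a point $\xi\in(x_1,x_2)$ with
\[
\frac{F(x_2)-F(x_1)}{G(x_2)-G(x_1)}=\frac{F'(\xi)}{G'(\xi)}.
\]
This step needs $G'$ to be non-vanishing on $(x_1,x_2)$, i.e.\ $h'\ne 0$ there; that condition also forces $h(x_1)\ne h(x_2)$, so the quotient on the left of \eqref{eq.Boggio} is meaningful. Verifying these hypotheses — non-vanishing of $h$ and of $h'$ on the relevant sub-interval — is the only genuine point of care in the argument, and it is precisely the set of assumptions implicit in the way \eqref{eq.Boggio} is written.

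Finally I would simplify both sides. A direct computation gives
\[
F(x_2)-F(x_1)=\frac{f(x_2)h(x_1)-f(x_1)h(x_2)}{h(x_1)h(x_2)},\qquad G(x_2)-G(x_1)=\frac{h(x_1)-h(x_2)}{h(x_1)h(x_2)},
\]
so the left-hand quotient above equals $\frac{h(x_1)f(x_2)-h(x_2)f(x_1)}{h(x_1)-h(x_2)}$, which is exactly the left-hand side of \eqref{eq.Boggio}. For the right-hand side,
\[
\frac{F'(\xi)}{G'(\xi)}=\frac{f'(\xi)h(\xi)-f(\xi)h'(\xi)}{-h'(\xi)}=f(\xi)-\frac{h(\xi)}{h'(\xi)}\,f'(\xi),
\]
and comparing the two displays yields \eqref{eq.Boggio}. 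The remaining effort is entirely routine algebraic bookkeeping; no inequality or limiting argument is involved, and the main obstacle is simply making the standing regularity assumptions on $h$ explicit so that Cauchy's theorem applies.
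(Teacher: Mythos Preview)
Your argument is correct and is essentially the same as the paper's. The paper does not prove Theorem~\ref{thm.Boggio} itself (it is cited from Boggio) but immediately afterwards proves a revised version by applying Rolle's theorem to the auxiliary function
\[
x\longmapsto \bigl[h(x_1)-h(x_2)\bigr]\frac{f(x)}{h(x)}-\frac{1}{h(x)}\bigl[h(x_1)f(x_2)-h(x_2)f(x_1)\bigr],
\]
which is precisely the linear combination $\bigl[h(x_1)-h(x_2)\bigr]F(x)-\bigl[h(x_1)f(x_2)-h(x_2)f(x_1)\bigr]G(x)$ of your $F=f/h$ and $G=1/h$; applying Rolle to this combination is exactly the standard derivation of Cauchy's mean value theorem that you invoke directly. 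Your version is slightly more streamlined, and you are also more explicit than the original statement about the standing hypotheses $h\ne 0$ and $h'\ne 0$ needed to make the argument go through---the paper makes the same observation when formulating its revised theorem via the class~$\mathcal{A}$.
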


 As we notice, Boggio generalization of PMVT    deals with real functions
defined on   real intervals  not containing $`0$'. The natural
question is:   Is it necessarily  to exclude $`0$' from
$\left[a,b\right]$ in the Boggio generalization?.

The answer is not necessary to exclude $`0$' from
$\left[a,b\right]$. For example, let $h\left(x\right)= x^2-4x+4$,
$x\in \left[-1,1\right]$. Clearly, $h$ is differentiable and
$h\left(x\right)\ne0\ne h^{\prime}\left(x\right)$ for all
$x\in\left[-1,1\right]$ and $0\in \left[-1,1\right]$. The function
$h$ satisfies the assumptions of Theorem \ref{thm.Boggio}, for
instance, choose $f\left(x\right)=x$, $x\in \left[-1,1\right]$.
Applying \eqref{eq.Boggio}, for $x_1=-1$ and $x_2=1$, we get
\begin{align*}
\frac{{9\left( 1 \right) - 1\left( { - 1} \right)}}{{9 - 1}} =
\frac{5}{4} = \xi  - \frac{{\xi ^2  - 4\xi  + 4}}{{2\xi  - 4}}.
\end{align*}
Solving for $\xi$, we get $\xi = \frac{1}{2} \in
\left(-1,1\right)$. But $0\in \left[-1,1\right]$\,\,(?)!. Which
means that it's not necessary   to exclude $`0$' from
$\left[a,b\right]$.

To deal with more large class of functions and intervals, we need
to revise Theorem \ref{thm.Boggio}. For more details about several
and various type of MVTs
and their generalization(s) we refer the reader to \cite{Sahoo}.\\

Let $a,b\in \mathbb{R}$, and $I$ be a real interval such that
$a,b$ belong to $I^{\circ}$; the interior of $I$ with $a<b$. Let
$\mathcal{A}$ be the set of all real intervals $I$ for which
neither $h\left(x\right)$ nor $h^{\prime}\left(x\right)$ is ever
zero on $\mathcal{A}$, where $h:I\to\mathbb{R}$ is a real valued
differentiable function. In symbols, we may write
\begin{align*}
\mathcal{A} :=\{{[a,b]: a,b \in I^{\circ}, h\left(x\right)\ne 0
\ne h^{\prime}\left(x\right), \forall x\in [a,b]}\}.
\end{align*}
In what follows, we revise Theorem \ref{thm.Boggio} and present a
new independent proof.
\begin{theorem}
Let $[a,b] \in \mathcal{A}$. For every real valued differentiable
functions $f$ and $h$ defined  on  $[a,b]$ and  all distinct pairs
$x_1, x_2 \in [a,b]$ with $h\left( {x_1 } \right) \ne h\left( {x_2
} \right)$, there exists a point $\xi\in (x_1,x_2)$ such that
\eqref{eq.Boggio} holds.
\end{theorem}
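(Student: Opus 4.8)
The plan is to mimic the classical proof of the Pompeiu mean value theorem, but applied to the pair of functions $\frac{f}{h}$ and $\frac{1}{h}$ rather than $\frac{f}{x}$ and $\frac{1}{x}$. The hypothesis $[a,b]\in\mathcal{A}$ guarantees that both $h$ and $h'$ are nonvanishing on $[a,b]$, so in particular $\frac1h$ and $\frac fh$ are well-defined and differentiable there, and $\left(\frac1h\right)' = -\frac{h'}{h^2}$ is also nonvanishing on $[a,b]$. This is exactly what is needed to invoke the Cauchy (generalized) mean value theorem.

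\medskip

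First I would fix distinct $x_1,x_2\in[a,b]$ with $h(x_1)\ne h(x_2)$ and, without loss of generality, assume $x_1<x_2$. Define $F(x)=\dfrac{f(x)}{h(x)}$ and $G(x)=\dfrac{1}{h(x)}$ on $[x_1,x_2]$. Both are differentiable on $[a,b]\supseteq[x_1,x_2]$, and $G'(x)=-\dfrac{h'(x)}{h(x)^2}\ne 0$ for every $x\in[x_1,x_2]$ since neither $h$ nor $h'$ vanishes; in particular, by Rolle's theorem $G(x_1)\ne G(x_2)$ (which is also immediate from $h(x_1)\ne h(x_2)$). Hence the Cauchy mean value theorem applies and yields a point $\xi\in(x_1,x_2)$ with
\begin{align*}
\frac{F(x_2)-F(x_1)}{G(x_2)-G(x_1)} = \frac{F'(\xi)}{G'(\xi)}.
\end{align*}

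\medskip

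The remaining work is purely algebraic simplification. The left-hand side equals
\begin{align*}
\frac{\dfrac{f(x_2)}{h(x_2)}-\dfrac{f(x_1)}{h(x_1)}}{\dfrac{1}{h(x_2)}-\dfrac{1}{h(x_1)}}
= \frac{h(x_1)f(x_2)-h(x_2)f(x_1)}{h(x_1)-h(x_2)},
\end{align*}
after clearing the common denominator $h(x_1)h(x_2)$ from numerator and denominator. For the right-hand side, $F'(\xi)=\dfrac{f'(\xi)h(\xi)-f(\xi)h'(\xi)}{h(\xi)^2}$ and $G'(\xi)=-\dfrac{h'(\xi)}{h(\xi)^2}$, so
\begin{align*}
\frac{F'(\xi)}{G'(\xi)} = -\frac{f'(\xi)h(\xi)-f(\xi)h'(\xi)}{h'(\xi)} = f(\xi)-\frac{h(\xi)}{h'(\xi)}f'(\xi),
\end{align*}
where the division by $h'(\xi)$ is legitimate precisely because $[a,b]\in\mathcal{A}$ forces $h'(\xi)\ne 0$. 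Equating the two sides gives \eqref{eq.Boggio}, completing the proof.

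\medskip

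I do not anticipate a serious obstacle: the only subtlety is making sure every denominator appearing in the argument ($h(x_1)$, $h(x_2)$, $h(\xi)$, and $h'(\xi)$) is nonzero, and this is exactly the content of the membership $[a,b]\in\mathcal{A}$, which is what distinguishes this revised statement from Boggio's original (where one artificially excludes $0$ from the interval). The one place to be slightly careful is the verification that $G(x_1)\ne G(x_2)$, so that the quotient on the left is meaningful and the Cauchy mean value theorem is applicable in its quotient form; this follows either directly from $h(x_1)\ne h(x_2)$ or from the non-vanishing of $G'$ via Rolle.
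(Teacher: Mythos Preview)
Your proof is correct and essentially coincides with the paper's. The only cosmetic difference is that you invoke the Cauchy mean value theorem for the pair $F=f/h$, $G=1/h$ as a black box, whereas the paper unfolds that very argument: it applies Rolle's theorem to the auxiliary function
\[
x\mapsto [h(x_1)-h(x_2)]\frac{f(x)}{h(x)}-\frac{1}{h(x)}\bigl[h(x_1)f(x_2)-h(x_2)f(x_1)\bigr],
\]
which is, up to the constant factor $h(x_1)h(x_2)$, exactly the standard Rolle auxiliary $[G(x_2)-G(x_1)]F(x)-[F(x_2)-F(x_1)]G(x)$ used to prove Cauchy's MVT. So the two arguments are the same at the level of ideas.
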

\begin{proof}
Let $x_1,x_2$ be any two distinct points in $[a,b]$ with $x_1<x_2$
and $h\left( {x_1 } \right) \ne h\left( {x_2 } \right)$. Define
the function $F:[x_1,x_2]\to \mathbb{R}$, given by
\begin{align*}
F\left( x \right) = \left[ {h\left( {x_1 } \right) - h\left( {x_2
} \right)} \right]\frac{{f\left( x \right)}}{{h\left( x \right)}}
- \frac{1}{{h\left( x \right)}}\left[ {h\left( {x_1 }
\right)f\left( {x_2 } \right) - h\left( {x_2 } \right)f\left( {x_1
} \right)} \right].
\end{align*}
Clearly, $F$ is continuous on $[x_1,x_2]$, differentiable on
$(x_1,x_2)$ and
\begin{align*}
F\left( {x_1 } \right) &= \left[ {h\left( {x_1 } \right) - h\left(
{x_2 } \right)} \right]\frac{{f\left( {x_1 } \right)}}{{h\left(
{x_1 } \right)}} - \frac{1}{{h\left( {x_1 } \right)}}\left[
{h\left( {x_1 } \right)f\left( {x_2 } \right) - h\left( {x_2 }
\right)f\left( {x_1 } \right)} \right]
\\
&= f\left( {x_1 } \right) - f\left( {x_2 } \right)
\\
&=  F\left( {x_2 } \right).
\end{align*}
Applying Rolle's theorem, there is an $\xi \in (x_1,x_2)$ such
that  $F^{\prime}\left( \xi \right) = 0$, so that
\begin{align*}
F^{\prime}\left( \xi  \right) &= \left[ {h\left( {x_1 } \right)
-h\left( {x_2 } \right)} \right]\frac{{h\left( \xi
\right)f^{\prime}\left( \xi \right) - f\left( \xi
\right)h^{\prime}\left( \xi \right)}}{{h^2 \left( \xi  \right)}}
\\
&\qquad- \frac{{h^{\prime}\left( \xi  \right)}}{{h^2 \left( \xi
\right)}}\left[ {h\left( {x_1 } \right)f\left( {x_2 } \right) -
h\left( {x_2 } \right)f\left( {x_1 } \right)} \right]
\\
&=0,
\end{align*}
equivalently we write
\begin{align*}
\left[ {h\left( {x_1 } \right) - h\left( {x_2 } \right)}
\right]h\left( \xi  \right)f^{\prime}\left( \xi  \right) &=
h^{\prime}\left( \xi \right)\left[ {h\left( {x_1 } \right)f\left(
{x_2 } \right) - h\left( {x_2 } \right)f\left( {x_1 } \right)}
\right]
\\
&\qquad+ \left[ {h\left( {x_1 } \right) -h\left( {x_2 } \right)}
\right]f\left( \xi  \right)h^{\prime}\left( \xi  \right),
\end{align*}
which means that
\begin{align*}
\frac{{h\left( {x_1 } \right)f\left( {x_2 } \right) - h\left( {x_2
} \right)f\left( {x_1 } \right)}}{{h\left( {x_1 } \right) -
h\left( {x_2 } \right)}}=f\left( \xi  \right)- h\left( \xi
\right)\frac{{f^{\prime}\left( \xi \right)}}{{h^{\prime}\left( \xi
\right)}},
\end{align*}
and the proof is established.
\end{proof}

\begin{remark}
Let $a,b\in\mathbb{R}$ with $b>a>0$. By setting
$h\left(x\right)=x$, we refer to the {\rm{PMVT}}. More generally,
for $h\left(x\right)=x^r$, $r\in \mathbb{R}-\{0\}$, then
\eqref{eq.Boggio} becomes
\begin{align*}
\frac{{x^r_1f\left( {x_2 } \right) - x^r_2 f\left( {x_1 }
\right)}}{{x^r_1  - x^r_2}}=f\left( \xi \right)- \frac{\xi}{r
}f^{\prime}\left( \xi \right).
\end{align*}
For all distinct pairs $x_1, x_2 \in [a,b]$.

This type of MVT was applied to obtain Ostrowski's type
inequalities in \cite{Acu1}, \cite{Acu2}, \cite{Dragomir2} and
\cite{Popa}. For comprehensive list of results regarding
Ostrowski's inequality see the recent survey \cite{Dragomir}.
\end{remark}

\subsection{Pompeiu--Chebyshev functional}
Let $a,b\in \mathbb{R}$, $a<b$. Let $f,g,h:[a,b]\to \mathbb{R}$ be
three integrable functions, then the Pompeiu--Chebyshev functional
can be introduced such as:
\begin{align}
\label{Pompeiu.Chebyshev.Identity}\widehat{\mathcal{P}}_h\left(f,g\right)
&=\int_a^b {h^2 \left( x \right)dx} \int_a^b {f\left( t
\right)g\left( t \right)dt}  - \int_a^b {f\left( t \right)h\left(
t \right)dt} \int_a^b {h\left( x \right)g\left( x \right)dx}
\nonumber\\
&= \frac{1}{2}\int_a^b {\int_a^b {\left( {h\left( x \right)f\left(
t \right) - h\left( t \right)f\left( x \right)} \right)\left(
{h\left( x \right)g\left( t \right) - h\left( t \right)g\left( x
\right)} \right)dt} dx}.
\end{align}
If we consider $h\left( x \right)=1$, then
$\widehat{\mathcal{P}}_1\left(f,g\right) = \left( b-a
\right)\mathcal{T}\left(f,g\right)$,  which is the Chebyshev
functional \eqref{eq1.1.5}. Also, if $h\left( x \right)=x$, $x\in
[a,b]$, $b>a>0$, then $\widehat{\mathcal{P}}_x\left(f,g\right)
=\widehat{\mathcal{P}}\left(f,g\right) $, which is studied in the Sections \ref{sec2} and \ref{sec3}.\\

After we proposed $\widehat{\mathcal{P}}_h\left(f,g\right)$
independently, we noticed that
$\widehat{\mathcal{P}}_h\left(f,g\right)$ could be deduced from
more general identity of Andrei\'{e}f's (see \cite{MPF}, p.243),
which reads: For two continuous functions $f$ and $g$ defined on
$[a,b]$, we have the representation:
\begin{multline}
\label{eq1.2.2}\int_a^b {F_1 \left( x \right)F_2 \left( x
\right)dx} \int_a^b {G_1 \left( x \right)G_2 \left( x \right)dx} -
\int_a^b {F_1 \left( x \right)G_2 \left( x \right)dx} \int_a^b
{F_2 \left( x \right)G_1 \left( x \right)dx}
\\
= \frac{1}{2}\int_a^b {\int_a^b {\left( {F_1 \left( x \right)G_1
\left( y \right) - F_1 \left( y \right)G_1 \left( x \right)}
\right)\left( {F_2 \left( x \right)G_2 \left( y \right) - F_2
\left( y \right)G_2 \left( x \right)} \right)dx} dy}.
\end{multline}
Simply, substituting  $F_1(x)=F_2(x)=h(x)$,  $G_1(x)=f(x)$, and
$G_2(x)=g(x)$ in  \eqref{eq1.2.2}, then we obtain
\eqref{Pompeiu.Chebyshev.Identity}.

\begin{lemma}
\label{lemma2} Let $f,h : [a, b]\to \mathbb{R}$ be two absolutely
continuous function on the interval $[a, b]$. Then for any $t, x
\in [a, b]$, we have
\begin{align}
\left| {h\left( t \right)f\left( x \right) - h\left( x
\right)f\left( t \right)} \right|
  \le \left| {h\left( x \right)} \right|\left| {h\left( t \right)}
\right|\left\{ \begin{array}{l}
 \left| {hf' - fh'} \right|_{\infty ,\left[ {t,x} \right]} \int_t^x {\frac{{ds}}{{\left| {h\left( s \right)} \right|^2 }}}  \\
 \\
 \left| {hf' - fh'} \right|_{p,\left[ {t,x} \right]} \left( {\int_t^x {\frac{{ds}}{{\left| h\left( s \right) \right|^{2q}}}} } \right)^{\frac{1}{q}}  \\
 \\
 \mathop {\sup }\limits_{s \in \left[ {t,x} \right]} \left\{ {\frac{1}{{\left| {h\left( s \right)} \right|^2 }}} \right\}\left| {hf' - fh'} \right|_{1,\left[ {t,x} \right]}  \\
 \end{array} \right.
 \end{align}
where $p,q>1$ with $\frac{1}{p}+\frac{1}{q}=1$. Provided that
$h(s)\ne 0$, for $s\in [a,b]$.
\end{lemma}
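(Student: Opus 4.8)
The plan is to reduce the inequality to a single application of the fundamental theorem of calculus to the quotient $f/h$, followed by three invocations of H\"older's inequality. The key identity is that $h(t)f(x)-h(x)f(t)$ can be written as $h(x)h(t)$ times the integral of $(f/h)'$ over $[t,x]$.

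First I would check that $\varphi:=f/h$ is absolutely continuous on $[a,b]$. Since $h$ is absolutely continuous and never vanishes on the compact interval $[a,b]$, it is bounded away from $0$ there; hence $s\mapsto 1/h(s)$, being the composition of the map $u\mapsto 1/u$ (Lipschitz on a compact set avoiding the origin) with an absolutely continuous function, is itself absolutely continuous, and therefore so is the product $\varphi=f/h$. Almost everywhere one has
\[
\varphi'(s)=\frac{f'(s)h(s)-f(s)h'(s)}{h(s)^2}=\frac{h(s)f'(s)-f(s)h'(s)}{h(s)^2}.
\]
Fixing $t,x\in[a,b]$ (without loss of generality $t\le x$, the case $t\ge x$ being identical up to a sign), I would integrate $\varphi'$ over $[t,x]$ and multiply the resulting identity by $h(x)h(t)$ to obtain
\[
h(t)f(x)-h(x)f(t)=h(x)h(t)\int_t^x\frac{h(s)f'(s)-f(s)h'(s)}{h(s)^2}\,ds,
\]
and then, moving the absolute value inside the integral,
\[
\bigl|h(t)f(x)-h(x)f(t)\bigr|\le |h(x)|\,|h(t)|\int_t^x\frac{\bigl|h(s)f'(s)-f(s)h'(s)\bigr|}{|h(s)|^2}\,ds.
\]

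Finally I would estimate the last integral by H\"older's inequality applied to the pair of functions $hf'-fh'$ and $1/|h|^2$ on $[t,x]$. The $(L^\infty,L^1)$ pairing yields $\left|hf'-fh'\right|_{\infty,[t,x]}\int_t^x |h(s)|^{-2}\,ds$; the $(L^p,L^q)$ pairing with $\tfrac1p+\tfrac1q=1$ yields $\left|hf'-fh'\right|_{p,[t,x]}\bigl(\int_t^x |h(s)|^{-2q}\,ds\bigr)^{1/q}$; and the $(L^1,L^\infty)$ pairing yields $\sup_{s\in[t,x]}|h(s)|^{-2}\,\left|hf'-fh'\right|_{1,[t,x]}$. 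Substituting each of these into the previous display gives the three asserted bounds. I do not anticipate any genuine obstacle here: the only point requiring a word of care is the absolute continuity of $f/h$ (so that the fundamental theorem of calculus is applicable), which is exactly what the hypothesis $h(s)\neq 0$ on $[a,b]$ guarantees; everything else is routine.
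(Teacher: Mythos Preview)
Your proposal is correct and follows essentially the same route as the paper: establish the identity $h(t)f(x)-h(x)f(t)=h(x)h(t)\int_t^x (f/h)'\,ds$ via the fundamental theorem of calculus for the absolutely continuous quotient $f/h$, take absolute values, and then apply the three H\"older pairings. If anything, your justification of the absolute continuity of $f/h$ is more careful than the paper's, which simply asserts it.
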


\begin{proof}
Since $f$ and $h$ are absolutely continuous functions on $[a,b]$,
then $\frac{f\left( s \right)}{h\left( s \right)}$ is absolutely
continuous on $[a,b]$ and so that
\begin{align*}
\int_t^x {\left( {\frac{{f\left( s \right)}}{{h\left( s \right)}}}
\right)^\prime  ds}  = \frac{{f\left( x \right)}}{{h\left( x
\right)}} - \frac{{f\left( t \right)}}{{h\left( t \right)}},
\end{align*}
for any $t, x \in [a,b]$ with $x \ne t$.

Since
\begin{align*}
\int_t^x {\left( {\frac{{f\left( s \right)}}{{h\left( s \right)}}}
\right)^\prime  ds}  = \frac{{h\left( s \right)f'\left( s \right)
- f\left( s \right)h'\left( s \right)}}{{h^2 \left( s \right)}},
\end{align*}
we get the identity
\begin{align*}
h\left( t \right)f\left( x \right) - h\left( x \right)f\left( t
\right) = h\left( x \right)h\left( t \right)\int_t^x
{\frac{{h\left( s \right)f'\left( s \right) - f\left( s
\right)h'\left( s \right)}}{{h^2 \left( s \right)}}ds}.
\end{align*}
Taking the modulus, we have
\begin{align*}
&\left| {h\left( t \right)f\left( x \right) - h\left( x
\right)f\left( t \right)} \right|
\\
&= \left| {h\left( x \right)} \right|\left| {h\left( t \right)}
\right|\left| {\int_t^x {\frac{{h\left( s \right)f'\left( s
\right) - f\left( s \right)h'\left( s \right)}}{{h^2 \left( s
\right)}}ds} } \right|
\\
&\le \left| {h\left( x \right)} \right|\left| {h\left( t \right)}
\right|\int_t^x {\left| {\frac{{h\left( s \right)f'\left( s
\right) - f\left( s \right)h'\left( s \right)}}{{h^2 \left( s
\right)}}} \right|ds}
\\
&\le \left| {h\left( x \right)} \right|\left| {h\left( t \right)}
\right|\left\{ \begin{array}{l}
 \mathop {\sup }\limits_{s \in \left[ {t,x} \right]} \left| {h\left( s \right)f'\left( s \right) - f\left( s \right)h'\left( s \right)} \right|\int_t^x {\frac{{ds}}{{\left| {h\left( s \right)} \right|^2 }}}  \\
 \\
 \left( {\int_x^t {\left| {h\left( s \right)f'\left( s \right) - f\left( s \right)h'\left( s \right)} \right|^p ds} } \right)^{\frac{1}{p}} \left( {\int_t^x {\frac{{ds}}{{\left| {h\left( s \right)} \right|^{2q}}}} } \right)^{\frac{1}{q}}  \\
 \\
 \mathop {\sup }\limits_{s \in \left[ {t,x} \right]} \left\{ {\frac{1}{{\left| {h\left( s \right)} \right|^2 }}} \right\}\int_t^x {\left| {h\left( s \right)f'\left( s \right) - f\left( s \right)h'\left( s \right)} \right|ds}  \\
 \end{array} \right.
\\
&= \left| {h\left( x \right)} \right|\left| {h\left( t \right)}
\right|\left\{ \begin{array}{l}
 \left| {hf' - fh'} \right|_{\infty ,\left[ {t,x} \right]} \int_t^x {\frac{{ds}}{{\left| {h\left( s \right)} \right|^2 }}}  \\
 \\
 \left| {hf' - fh'} \right|_{p,\left[ {t,x} \right]} \left( {\int_t^x {\frac{{ds}}{{\left| {h\left( s \right)} \right|^{2q} }}} } \right)^{\frac{1}{q}}  \\
 \\
 \mathop {\sup }\limits_{s \in \left[ {t,x} \right]} \left\{ {\frac{1}{{\left| {h\left( s \right)} \right|^2 }}} \right\}\left| {hf' - fh'} \right|_{1,\left[ {t,x} \right]}  \\
 \end{array} \right.,
\end{align*}
which completes the proof.
\end{proof}

\begin{remark}
In the previous lemma, if we choose $h(s)=s^r$, $r \ge 1$,
$s\in[a,b]$, $b>a>0$. we have
\begin{align*}
\left| {t^r f\left( x \right) - x^r f\left( t \right)} \right| =
\left\{ \begin{array}{l}
 \frac{1}{{2r - 1}} \cdot \left| {\frac{{t^r }}{{x^{r - 1} }} - \frac{{x^r }}{{t^{r - 1} }}} \right|\left| {\ell _r f' - r\ell _{r - 1} f} \right|_{\infty ,\left[ {t,x} \right]}  \\
 \\
 \left( {\frac{1}{{2rq - 1}}} \right)^{\frac{1}{q}} \left| {\frac{{t^{rq} }}{{x^{rq - 1} }} - \frac{{x^{rq} }}{{t^{rq - 1} }}} \right|^{\frac{1}{q}} \left| {\ell _r f' - r\ell _{r - 1} f} \right|_{p,\left[ {t,x} \right]}  \\
 \\
 \left( {\frac{{\max \left\{ {t,x} \right\}}}{{\min \left\{ {t,x} \right\}}}} \right)^r \left| {\ell _r f' - r\ell _{r - 1} f} \right|_{1,\left[ {t,x} \right]}  \\
 \end{array} \right.,
\end{align*}
for $p,q>1$ with $\frac{1}{p}+\frac{1}{q}=1$, where $\ell
_r(s)=s^r$, $s\in[a,b]$ and  $r \ge 1$.
\end{remark}

\begin{remark}
By following the same approaches considered in \cite{Dragomir1}
and  in the Sections \ref{sec2} and \ref{sec3} of this work one
can state more general results concerning
$\widehat{\mathcal{P}}_h\left(f,g\right)$.  We left this part to
the interested reader and focused researchers.
\end{remark}

\begin{definition}
A real valued function $f$ defined on $\left[a,b\right]$ is called
increasing (decreasing) with respect to a non-negative function
$h:[a,b]\to \mathbb{R}_+$ or simply $h$-increasing
($h$-decreasing) if and only if
\begin{align*}
h\left( x \right)f\left( t \right) - h\left( t \right)f\left( x
\right) \ge (\le)\,\, 0
\end{align*}
whenever $t \ge  x$, for every $x,t \in [a,b]$. In special case if
$h(x)=1$ we refer to the original monotonicity. Also, if  $h(x)=x$
we have
\begin{align*}
xf\left( t \right) - t f\left( x
\right) \ge (\le)\,\, 0,
\end{align*}
which used in Theorem
\ref{thm1.1.1}.
\end{definition}

Next result generalize the inequality \eqref{eq3.1.Alomari2017}
and the Chebyshev first inequality (see
\eqref{Chebyshev.first.inequality}):
\begin{thm}
\label{thmxx.Alomari2017}Let $a,b\in\mathbb{R}$ with $a<b$. Let
$f,g: \left[ {a,b} \right]\subseteq \mathbb{R}\to \mathbb{R}$ be
three integrable functions on $\left[ {a,b} \right]$. If
$h:[a,b]\to \mathbb{R}_+$ is integrable on $[a,b]$ and  $f$ and
$g$ are both $h$-increasing or $h$-decreasing on $[a,b]$, then
\begin{align}
\label{xx.Alomari2017} \int_a^b {h^2 \left( x \right)dx} \int_a^b
{f\left( x \right)g\left(x \right)dx}  \ge\int_a^b {f\left( x
\right)h\left( x \right)dx} \int_a^b {h\left( x \right)g\left( x
\right)dx}.
\end{align}
\end{thm}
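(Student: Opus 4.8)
The plan is to read the inequality off from the sign of a double integral, exactly as in the classical ``rearrangement'' proof of Chebyshev's integral inequality, using the symmetric representation of $\widehat{\mathcal{P}}_h\left(f,g\right)$ already recorded in \eqref{Pompeiu.Chebyshev.Identity}. By that identity (equivalently, the instance $F_1=F_2=h$, $G_1=f$, $G_2=g$ of Andrei\'{e}f's identity \eqref{eq1.2.2}, which is a purely algebraic identity combined with Fubini's theorem on the finite-measure square $[a,b]^2$, hence valid for arbitrary integrable $f,g,h$) we have
\begin{align*}
\widehat{\mathcal{P}}_h\left(f,g\right)
&= \int_a^b {h^2\left(x\right)dx}\int_a^b{f\left(x\right)g\left(x\right)dx} - \int_a^b{f\left(x\right)h\left(x\right)dx}\int_a^b{h\left(x\right)g\left(x\right)dx}
\\
&= \frac{1}{2}\int_a^b{\int_a^b{\left(h\left(x\right)f\left(t\right)-h\left(t\right)f\left(x\right)\right)\left(h\left(x\right)g\left(t\right)-h\left(t\right)g\left(x\right)\right)dt}dx}.
\end{align*}
Thus it suffices to prove that the double integral on the right is nonnegative, since then \eqref{xx.Alomari2017} is precisely the statement $\widehat{\mathcal{P}}_h\left(f,g\right)\ge 0$.

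Next I would inspect the sign of the integrand at a fixed pair $x,t\in[a,b]$. Suppose first that $f$ and $g$ are both $h$-increasing. If $t\ge x$, the definition of $h$-increasing gives $h\left(x\right)f\left(t\right)-h\left(t\right)f\left(x\right)\ge 0$ and $h\left(x\right)g\left(t\right)-h\left(t\right)g\left(x\right)\ge 0$, so the product is $\ge 0$. If $t\le x$, applying the same definition with the roles of $x$ and $t$ interchanged gives $h\left(x\right)f\left(t\right)-h\left(t\right)f\left(x\right)\le 0$ and $h\left(x\right)g\left(t\right)-h\left(t\right)g\left(x\right)\le 0$, so the product is again $\ge 0$; and for $t=x$ both factors vanish. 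Hence the integrand is nonnegative on the whole square $[a,b]^2$, and integrating yields $\widehat{\mathcal{P}}_h\left(f,g\right)\ge 0$. If instead $f$ and $g$ are both $h$-decreasing, each of the two factors reverses sign compared with the previous discussion, but on each of the two regions $\{t\ge x\}$ and $\{t\le x\}$ the two factors still carry a common sign, so their product remains $\ge 0$ there, and the same conclusion follows.

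I do not expect a genuine obstacle here: the argument is the $h$-weighted analogue of the standard proof of the Chebyshev sum/integral inequality, and the only points requiring care are (i) confirming that the symmetric representation \eqref{Pompeiu.Chebyshev.Identity} is legitimate under the mere integrability hypotheses assumed, which it is, being an algebraic identity together with Fubini on a finite-measure product space, and (ii) the short two-region case distinction on the sign of the integrand, in particular the relabeling step that disposes of the range $t\le x$. Finally one should record the specializations: taking $h\equiv 1$ gives $\left(b-a\right)\mathcal{T}\left(f,g\right)\ge 0$, the classical Chebyshev integral inequality, while taking $h\left(x\right)=x$ with $b>a>0$ recovers Theorem \ref{thm1.1.1}.
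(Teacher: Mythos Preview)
Your proposal is correct and follows essentially the same approach as the paper: both use the symmetric double-integral representation \eqref{Pompeiu.Chebyshev.Identity} of $\widehat{\mathcal{P}}_h(f,g)$ and observe that the $h$-monotonicity hypothesis forces the integrand $\left(h(x)f(t)-h(t)f(x)\right)\left(h(x)g(t)-h(t)g(x)\right)$ to be nonnegative on all of $[a,b]^2$, whence the result. Your write-up is slightly more detailed (explicit two-region case split, explicit treatment of the $h$-decreasing case, and a remark on the validity of Fubini), but the underlying argument is identical.
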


\begin{proof}
Assume If $f$ and $g$ are both $h$-increasing on $[a,b]$, then we
have
\begin{align*}
\left[h\left( x \right)f\left( y \right) - h\left(y \right)f\left(
x \right) \right] \left[h\left( x \right)g\left( y \right) -
h\left( y\right)g\left( x \right) \right]\ge  0,
\end{align*}
for all $x, y\in [a,b]$, therefore we have
\begin{align*}
\int_a^b {\int_a^b {\left[h\left( x \right)f\left( y \right) -
h\left(y \right)f\left( x \right) \right] \left[h\left( x
\right)g\left( y \right) - h\left( y\right)g\left( x \right)
\right]dx}dy }\ge 0.
\end{align*}
But also we have
\begin{align*}
&\frac{1}{2}\int_a^b {\int_a^b {\left[h\left( x \right)f\left( y
\right) - h\left(y \right)f\left( x \right) \right] \left[h\left(
x \right)g\left( y \right) - h\left( y\right)g\left( x \right)
\right]dx}dy }
\\
&=  \int_a^b {h^2 \left( x \right)dx} \int_a^b {f\left( x
\right)g\left( x \right)dx}- \int_a^b {f\left( x \right)h\left( x
\right)dx} \int_a^b {h\left( x \right)g\left( x \right)dx}
\\
&\ge 0,
\end{align*}
which proves the inequality \eqref{xx.Alomari2017}.
\end{proof}

\begin{remark}
In \eqref{xx.Alomari2017}, if $h\left(t\right)=1$,  $t\in [a,b]$,
we recapture the first Chebyshev inequality, which reads:
\begin{align}
\label{Chebyshev.first.inequality}\left( b-a \right) \int_a^b
{f\left( x \right)g\left(x \right)dx} \ge\int_a^b {f\left( x
\right)dx} \int_a^b {g\left( x \right)dx}.
\end{align}
If $h\left(t\right)=t$,  $t\in [a,b]$, then  we recapture
\eqref{eq3.1.Alomari2017}.
\end{remark}

\begin{corollary}
\label{thm.cor.important}Let $a,b\in\mathbb{R}$ with $a<b$. Let
$f,g,h: \left[ {a,b} \right]\subseteq \mathbb{R}\to \mathbb{R}$ be
two integrable functions on $\left[ {a,b} \right]$. If $h:[a,b]\to
\mathbb{R}_+$ is integrable on $[a,b]$, $f$ is $h$-increasing and
$g$ is $h$-decreasing on $[a,b]$, then
\begin{align}
\int_a^b {h^2 \left( x \right)dx} \int_a^b {f\left( x
\right)g\left(x \right)dx}  \le\int_a^b {f\left( x \right)h\left(
x \right)dx} \int_a^b {h\left( x \right)g\left( x \right)dx}.
\end{align}
\end{corollary}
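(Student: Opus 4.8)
The plan is to mirror the proof of Theorem~\ref{thmxx.Alomari2017} exactly, simply reversing one inequality sign at the key step. First I would record the hypothesis: since $f$ is $h$-increasing and $g$ is $h$-decreasing on $[a,b]$, for any $x,y\in[a,b]$ the two bracketed factors $\bigl[h(x)f(y)-h(y)f(x)\bigr]$ and $\bigl[h(x)g(y)-h(y)g(x)\bigr]$ have opposite signs (one is $\ge0$ exactly when the other is $\le0$, according to the sign of $y-x$). Hence their product is $\le0$ pointwise, and so
\begin{align*}
\int_a^b\int_a^b \bigl[h(x)f(y)-h(y)f(x)\bigr]\bigl[h(x)g(y)-h(y)g(x)\bigr]\,dx\,dy \le 0.
\end{align*}

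Next I would expand the double integral exactly as in the proof of Theorem~\ref{thmxx.Alomari2017}: multiplying out the product gives four terms, and after integrating and using the symmetry in $x,y$ one obtains
\begin{align*}
\frac{1}{2}\int_a^b\int_a^b \bigl[h(x)f(y)-h(y)f(x)\bigr]\bigl[h(x)g(y)-h(y)g(x)\bigr]\,dx\,dy
= \int_a^b h^2(x)\,dx\int_a^b f(x)g(x)\,dx - \int_a^b f(x)h(x)\,dx\int_a^b h(x)g(x)\,dx,
\end{align*}
which is precisely $\widehat{\mathcal{P}}_h(f,g)$ from \eqref{Pompeiu.Chebyshev.Identity}. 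Combining with the displayed inequality above yields $\widehat{\mathcal{P}}_h(f,g)\le 0$, i.e. the claimed reversed inequality.

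The only point requiring a moment's care — and really the sole obstacle, though a minor one — is justifying the pointwise sign of the product of the two factors. Here I would argue: fix $x,y$ with, say, $y\ge x$. Then $h$-monotonicity of $f$ gives $h(x)f(y)-h(y)f(x)\ge0$, while $h$-monotonicity (decreasing) of $g$ gives $h(x)g(y)-h(y)g(x)\le0$; if instead $y\le x$ both inequalities flip, so in either case the product is $\le0$. (One could alternatively apply Corollary~\ref{cor1.1.1} with the substitution $h\equiv1$, but the direct argument above is cleanest and self-contained, and it is literally the sign-reversed version of the computation already carried out for Theorem~\ref{thmxx.Alomari2017}.) Everything else is the routine algebraic expansion of the double integral, identical to the preceding proof.
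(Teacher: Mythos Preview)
Your proposal is correct and is exactly the approach the paper takes: the paper's proof is the single line ``The proof goes likewise the proof of Theorem~\ref{thmxx.Alomari2017},'' and you have simply written out that argument explicitly, reversing the sign of the pointwise product because $f$ and $g$ now have opposite $h$-monotonicity.
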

\begin{proof}
The proof goes likewise the proof of Theorem
\ref{thmxx.Alomari2017}.
\end{proof}

A generalization of Theorem  \ref{thm1.Alomari2017}, the
celebrated pre-Gr\"{u}ss inequality is incorporated in the
following theorem.
\begin{corollary}
Let $a,b\in\mathbb{R}$ with $a<b$. Let $f,g: \left[ {a,b}
\right]\subseteq \mathbb{R}\to \mathbb{R}$ be two integrable
functions on $\left[ {a,b} \right]$. If $h:[a,b]\to \mathbb{R}_+$
is integrable on $[a,b]$, then
\begin{align}
\label{gene.pre.gruss2017}\left|{\widehat{\mathcal{P}}_h\left(f,g\right)}\right|
\le
\left|{\widehat{\mathcal{P}}_h\left(f,f\right)}\right|^{\frac{1}{2}}
\left|{\widehat{\mathcal{P}}_h\left(g,g\right)}\right|^{\frac{1}{2}},
\end{align}
or equivalently we can write
\begin{align*}
\left|{\widehat{\mathcal{P}}_h\left(f,g\right)}\right| \le
\left|{\widehat{\mathcal{P}}_f\left(h,h\right)}\right|^{\frac{1}{2}}
\left|{\widehat{\mathcal{P}}_g\left(h,h\right)}\right|^{\frac{1}{2}}.
\end{align*}
Both inequalities are sharp.
\end{corollary}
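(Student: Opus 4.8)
The plan is to mimic the proof of Theorem~\ref{thm1.Alomari2017} verbatim, replacing the specific weight $\ell(x)=x$ by a general non-negative integrable weight $h$. The starting point is the identity in the second line of \eqref{Pompeiu.Chebyshev.Identity}, namely
\[
\widehat{\mathcal{P}}_h\left(f,g\right)=\frac{1}{2}\int_a^b\int_a^b\left(h\left(x\right)f\left(t\right)-h\left(t\right)f\left(x\right)\right)\left(h\left(x\right)g\left(t\right)-h\left(t\right)g\left(x\right)\right)dt\,dx,
\]
which is exactly \eqref{eq1.2.2} with $F_1=F_2=h$, $G_1=f$, $G_2=g$. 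In particular, taking $g=f$ gives
\[
\widehat{\mathcal{P}}_h\left(f,f\right)=\frac{1}{2}\int_a^b\int_a^b\left(h\left(x\right)f\left(t\right)-h\left(t\right)f\left(x\right)\right)^2dt\,dx\ge 0,
\]
so $\widehat{\mathcal{P}}_h\left(f,f\right)=\left|\widehat{\mathcal{P}}_h\left(f,f\right)\right|$ and likewise for $g$; this positivity is what makes the square roots in \eqref{gene.pre.gruss2017} meaningful.

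Next I would apply the triangle inequality for integrals followed by the Cauchy--Bunyakovsky--Schwarz inequality on the product measure $dt\,dx$ over $[a,b]\times[a,b]$, exactly as in the proof of Theorem~\ref{thm1.Alomari2017}:
\begin{align*}
\left|\widehat{\mathcal{P}}_h\left(f,g\right)\right|
&=\frac{1}{2}\left|\int_a^b\int_a^b\left(h\left(x\right)f\left(t\right)-h\left(t\right)f\left(x\right)\right)\left(h\left(x\right)g\left(t\right)-h\left(t\right)g\left(x\right)\right)dt\,dx\right|\\
&\le\left(\frac{1}{2}\int_a^b\int_a^b\left|h\left(x\right)f\left(t\right)-h\left(t\right)f\left(x\right)\right|^2dt\,dx\right)^{\frac{1}{2}}\left(\frac{1}{2}\int_a^b\int_a^b\left|h\left(x\right)g\left(t\right)-h\left(t\right)g\left(x\right)\right|^2dt\,dx\right)^{\frac{1}{2}}\\
&=\left|\widehat{\mathcal{P}}_h\left(f,f\right)\right|^{\frac{1}{2}}\left|\widehat{\mathcal{P}}_h\left(g,g\right)\right|^{\frac{1}{2}}.
\end{align*}
This gives \eqref{gene.pre.gruss2017}. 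For the equivalent form, observe that $\widehat{\mathcal{P}}_h\left(f,f\right)$ is symmetric in the roles of $f$ and $h$ in the sense that the integrand $\left(h(x)f(t)-h(t)f(x)\right)^2$ is unchanged if one swaps $h$ and $f$; hence $\widehat{\mathcal{P}}_h\left(f,f\right)=\widehat{\mathcal{P}}_f\left(h,h\right)$ and similarly $\widehat{\mathcal{P}}_h\left(g,g\right)=\widehat{\mathcal{P}}_g\left(h,h\right)$, which rewrites \eqref{gene.pre.gruss2017} in the stated alternative form.

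For sharpness I would exhibit a case of equality in CBS, i.e.\ a pair for which the two integrands are proportional; the natural choice generalizing the remark after Theorem~\ref{thm1.Dragomir2015} is $f=g$ (or $f=cg$ for a constant $c$), since then the two factors in the double integral coincide and CBS is an equality, giving $\left|\widehat{\mathcal{P}}_h\left(f,f\right)\right|=\left|\widehat{\mathcal{P}}_h\left(f,f\right)\right|^{1/2}\left|\widehat{\mathcal{P}}_h\left(f,f\right)\right|^{1/2}$ trivially but non-degenerately once $h$ and $f$ are genuinely non-proportional (e.g.\ $h(x)=x$ and $f(x)=cx-1$ on $[a,b]$ with $b>a>0$). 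The only point requiring a moment's care is that $\widehat{\mathcal{P}}_h\left(f,f\right)$ be strictly positive in the example, which holds precisely when $f/h$ is non-constant; I expect this verification to be the only non-routine step, and it is handled by the same explicit computation already used in the earlier remark. No serious obstacle is anticipated: the argument is a direct transcription of the $h(x)=x$ case with CBS applied on the square.
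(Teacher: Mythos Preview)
Your proposal is correct and follows essentially the same route as the paper: start from the double-integral identity \eqref{Pompeiu.Chebyshev.Identity}, apply the triangle inequality and then the Cauchy--Bunyakovsky--Schwarz inequality on $[a,b]\times[a,b]$, and identify the resulting factors as $\widehat{\mathcal{P}}_h(f,f)$ and $\widehat{\mathcal{P}}_h(g,g)$, noting the symmetry $\widehat{\mathcal{P}}_h(f,f)=\widehat{\mathcal{P}}_f(h,h)$. Your added remarks on non-negativity of $\widehat{\mathcal{P}}_h(f,f)$ and on the equality case for sharpness are welcome clarifications that the paper leaves implicit.
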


\begin{proof}
Since
\begin{align*}
\left|{\widehat{\mathcal{P}}_h\left(f,g\right)}\right| =
\left|{\frac{1}{2}\int_a^b {\int_a^b {\left[h\left( x
\right)f\left( y \right) - h\left(y \right)f\left( x \right)
\right] \left[h\left( x \right)g\left( y \right) - h\left(
y\right)g\left( x \right) \right]dx}dy }}\right|
\end{align*}
Utilizing the triangle inequality and then the
Cauchy-Bunyakovsky-Schwarz (CBS) inequality, we get
\begin{align}
\left|{\widehat{\mathcal{P}}_h\left(f,g\right)}\right| &\le
\frac{1}{2}\int_a^b {\int_a^b {\left|h\left( x \right)f\left( y
\right) - h\left(y \right)f\left( x \right) \right| \left|h\left(
x \right)g\left( y \right) - h\left( y\right)g\left( x \right)
\right|dx}dy }
\nonumber\\
&\le   \left( {\frac{1}{2}\int_a^b {\int_a^b {\left( {h\left( x
\right)f\left( y \right) - h\left( y \right)f\left( x \right)}
\right)^2 dy} dx} } \right)^{\frac{1}{2}} \label{Gene.Cheb.POMP}
\\
&\qquad\times  \left( {\frac{1}{2}\int_a^b {\int_a^b {\left(
{h\left( x \right)g\left( y \right) - h\left( y \right)g\left( x
\right)} \right)^2 dy} dx} } \right)^{\frac{1}{2}}.\nonumber
\end{align}
On the other hand, we have
\begin{align*}
  \frac{1}{2}\int_a^b {\int_a^b {\left( {h\left( x \right)f\left( y \right) - h\left( y \right)f\left( x \right)} \right)^2 dy} dx}
     &=
\int_a^b {h^2 \left( x \right)dx} \int_a^b {f^2 \left( y
\right)dy}  - \left( {\int_a^b {h\left( x \right)f\left( x
\right)dx} } \right)^2
\\
&=\widehat{\mathcal{P}}_h\left(f,f\right)\equiv
\widehat{\mathcal{P}}_f\left(h,h\right),
\end{align*}
and similarly,
\begin{align*}
  \frac{1}{2}\int_a^b {\int_a^b {\left( {h\left( x \right)g\left( y \right) - h\left( y \right)g\left( x \right)} \right)^2 dy}
 dx}
  &=
\int_a^b {h^2 \left( x \right)dx} \int_a^b {g^2 \left( y
\right)dy}  - \left( {\int_a^b {h\left( x \right)g\left( x
\right)dx} } \right)^2
\\
&=\widehat{\mathcal{P}}_h\left(g,g\right)\equiv
\widehat{\mathcal{P}}_g\left(h,h\right).
\end{align*}
Substituting in \eqref{Gene.Cheb.POMP}, we get  the required
result.
\end{proof}
\begin{remark}
In \eqref{gene.pre.gruss2017}, if $h\left(t\right)=1$,  $t\in
[a,b]$, we recapture the classical version of pre-Gr\"{u}ss
inequality, which reads:
\begin{align*}
\left|{\mathcal{T}\left(f,g\right)}\right| \le
\left|{\mathcal{T}\left(f,f\right)}\right|^{\frac{1}{2}}
\left|{\mathcal{T}\left(g,g\right)}\right|^{\frac{1}{2}}.
\end{align*}
If $h\left(t\right)=t$,  $t\in [a,b]$, then  we recapture
\eqref{eq1.Alomari2017}.
\end{remark}

\begin{remark}
The weighted version of $\widehat{\mathcal{P}}_h\left(f,g\right)$
can be presented using Andrei\'{e}f's weighted version of
\eqref{eq1.2.2}  \emph{(see \cite{MPF}, p.244)}, which reads: For
arbitrary continuous functions $F_i,G_i:[a,b]\to \mathbb{R}$
$(i=1,2)$ defined on $[a,b]$ and a positive continuous function
$p:[a,b]\to \mathbb{R}_+$, we have
\begin{align}
\label{eq1.2.4}&\left| {\begin{array}{*{20}c}
   {\int_a^b {p\left( x \right)F_1 \left( x \right)G_1 \left( x \right)dx} } & {\int_a^b {p\left( x \right)F_1 \left( x \right)G_2 \left( x \right)dx} }  \\
   {\int_a^b {p\left( x \right)F_2 \left( x \right)G_1 \left( x \right)dx} } & {\int_a^b {p\left( x \right)F_2 \left( x \right)G_2 \left( x \right)dx} }  \\
\end{array}} \right|
\\
& = \frac{1}{2}\int_a^b {\int_a^b {\left| {\begin{array}{*{20}c}
   {F_1 \left( {x_1 } \right)} & {F_1 \left( {x_2 } \right)}  \\
   {F_2 \left( {x_1 } \right)} & {F_2 \left( {x_2 } \right)}  \\
\end{array}} \right|\left| {\begin{array}{*{20}c}
   {G_1 \left( {x_1 } \right)} & {G_1 \left( {x_2 } \right)}  \\
   {G_2 \left( {x_1 } \right)} & {G_2 \left( {x_2 } \right)}  \\
\end{array}} \right|p\left( {x_1 } \right)p\left( {x_2 } \right)dx_1 dx_2 }
}.\nonumber
\end{align}
For $p(x)=1$, we get the original version of Andrei\'{e}f's
identity \eqref{eq1.2.2}. Moreover, by substituting $F_1\left( {t}
\right)=G_1\left( {t} \right)=h\left( {t} \right)$, $F_2\left( {t}
\right)=f\left( {t} \right)$, $G_2\left( {t} \right)=g\left( {t}
\right)$ and $p\equiv p$ in \eqref{eq1.2.4}, then we obtain the
following weighted version of Pompeiu--Chebyshev functional
\begin{align*}
&\widehat{\mathcal{P}}_h\left(f,g;p\right)\\&= \int_a^b {p\left( x
\right)h^2\left( x \right)dx}  \cdot \int_a^b {p\left( x
\right)f\left( x \right)g\left( x \right)dx}
 - \int_a^b {p\left( x \right)h\left( x \right)g\left( x
\right)dx} \cdot \int_a^b {p\left( x \right)h\left( x \right)f
\left( x \right)dx}
\\
&=\frac{1}{2}\int_a^b {\int_a^b {p\left( {x_1 } \right)p\left(
{x_2 } \right)\left[ {h\left( {x_1 } \right)f\left( {x_2 } \right)
- h\left( {x_2 } \right)f\left( {x_1 } \right)} \right]\left[
{h\left( {x_1 } \right)g\left( {x_2 } \right) - h\left( {x_2 }
\right)g\left( {x_1 } \right)} \right]dx_1 }dx_2  }.
\end{align*}
\end{remark}

\begin{remark}
In his work \cite{Dragomir1}, Dragomir considered the Chebyshev
functional $\mathcal{T}\left(F_1,F_2\right)$ between two
absolutely continuous mappings $F_1(x)=\frac{f(x)}{x}$ and
$F_2(x)=\frac{g(x)}{x}$ , $x\in [a,b]$, $b>a>0$. This can be
generalized in terms of $h$-function as mentioned above (see Lemma
\ref{lemma2} above, and Theorem 2.1 in \cite{Dragomir1}). We left
this
part to the interested reader.\\
\end{remark}

\subsection{The  reverse of CBS inequality} The Pompeiu--Chebyshev functional
$\widehat{\mathcal{P}}_h \left(\cdot,\cdot\right)$
 can be very
useful to bound the reverse of  CBS inequality, which it has many
applications in various branches of Mathematics, Physics and
Statistics. In another context, we find the following related
result due to Barnett and Dragomir \cite{Barnett}:
\begin{multline}
\label{eq1.Barnett2008} \int_a^b {h^2 \left( x \right)dx} \int_a^b {f^2 \left( x \right)dx}  - \left( {\int_a^b {h\left( x \right)f\left( x \right)dx} } \right)^2  \\
  \le H^2 \left\{ \begin{array}{l}
 \frac{{\left( {b - a} \right)^{2p + 2} }}{{\left( {2p + 1} \right)\left( {2p + 2} \right)}}\left\| h \right\|_\infty ^4 ,\,\,\,\,\,\,\,\,\,\,\,\,\,\,\,\,\,\,\,\,\,\,\,\,\,{\rm{if}}\,\,\,h \in L^\infty  \left[ {a,b} \right] \\
  \\
 \frac{{2^{ - \frac{1}{\beta }} \left( {b - a} \right)^{2p + \frac{2}{\alpha }} }}{{\left( {2\alpha p + 1} \right)^{\frac{1}{\alpha }} \left( {2\alpha p + 2} \right)^{\frac{1}{\alpha }} }}\left\| h \right\|_{2\beta }^4 ,\,\,\,\,\,\,\,\,\,{\rm{if}}\,\,\,h \in L^{2\beta}   \left[ {a,b} \right] \\
  \\
 \frac{1}{2}\left( {b - a} \right)^{2p} \left\| h \right\|_2^4 , \,\,\,\,\,\,\,\,\,\,\,\,\,\,\,\,\,\,\,\,\,\,\,\,\,\,\,\,\,\,\,{\rm{if}}\,\,\,h \in L^2  \left[ {a,b} \right] \\
 \end{array} \right.,
\end{multline}
for $\alpha >1$, $\frac{1}{\alpha}+\frac{1}{\beta}=1$, where $f$
and $h$ are assumed to be measurable on $[a,b]$ and $\frac{f}{h}$
is H\"{o}lder continuous of order $p\in (0,1]$ with H\"{o}lderian
constant $H>0$, and
\begin{align*}
\left\| h \right\|_\infty   = {\rm{ess}}\mathop {\sup }\limits_{t
\in \left[ {a,b} \right]} \left| {h\left( t \right)}
\right|,\,\,\, {\rm{and}} \,\,\, \left\| h \right\|_p  = \left(
{\int_a^b {\left| {h\left( t \right)} \right|^p dt} }
\right)^{\frac{1}{p}}, p \ge 1,
\end{align*}
are the usual Lebesgue norms. Clearly, this result support our
consideration to generalize the Chebyshev functional as presented
in the functional $\widehat{\mathcal{P}}_h
\left(\cdot,\cdot\right)$. Moreover, if $p=1$ in
\eqref{eq1.Barnett2008}, i.e., $\frac{f}{h}$ satisfy the Lipschitz
condition we get
\begin{multline}
 \label{eq2.Barnett2008}\int_a^b {h^2 \left( x \right)dx} \int_a^b {f^2 \left( x \right)dx}  - \left( {\int_a^b {h\left( x \right)f\left( x \right)dx} } \right)^2  \\
  \le L^2 \left\{ \begin{array}{l}
\frac{{\left( {b - a} \right)^4 }}{{12}}\left\| h \right\|_\infty ^4 ,\,\,\,\,\,\,\,\,\,\,\,\,\,\,\,\,\,\,\,\,\,\,\,\,\,\,\,\,\,\,\, \,\,{\rm{if}}\,\,\,h \in L^\infty  \left[ {a,b} \right] \\
  \\
 \frac{{2^{ - \frac{1}{\beta }} \left( {b - a} \right)^{2 + \frac{2}{\alpha }} }}{{\left( {2\alpha  + 1} \right)^{\frac{1}{\alpha }} \left( {2\alpha  + 2} \right)^{\frac{1}{\alpha }} }}\left\| h \right\|_{2\beta }^4 ,\,\,\,\,\,\,\,\, {\rm{if}}\,\,\,h \in L^{2\beta}   \left[ {a,b} \right] \\
  \\
 \frac{1}{2}\left( {b - a} \right)^2 \left\| h \right\|_2^4 ,\,\,\,\,\,\,\,\,\,\,\,\,\,\,\,\,\,\,\,\,\,\,\,\,\,\,\,{\rm{if}}\,\,\,h \in L^2 \left[ {a,b} \right] \\
 \end{array} \right. .
\end{multline}
It was shown in \cite{Barnett}  that, the constant $\frac{1}{12}$
is the best possible. This can be seen by choosing $f(x) = x$ and
$h(x) =
1$.\\

Fortunately, $\widehat{\mathcal{P}}_h \left(f,f\right)$ represents
(exactly) the revers of CBS inequality, therefore  we can state
some related results, as follows:
\begin{theorem}
Assume that  $f$ and $g$ are measurable on $[a,b]$ and
$\frac{f}{h}$, $\frac{g}{h}$ are  H\"{o}lder continuous of order
$p,q\in (0,1]$ with H\"{o}lderian constants $H_1,H_2>0$;
respectively, then we have
\begin{align}
\label{CBS1.Alomari2017}
\left|{\widehat{\mathcal{P}}_h\left(f,g\right)}\right|  \le H_1H_2
\left\{ \begin{array}{l}
 \frac{{\left( {b - a} \right)^{p + q + 2} }}{\sqrt{\left( {2p + 1} \right) \left( {2p + 2} \right)  \left( {2q+ 1} \right) \left( {2q + 2} \right)}}\left\| h \right\|_\infty ^4 ,\,\,\,\,\,\,\,\,\,\,\,\,\,\,\,\,\,\,\,\,\,\,\,\,\,{\rm{if}}\,\,\,h \in L^\infty  \left[ {a,b} \right] \\
  \\
 \frac{{2^{ - \frac{1}{\beta }} \left( {b - a} \right)^{p +q+ \frac{2}{\alpha }} }}{{\left[\left( {2\alpha p + 1} \right)  \left( {2\alpha p + 2} \right)\left( {2\alpha q + 1} \right)  \left( {2\alpha q + 2} \right) \right]^{\frac{1}{2\alpha }} }}\left\| h \right\|_{2\beta }^4 ,\,\,\,\,\,\,\,\,\,{\rm{if}}\,\,\,h \in L^{2\beta}   \left[ {a,b} \right] \\
  \\
 \frac{1}{2}\left( {b - a} \right)^{p+q} \left\| h \right\|_2^4 , \,\,\,\,\,\,\,\qquad\qquad\,\,\,\,\,\,\,\,\,\,\,\,\,\,\,\,\,\,\,\,\,\,\,\,\,\,\,\,\,\,\,{\rm{if}}\,\,\,h \in L^2  \left[ {a,b} \right] \\
 \end{array} \right.,
\end{align}
for $\alpha >1$, $\frac{1}{\alpha}+\frac{1}{\beta}=1$. Provided
that $h(s)\ne 0$ in $[a,b]$.
\end{theorem}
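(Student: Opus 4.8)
The plan is to deduce the result from just two facts already at our disposal: the generalized pre-Gr\"{u}ss inequality \eqref{gene.pre.gruss2017} and the Barnett--Dragomir reverse CBS bound \eqref{eq1.Barnett2008}. The bridge between them is the elementary identity, recorded in the proof of \eqref{gene.pre.gruss2017},
\begin{align*}
\widehat{\mathcal{P}}_h\left(f,f\right)=\int_a^b {h^2 \left( x \right)dx} \int_a^b {f^2 \left( x \right)dx}  - \left( {\int_a^b {h\left( x \right)f\left( x \right)dx} } \right)^2 ,
\end{align*}
which is precisely the non-negative quantity on the left-hand side of \eqref{eq1.Barnett2008}; the same holds with $g$ in place of $f$.

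First I would apply \eqref{eq1.Barnett2008} twice: to $\widehat{\mathcal{P}}_h\left(f,f\right)$ with H\"{o}lder exponent $p$ and constant $H_1$, and to $\widehat{\mathcal{P}}_h\left(g,g\right)$ with H\"{o}lder exponent $q$ and constant $H_2$ (the hypotheses of \eqref{eq1.Barnett2008} are met since $f/h$, $g/h$ are H\"{o}lder continuous and $h\ne 0$ on $[a,b]$). Writing $B_\bullet(r)$ for the bracketed expression on the right-hand side of \eqref{eq1.Barnett2008} with the parameter $p$ replaced by $r$, with $\bullet\in\{\infty,\,2\beta,\,2\}$ indicating the regime imposed on $h$, this gives
\begin{align*}
\widehat{\mathcal{P}}_h\left(f,f\right)\le H_1^2\,B_\bullet(p),\qquad \widehat{\mathcal{P}}_h\left(g,g\right)\le H_2^2\,B_\bullet(q).
\end{align*}
Then I would invoke \eqref{gene.pre.gruss2017} (whose proof uses only the triangle and Cauchy--Schwarz inequalities, so the sign of $h$ is irrelevant), obtaining
\begin{align*}
\left|{\widehat{\mathcal{P}}_h\left(f,g\right)}\right| \le \widehat{\mathcal{P}}_h\left(f,f\right)^{1/2}\,\widehat{\mathcal{P}}_h\left(g,g\right)^{1/2} \le H_1 H_2\, B_\bullet(p)^{1/2}\,B_\bullet(q)^{1/2}.
\end{align*}

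It then remains only to simplify $B_\bullet(p)^{1/2}B_\bullet(q)^{1/2}$ in each of the three regimes. In the $L^\infty$ case the powers of $(b-a)$ add to $(b-a)^{p+q+2}$, the two square-rooted denominators merge under one radical as $\sqrt{(2p+1)(2p+2)(2q+1)(2q+2)}$, and $\|h\|_\infty^2\cdot\|h\|_\infty^2=\|h\|_\infty^4$, giving the first branch. The $L^2$ case is immediate, since $B_2(r)=\tfrac12(b-a)^{2r}\|h\|_2^4$ and hence the product of square roots equals $\tfrac12(b-a)^{p+q}\|h\|_2^4$. The $L^{2\beta}$ case is the only one needing a little attention: one collects the two factors $2^{-1/(2\beta)}$ into $2^{-1/\beta}$, adds the exponents $p+1/\alpha$ and $q+1/\alpha$ of $(b-a)$, and merges the denominators $(2\alpha p+1)^{1/(2\alpha)}(2\alpha p+2)^{1/(2\alpha)}$ and $(2\alpha q+1)^{1/(2\alpha)}(2\alpha q+2)^{1/(2\alpha)}$ into $\bigl[(2\alpha p+1)(2\alpha p+2)(2\alpha q+1)(2\alpha q+2)\bigr]^{1/(2\alpha)}$, which is exactly the middle branch. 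I anticipate no genuine analytic obstacle; the only risk is a bookkeeping slip in the $L^{2\beta}$ regime when combining the $\alpha$ and $\beta$ exponents, so that is the step I would verify most carefully.
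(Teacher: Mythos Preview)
Your proposal is correct and follows essentially the same route as the paper: apply the Barnett--Dragomir bound \eqref{eq1.Barnett2008} separately to $\widehat{\mathcal{P}}_h(f,f)$ and $\widehat{\mathcal{P}}_h(g,g)$, then feed both into the generalized pre-Gr\"uss inequality (the paper cites \eqref{Gene.Cheb.POMP}, which is the Cauchy--Schwarz step yielding \eqref{gene.pre.gruss2017}). Your write-up is actually more explicit than the paper's about the algebraic merging of the three regimes, which the paper leaves to the reader.
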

\begin{proof}
From \eqref{eq1.Barnett2008}, we have
\begin{align}
\label{CBS2.Alomari2017}
\left|{\widehat{\mathcal{P}}_h\left(f,f\right)}\right| &=\int_a^b
{h^2 \left( x \right)dx} \int_a^b {f^2 \left( x \right)dx}  -
\left( {\int_a^b {h\left( x \right)f\left( x \right)dx} }
\right)^2
\nonumber\\
&\le H^2 \left\{ \begin{array}{l}
 \frac{{\left( {b - a} \right)^{2p + 2} }}{{\left( {2p + 1} \right)\left( {2p + 2} \right)}}\left\| h \right\|_\infty ^4 ,\,\,\,\,\,\,\,\,\,\,\,\,\,\,\,\,\,\,\,\,\,\,\,\,\,{\rm{if}}\,\,\,h \in L^\infty  \left[ {a,b} \right] \\
  \\
 \frac{{2^{ - \frac{1}{\beta }} \left( {b - a} \right)^{2p + \frac{2}{\alpha }} }}{{\left( {2\alpha p + 1} \right)^{\frac{1}{\alpha }} \left( {2\alpha p + 2} \right)^{\frac{1}{\alpha }} }}\left\| h \right\|_{2\beta }^4 ,\,\,\,\,\,\,\,\,\,{\rm{if}}\,\,\,h \in L^{2\beta}  \left[ {a,b} \right] \\
  \\
 \frac{1}{2}\left( {b - a} \right)^{2p} \left\| h \right\|_2^4 , \,\,\,\,\,\,\,\,\,\,\,\,\,\,\,\,\,\,\,\,\,\,\,\,\,\,\,\,\,\,\,{\rm{if}}\,\,\,h \in L^2  \left[ {a,b} \right] \\
 \end{array} \right.,
\end{align}
The same inequality holds for
$\left|{\widehat{\mathcal{P}}_h\left(g,g\right)}\right|$.
Substituting \eqref{CBS2.Alomari2017} and that one resulting from
$\left|{\widehat{\mathcal{P}}_h\left(g,g\right)}\right|$ in
\eqref{Gene.Cheb.POMP} we get the required result.
\end{proof}

\begin{corollary}
Let $b>a>0$ and assume that  $f$ and $g$ are measurable on $[a,b]$
and $\frac{f}{x}$, $\frac{g}{x}$ are  H\"{o}lder continuous of
order $p,q\in (0,1]$ with H\"{o}lderian constants $H_1,H_2>0$;
respectively, then we have
\begin{align}
\left|{\widehat{\mathcal{P}}\left(f,g\right)}\right|  \le H_1H_2
\left\{ \begin{array}{l}
 \frac{{\left( {b - a} \right)^{p + q + 2} }}{\sqrt{\left( {2p + 1} \right) \left( {2p + 2} \right)  \left( {2q+ 1} \right) \left( {2q + 2} \right)}}b^4,\\
  \\
 \frac{{2^{ - \frac{1}{\beta }} \left( {b - a} \right)^{p +q+ \frac{2}{\alpha }} }}{{\left[\left( {2\alpha p + 1} \right)  \left( {2\alpha p + 2} \right)\left( {2\alpha q + 1} \right)  \left( {2\alpha q + 2} \right) \right]^{\frac{1}{2\alpha }} }} \left( {\frac{{b^{2\beta  + 1}  - a^{2\beta  + 1} }}{{2\beta  +
1}}} \right)^{\frac{2}{\beta }},\\
  \\
 \frac{1}{2}\left( {b - a} \right)^{p+q}\left( {b^3  - a^3 } \right)^2,
 \end{array} \right.
\end{align}
for $\alpha >1$, $\frac{1}{\alpha}+\frac{1}{\beta}=1$.
\end{corollary}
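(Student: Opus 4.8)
The plan is to obtain this corollary as the special case $h(x)=x$ of the theorem immediately preceding it. First I would verify that the hypotheses transfer: since $b>a>0$, the identity map $\ell(x)=x$ satisfies $\ell(s)\ne0$ for every $s\in[a,b]$, so the nonvanishing requirement on $h$ in that theorem is met, and by the identity recorded just after \eqref{Pompeiu.Chebyshev.Identity} one has $\widehat{\mathcal{P}}_\ell\left(f,g\right)=\widehat{\mathcal{P}}\left(f,g\right)$. Hence the assumption that $\frac{f}{x}$ and $\frac{g}{x}$ are H\"older continuous of orders $p,q\in(0,1]$ with H\"olderian constants $H_1,H_2>0$ is precisely the hypothesis of that theorem applied with $h=\ell$.

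Next I would evaluate the three Lebesgue norms of $\ell$ occurring on the right-hand side of \eqref{CBS1.Alomari2017}. One has $\left\|\ell\right\|_\infty=\sup_{x\in[a,b]}x=b$, so $\left\|\ell\right\|_\infty^4=b^4$; next $\left\|\ell\right\|_{2\beta}=\left(\int_a^b x^{2\beta}\,dx\right)^{1/(2\beta)}=\left(\frac{b^{2\beta+1}-a^{2\beta+1}}{2\beta+1}\right)^{1/(2\beta)}$, whence $\left\|\ell\right\|_{2\beta}^4=\left(\frac{b^{2\beta+1}-a^{2\beta+1}}{2\beta+1}\right)^{2/\beta}$; and finally $\left\|\ell\right\|_2=\left(\int_a^b x^2\,dx\right)^{1/2}=\left(\frac{b^3-a^3}{3}\right)^{1/2}$, so $\left\|\ell\right\|_2^4=\frac{(b^3-a^3)^2}{9}$.

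Substituting these three quantities into the corresponding three cases of \eqref{CBS1.Alomari2017} then yields the stated bound for $\left|\widehat{\mathcal{P}}\left(f,g\right)\right|$, valid for all $\alpha>1$ with $\frac{1}{\alpha}+\frac{1}{\beta}=1$. Since the whole argument is a substitution into a result established earlier, there is no genuine obstacle; the only points that require a little care are the exponent bookkeeping in $\left\|\ell\right\|_{2\beta}^4$ (the outer fourth power and the $1/(2\beta)$ from the norm combine to the exponent $2/\beta$) and the numerical constant in the third case, where the factor $\tfrac13$ from $\int_a^b x^2\,dx$ should be carried through so that $\left\|\ell\right\|_2^4=\tfrac19(b^3-a^3)^2$.
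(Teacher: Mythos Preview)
Your approach is exactly the paper's: the proof there reads simply ``Setting $h(x)=x$ in \eqref{CBS1.Alomari2017},'' and your norm computations carry this out correctly. Your observation about the third case is also right---the substitution actually gives $\left\|\ell\right\|_2^4=\tfrac{1}{9}(b^3-a^3)^2$, so the paper's stated bound $\tfrac{1}{2}(b-a)^{p+q}(b^3-a^3)^2$ is valid but weaker by a factor of $9$ than what the substitution yields.
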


\begin{proof}
Setting $h(x)=x$ in \eqref{CBS1.Alomari2017}.
\end{proof}

\begin{theorem}
Assume that  $f$ and $g$ are measurable on $[a,b]$ and
$\frac{h}{f}$, $\frac{h}{g}$ are  H\"{o}lder continuous of order
$p,q\in (0,1]$ with H\"{o}lderian constants $H_1,H_2>0$;
respectively, then we have
\begin{align}
\label{CBS4.Alomari2017}
\left|{\widehat{\mathcal{P}}_h\left(f,g\right)}\right|  \le H_1H_2
\left\{ \begin{array}{l}
 \frac{{\left( {b - a} \right)^{p + q + 2} }}{\sqrt{\left( {2p + 1} \right) \left( {2p + 2} \right)  \left( {2q+ 1} \right) \left( {2q + 2} \right)}}\left\| f \right\|_\infty ^2\left\| g \right\|_\infty ^2 ,\,\,\,\,\,\,\,\,\,\,\,\,\,\,\,\,\,\,\,\,\,\,{\rm{if}}\,\,\,f,g \in L^\infty  \left[ {a,b} \right] \\
  \\
 \frac{{2^{ - \frac{1}{\beta }} \left( {b - a} \right)^{p +q+ \frac{2}{\alpha }} }}{{\left[\left( {2\alpha p + 1} \right)  \left( {2\alpha p + 2} \right)\left( {2\alpha q + 1} \right)  \left( {2\alpha q + 2} \right) \right]^{\frac{1}{2\alpha }} }}\left\| f \right\|_{2\beta }^2\left\| g \right\|_{2\beta }^2 , \,\,\,\,\,{\rm{if}}\,\,\,f,g \in L^{2\beta}   \left[ {a,b} \right] \\
  \\
 \frac{1}{2}\left( {b - a} \right)^{p+q} \left\| f \right\|_2^2\left\| g \right\|_2^2 , \,\,\,\,\,\,\,\qquad\qquad\,\,\,\,\,\,\,\,\,\,\,\,\,\,\,\,\,\,\,\,\,\,\,\,\,\,\,\,\,\,{\rm{if}}\,\,\,f,g \in L^2  \left[ {a,b} \right] \\
 \end{array} \right.,
\end{align}
for $\alpha >1$, $\frac{1}{\alpha}+\frac{1}{\beta}=1$. Provided
that neither $f(s)$ nor $g(s)$ equal to $0$ in $[a,b]$.
\end{theorem}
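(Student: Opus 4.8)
The plan is to mirror the proof of the preceding theorem, exchanging the roles of $h$ with $f$ and then of $h$ with $g$. The pivot is the symmetry identity already recorded in the proof of the generalized pre-Gr\"{u}ss inequality, namely
\[
\widehat{\mathcal{P}}_h\left(f,f\right)=\frac{1}{2}\int_a^b\int_a^b\left(h(x)f(y)-h(y)f(x)\right)^2dy\,dx\equiv\widehat{\mathcal{P}}_f\left(h,h\right),
\]
together with the analogous $\widehat{\mathcal{P}}_h(g,g)\equiv\widehat{\mathcal{P}}_g(h,h)$. Although the Barnett--Dragomir estimate \eqref{eq1.Barnett2008} is stated as an upper bound for $\int_a^b h^2\int_a^b f^2-\left(\int_a^b hf\right)^2$ under the assumption that $f/h$ is H\"{o}lder continuous, its left-hand side is symmetric in the pair $(h,f)$, so it applies verbatim \emph{after interchanging $h$ and $f$}. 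Here $h/f$ is H\"{o}lder continuous of order $p$ with constant $H_1$ (this is precisely where the hypothesis $f(s)\ne0$ on $[a,b]$ enters, so that $h/f$ is well defined), hence the swapped form of \eqref{eq1.Barnett2008} yields $\widehat{\mathcal{P}}_f(h,h)\le H_1^2\,B_p(\|f\|)$, where $B_p(\|f\|)$ denotes the right-hand bracket of \eqref{eq1.Barnett2008} with the $\|f\|$-norms replacing the $\|h\|$-norms, in the respective cases $f\in L^\infty$, $f\in L^{2\beta}$, $f\in L^2[a,b]$. Likewise $\widehat{\mathcal{P}}_g(h,h)\le H_2^2\,B_q(\|g\|)$.

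Next I would invoke the Cauchy--Bunyakovsky--Schwarz step \eqref{Gene.Cheb.POMP}, which gives $\left|\widehat{\mathcal{P}}_h(f,g)\right|\le\widehat{\mathcal{P}}_h(f,f)^{1/2}\widehat{\mathcal{P}}_h(g,g)^{1/2}$; by the identities above this equals $\widehat{\mathcal{P}}_f(h,h)^{1/2}\widehat{\mathcal{P}}_g(h,h)^{1/2}$, which is therefore bounded by $H_1H_2\,B_p(\|f\|)^{1/2}B_q(\|g\|)^{1/2}$. Multiplying the two brackets case by case and simplifying—for instance $(b-a)^{p+1}(b-a)^{q+1}=(b-a)^{p+q+2}$ in the bounded case, and $2^{-1/(2\beta)}\cdot2^{-1/(2\beta)}=2^{-1/\beta}$ together with $(b-a)^{p+1/\alpha}(b-a)^{q+1/\alpha}=(b-a)^{p+q+2/\alpha}$ in the $L^{2\beta}$ case, and the product of two factors $\tfrac{1}{\sqrt2}(b-a)^{p}\|f\|_2^2$, $\tfrac{1}{\sqrt2}(b-a)^{q}\|g\|_2^2$ in the $L^2$ case—reproduces exactly the three branches of \eqref{CBS4.Alomari2017}.

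There is no genuine analytic obstacle: the entire argument is the pre-Gr\"{u}ss factorisation followed by the Barnett--Dragomir bound applied after the symmetric swap of $h$ with $f$ (and $h$ with $g$). The only step that demands care is the arithmetic of collapsing $B_p(\|f\|)^{1/2}B_q(\|g\|)^{1/2}$ into a single closed-form expression and verifying that the exponents of $(b-a)$, the powers of $2$, and the factor $\tfrac12$ appearing in the $L^2$ branch combine precisely as displayed in \eqref{CBS4.Alomari2017}.
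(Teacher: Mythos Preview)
Your proposal is correct and follows essentially the same route as the paper's own proof: apply the Barnett--Dragomir bound \eqref{eq1.Barnett2008} after swapping the roles of $h$ with $f$ (respectively $g$), exploiting the symmetry $\widehat{\mathcal{P}}_h(f,f)=\widehat{\mathcal{P}}_f(h,h)$, and then feed the two resulting estimates into the pre-Gr\"{u}ss inequality \eqref{Gene.Cheb.POMP}. Your explicit discussion of the symmetry swap and the case-by-case arithmetic is in fact more detailed than the paper's terse ``the same inequality holds for $\widehat{\mathcal{P}}_h(g,g)$'' and ``substituting \ldots we get the required result.''
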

\begin{proof}
From \eqref{eq1.Barnett2008}, we have
\begin{align}
\label{CBS5.Alomari2017}
\left|{\widehat{\mathcal{P}}_h\left(f,f\right)}\right| &=\int_a^b
{h^2 \left( x \right)dx} \int_a^b {f^2 \left( x \right)dx}  -
\left( {\int_a^b {h\left( x \right)f\left( x \right)dx} }
\right)^2
\nonumber\\
&\le H_1^2 \left\{ \begin{array}{l}
 \frac{{\left( {b - a} \right)^{2p + 2} }}{{\left( {2p + 1} \right)\left( {2p + 2} \right)}}\left\|f \right\|_\infty ^4 ,\,\,\,\,\,\,\,\,\,\,\,\,\,\,\,\,\,\,\,\,\,\,\,\,\,{\rm{if}}\,\,\,f \in L^\infty  \left[ {a,b} \right] \\
  \\
 \frac{{2^{ - \frac{1}{\beta }} \left( {b - a} \right)^{2p + \frac{2}{\alpha }} }}{{\left( {2\alpha p + 1} \right)^{\frac{1}{\alpha }} \left( {2\alpha p + 2} \right)^{\frac{1}{\alpha }} }}\left\| f \right\|_{2\beta }^4 ,\,\,\,\,\,\,\,\,\,{\rm{if}}\,\,\,f \in L^{2\beta}  \left[ {a,b} \right] \\
  \\
 \frac{1}{2}\left( {b - a} \right)^{2p} \left\| f \right\|_2^4 , \,\,\,\,\,\,\,\,\,\,\,\,\,\,\,\,\,\,\,\,\,\,\,\,\,\,\,\,\,\,\,{\rm{if}}\,\,\,f \in L^2  \left[ {a,b} \right] \\
 \end{array} \right.,
\end{align}
The same inequality holds for
$\left|{\widehat{\mathcal{P}}_h\left(g,g\right)}\right|$.
Substituting \eqref{CBS5.Alomari2017} and that one resulting from
$\left|{\widehat{\mathcal{P}}_h\left(g,g\right)}\right|$ in
\eqref{Gene.Cheb.POMP} we get the required result.
\end{proof}

In \cite{Barnett}, we find another inequality for pointwise
bounded functions, which reads: If  there exist constants $M \ge m
> 0$ such that $h\left(x\right)\ge0$ and $M h\left(x\right) \ge
f\left(x\right) \ge m h\left(x\right)$ for almost every (a.e.) $x
\in \left[a, b\right]$, then
\begin{multline}
\label{eq2.Barnett2008}\int_a^b {h^2 \left( x \right)dx} \int_a^b
{f^2 \left( x \right)dx}  - \left( {\int_a^b {h\left( x
\right)f\left( x \right)dx} } \right)^2
\\
\le \frac{1}{4} \cdot \frac{{\left( {M - m} \right)^2
}}{{mM}}\left( {\int_a^b {h\left( x \right)f\left( x \right)dx} }
\right)^2.
\end{multline}
A straight forward result regarding
$\left|{\widehat{\mathcal{P}}_h \left(f,g\right)}\right| $ can be
deduced as follows:
\begin{theorem}
If there exist constants $M \ge m > 0$, $N \ge n > 0$ such that
$h\left(x\right)\ge0$, $M h\left(x\right) \ge f\left(x\right) \ge
m h\left(x\right)$ and $N h\left(x\right) \ge g\left(x\right) \ge
n h\left(x\right)$ for almost every (a.e.) $x \in \left[a,
b\right]$, then
\begin{align}
\label{CBS6.Alomari2017}
\left|{\widehat{\mathcal{P}}_h\left(f,g\right)}\right| &\le
\frac{1}{4} \cdot \frac{{\left( {M - m} \right)}}{\sqrt{mM}} \cdot
\frac{{\left( {N - n} \right) }}{\sqrt{Nn}}\left| {\int_a^b
{h\left( x \right)f\left( x \right)dx} } \right| \left| {\int_a^b
{h\left( x \right)g\left( x \right)dx} } \right|.
\end{align}
\end{theorem}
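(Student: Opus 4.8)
The plan is to reduce the bilinear estimate \eqref{CBS6.Alomari2017} to two diagonal estimates by means of the generalized pre-Gr\"uss inequality \eqref{gene.pre.gruss2017}, and then to bound each diagonal term by invoking Barnett and Dragomir's pointwise-bounded reverse CBS inequality \eqref{eq2.Barnett2008}.

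First I would record that the diagonal functional is exactly the reverse CBS quantity: from the algebraic expansion of the double integral carried out in the proof of \eqref{gene.pre.gruss2017} one has
\[
\widehat{\mathcal{P}}_h\left(f,f\right)=\int_a^b{h^2\left(x\right)dx}\int_a^b{f^2\left(x\right)dx}-\left(\int_a^b{h\left(x\right)f\left(x\right)dx}\right)^2\ge 0 ,
\]
and likewise for $g$. Since $h\ge 0$ and $Mh\left(x\right)\ge f\left(x\right)\ge mh\left(x\right)$ for a.e. $x\in[a,b]$, the hypotheses of \eqref{eq2.Barnett2008} hold for the pair $(h,f)$, so that
\[
\left|\widehat{\mathcal{P}}_h\left(f,f\right)\right|\le \frac14\cdot\frac{\left(M-m\right)^2}{mM}\left(\int_a^b{h\left(x\right)f\left(x\right)dx}\right)^2 ,
\]
and, taking square roots,
\[
\left|\widehat{\mathcal{P}}_h\left(f,f\right)\right|^{\frac12}\le \frac12\cdot\frac{M-m}{\sqrt{mM}}\left|\int_a^b{h\left(x\right)f\left(x\right)dx}\right| .
\]
Repeating the same argument with the pair $(h,g)$ and the constants $N\ge n>0$ gives
\[
\left|\widehat{\mathcal{P}}_h\left(g,g\right)\right|^{\frac12}\le \frac12\cdot\frac{N-n}{\sqrt{Nn}}\left|\int_a^b{h\left(x\right)g\left(x\right)dx}\right| .
\]

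Finally I would multiply the last two displays and substitute the product into \eqref{gene.pre.gruss2017}; this reproduces \eqref{CBS6.Alomari2017} with the stated constant $\tfrac14$. I do not expect a genuine analytic obstacle: the only two points that need a word of justification are (i) identifying $\widehat{\mathcal{P}}_h(f,f)$ with the left-hand side of \eqref{eq2.Barnett2008}, which is immediate from the identity used to prove \eqref{gene.pre.gruss2017}, and (ii) observing that the one-sided pointwise sandwich bounds of $f$ and $g$ relative to $h$ are precisely the assumptions under which \eqref{eq2.Barnett2008} was obtained, so that $h\ge 0$ suffices and no strict positivity or non-vanishing condition on $h$ is required.
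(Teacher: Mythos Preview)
Your proposal is correct and follows essentially the same route as the paper: apply \eqref{eq2.Barnett2008} to the diagonal terms $\widehat{\mathcal{P}}_h(f,f)$ and $\widehat{\mathcal{P}}_h(g,g)$, take square roots, and feed the resulting estimates into the generalized pre-Gr\"uss inequality \eqref{gene.pre.gruss2017}. The paper's proof is identical in structure, with your additional remarks on identifying the diagonal functional with the reverse CBS quantity and on why the pointwise sandwich hypotheses suffice being welcome clarifications.
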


\begin{proof}
From \eqref{eq2.Barnett2008}, we have
\begin{align*}
\left|{\widehat{\mathcal{P}}_h\left(f,f\right)}\right| &=\int_a^b
{h^2 \left( x \right)dx} \int_a^b {f^2 \left( x \right)dx}  -
\left( {\int_a^b {h\left( x \right)f\left( x \right)dx} }
\right)^2
\\
&\le \frac{1}{4} \cdot \frac{{\left( {M - m} \right)^2
}}{{mM}}\left( {\int_a^b {h\left( x \right)f\left( x \right)dx} }
\right)^2.
\end{align*}
Similarly, for $g$ we have
\begin{align*}
\left|{\widehat{\mathcal{P}}_h\left(g,g\right)}\right| &=\int_a^b
{h^2 \left( x \right)dx} \int_a^b {g^2 \left( x \right)dx}  -
\left( {\int_a^b {h\left( x \right)g\left( x \right)dx} }
\right)^2
\\
&\le \frac{1}{4} \cdot \frac{{\left( {N - n} \right)^2
}}{{nN}}\left( {\int_a^b {h\left( x \right)g\left( x \right)dx} }
\right)^2.
\end{align*}
Substituting both inequalities in the generalized pre-Gr\"{u}ss
inequality, i.e.,
\begin{align*}
\left|{\widehat{\mathcal{P}}_h\left(f,g\right)}\right| &\le
\left|{\widehat{\mathcal{P}}_h\left(f,f\right)}\right|^{\frac{1}{2}}
\left|{\widehat{\mathcal{P}}_h\left(g,g\right)}\right|^{\frac{1}{2}}
\\
&\le \frac{1}{4} \cdot \frac{{\left( {M - m} \right)}}{\sqrt{mM}}
\cdot \frac{{\left( {N - n} \right) }}{\sqrt{Nn}}\left| {\int_a^b
{h\left( x \right)f\left( x \right)dx} } \right| \left| {\int_a^b
{h\left( x \right)g\left( x \right)dx} } \right|,
\end{align*}
which completes the proof.
\end{proof}

\begin{remark}
$L_2$-bound for $\left|{\widehat{\mathcal{P}}_h
\left(f,g\right)}\right| $ can be obtained using the same approach
considered in the proof of Theorem \ref{thm6.Alomari2017} which is
dependent mainly on the inequality \eqref{Milo}. In this case the
obtained bound will be better than that one obtained in
\eqref{CBS1.Alomari2017}.
\end{remark}

\section{Some ramified inequalities\label{sec5}}

In this section we highlight the role of Pompeiu--Chebyshev
functional $\widehat{\mathcal{P}}_{h}\left(\cdot,\cdot\right)$ in
performing and obtaining new integral inequalities. Namely, by
employing the functional
$\widehat{\mathcal{P}}_{h}\left(\cdot,\cdot\right)$ some Hardy's
type inequalities are deduced. Another inequalities for
differentiable functions are considered.

\subsection{Hardy--Chebyshev functional}

If $f$ is nonnegative $p$-integrable $(p>1)$ function on $\left(0,
\infty\right)$, then $f$ is integrable over the interval $\left(0,
x\right)$ for each positive $x$ and
\begin{align}
\label{Hardy.inequality1}\int_0^{\infty}{\left(\frac{1}{x}\int_0^x{f\left(t\right)dt}\right)^pdx}
\le
\left({\frac{p}{p-1}}\right)^p\int_0^{\infty}{f^p\left(x\right)dx}.
\end{align}
The inequality \eqref{Hardy.inequality1} is known in literature as
\emph{Hardy Integral inequality}, which was proved by Hardy in
\cite{Hardy2}. A simple and elegant  proof that is closely to
Hardy original ideas and  appealing to P\'{o}lya's simplification
that avoids  technical details can be found in \cite{Kufner1}.

Another inequality due to Hardy \cite{Hardy1}, is that
\begin{align}
\label{Hardy.inequality2}\int_a^x {\left( {\frac{{F\left( t
\right)}}{t}} \right)^2 dt} &\le 2\int_a^x {\frac{1}{t}F\left( t
\right)f\left( t \right)dt}
\\
&\le \int_a^x {\left( {\frac{{F\left( t \right)}}{t}} \right)^2
dt} + 4\int_x^{2x} {\left( {\frac{{F\left( t \right)}}{t}}
\right)^2 dt},\nonumber
\end{align}
where $f$ is assumed to be  nonnegative  and integrable function
on $\left(a, \infty\right)$ $(a>0)$ and $F\left( x
\right)=\int_a^x{f\left( t \right)dt}$.\\

Almost one hundred year passed from the first result of Hardy
\eqref{Hardy.inequality1}. Through the last three decades, several
applications specially in differential inequalities which play a
main role  in the  theory of ordinary and partial differential
equations have been implemented and investigated. For
improvements, generalizations, extensions and useful applications
of Hardy's inequality \eqref{Hardy.inequality1} the reader may
refer to \cite{Hardy3}, \cite{Kufner2}, \cite{Kufner3} and
\cite{Opic}.\\

Next, we use Pompeiu--Chebyshev functional
$\widehat{\mathcal{P}}_{h}\left(\cdot,\cdot\right)$ to study some
inequalities of Hardy's type on bounded real interval
$\left[a,b\right]$ under some other circumstances. The approach
considered here, seems to be the first work detect or figure out
the application of Chebyshev functional and its generalizations in
studying Hardy type inequalities.\\

\noindent $\textbf{(1.)}$\,\, Let $a,b$ be  positive real numbers
with $a<b$. Let $f:\left[a,b\right]\to \mathbb{R}$ be an
absolutely continuous on $\left[a,b\right]$. In
\eqref{Pompeiu.Chebyshev.Identity}, choose
$G\left(x\right)=\frac{1}{x}$, $x\in  \left[a,b\right]$ with
$F\left(x\right)=\int_a^x{f\left(t\right)dt}$ for all $t\in
\left[a,b\right]$ and $h=1$. Assume $FG$ is integrable on
$\left[a,b\right]$ then we introduce the Hardy--Chebyshev
functional
\begin{align}
\label{Hardy.Chebyshev.ramified1} \mathcal{H} \left(F,G\right)
&=\left( {b - a} \right)\int_a^b {\frac{{F\left( t \right)}}{t}dt}
- \left[ {\ln b - \ln a} \right]\int_a^b {F\left( t \right)dt},
\end{align}
or equivalently, we write $\mathcal{H}
\left(F,G\right)=\widehat{\mathcal{P}}_{1}\left(F,G\right)$.

For instance, assume  the assumptions of Theorem
\ref{thmxx.Alomari2017} are fulfilled by our choice of $F$ and $G$
as above, then we have the \emph{Hardy type inequality }
\begin{align}
\label{Hardy.Chebyshev.ramified2} \int_a^b {\frac{{F\left( t
\right)}}{t}dt} \ge \frac{1}{\mathcal{L}\left(a,b\right)}
\cdot\int_a^b {F\left( t \right)dt},
\end{align}
where $\mathcal{L}\left(a,b\right)=\frac{b-a}{\ln b - \ln a}$ is
Logarithmic mean. The inequality \eqref{Hardy.Chebyshev.ramified2}
is sharp. Moreover, the inequality
\eqref{Hardy.Chebyshev.ramified2} is reversed if we applied
Corollary \ref{thm.cor.important} instead of Theorem
\ref{thmxx.Alomari2017}.\\

On the other hand, since $\mathcal{H}
\left(F,G\right)=\widehat{\mathcal{P}}_{1}\left(F,G\right)=\left(b-a\right)\mathcal{T}
\left(F,G\right)$, then using the bounds in
\eqref{general.gruss.inequality}, we can state the following
bounds:
\begin{proposition}\label{thm.Hardy.Chebyshev.bounds}
Let $b>a>0$. Let $f:\left[a,b\right] \to \mathbb{R}$ be an
absolutely continuous function on $\left [a,b\right]$. For
$G\left(x\right)=\frac{1}{x}$, $x\in \left[a,b\right]$ and
$F\left(x\right)=\int_a^x{f\left(t\right)dt}$, $t\in
\left[a,b\right]$, we have
\begin{align}
\left|{\mathcal{T}\left( {F,G} \right)} \right| \le\left\{
\begin{array}{l} \frac{{\left( {b - a} \right)^2 }}{12a^2}\left\|
{f} \right\|_\infty
,\,\,\,\,\,\,\,\, \qquad\qquad\qquad\qquad{\rm{if}}\,\,f \in L_{\infty}\left(\left[a,b\right]\right),\\
 \\
\frac{(b-a)^2}{4ab} \left( {M- m}
\right),\,\,\, \qquad\qquad\qquad\,\,\,\,\,\,\,\,{\rm{if}}\,\, m\le f \le M,\\
 \\
\frac{{\left( {b - a} \right)^{\frac{3}{2}}}}{{\pi ^2 }}
\left(ab\right)^{-\frac{3}{2}}\mathcal{L}_2\left(a,b\right)\cdot
\left\| {f} \right\|_2
,\,\,\,\,\,\,\,\,\,\,\,\,\,\,{\rm{if}}\,\,f,\in
L_{2}\left(\left[a,b\right]\right),\\
\\
\frac{1}{8}\left( {b - a} \right)\left( {\frac{b-a}{ab}} \right)
\left\| {f} \right\|_{\infty},\qquad\qquad\,\,\,\,\,\,\,\,\,\,
{\rm{if}}\,\, f \in L_{\infty}\left(\left[a,b\right]\right),
\end{array} \right. \label{general.Hardy.inequality}
\end{align}
where $L_s \left( {a,b} \right) = \left[ {\frac{{b^{s + 1}  - a^{s
+ 1} }}{{\left( {s + 1} \right)\left( {b - a} \right)}}}
\right]^{\frac{1}{s}}$, $s \in \mathbb{R}\backslash \left\{ {-1,0}
\right\} $ is the generalized Logarithmic mean.
\end{proposition}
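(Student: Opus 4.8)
The plan is to apply, essentially verbatim, the four classical bounds collected in Theorem \ref{thm.Chebyshev.bounds} to the pair $\left(F,G\right)$ on $\left[a,b\right]$, where $G\left(x\right)=1/x$ and $F\left(x\right)=\int_a^x f$. Under the stated hypotheses on $f$ the primitive $F$ is absolutely continuous on $\left[a,b\right]$ with $F'=f$, so Theorem \ref{thm.Chebyshev.bounds} is applicable, and the only preparatory work is to record the elementary data for $G$. Since $b>a>0$ one computes $G'\left(x\right)=-1/x^2$, hence $\left\|G'\right\|_\infty=1/a^2$ and
\[
\left\|G'\right\|_2=\left(\int_a^b x^{-4}\,dx\right)^{1/2}=\left(\frac{b^3-a^3}{3a^3b^3}\right)^{1/2}=\frac{\left(b-a\right)^{1/2}}{\left(ab\right)^{3/2}}\,\mathcal{L}_2\left(a,b\right),
\]
using $\mathcal{L}_2\left(a,b\right)^2=\frac{b^3-a^3}{3\left(b-a\right)}$; moreover $1/b\le G\le 1/a$ on $\left[a,b\right]$, so the range of $G$ has length $\frac1a-\frac1b=\frac{b-a}{ab}$.

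With these substitutions the first, third and fourth lines of \eqref{general.Hardy.inequality} fall out immediately. The first (Chebyshev) bound in \eqref{general.gruss.inequality} gives $\frac{\left(b-a\right)^2}{12}\left\|f\right\|_\infty\left\|G'\right\|_\infty=\frac{\left(b-a\right)^2}{12a^2}\left\|f\right\|_\infty$; the third ($L_2$) bound gives $\frac{b-a}{\pi^2}\left\|f\right\|_2\left\|G'\right\|_2=\frac{\left(b-a\right)^{3/2}}{\pi^2}\left(ab\right)^{-3/2}\mathcal{L}_2\left(a,b\right)\left\|f\right\|_2$; and the fourth (Ostrowski) bound, taking $G$ as the bounded factor (range-length $\tfrac{b-a}{ab}$) and $F$ as the factor with bounded derivative ($\left\|F'\right\|_\infty=\left\|f\right\|_\infty$), gives $\frac18\left(b-a\right)\cdot\frac{b-a}{ab}\cdot\left\|f\right\|_\infty$. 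For the second (Gr\"uss) line one must first turn the bound on $f=F'$ into a bound on $F$: integrating $m\le f\le M$ over $\left[a,x\right]$ yields $m\left(x-a\right)\le F\left(x\right)\le M\left(x-a\right)$, so $F$ takes values in an interval of length at most $\left(M-m\right)\left(b-a\right)$; the Gr\"uss bound, combined with the range-length $\tfrac{b-a}{ab}$ of $G$, then produces $\frac14\left(M-m\right)\left(b-a\right)\cdot\frac{b-a}{ab}=\frac{\left(b-a\right)^2}{4ab}\left(M-m\right)$.

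The only step that needs a sentence of justification rather than pure substitution is the oscillation estimate $\sup_{\left[a,b\right]}F-\inf_{\left[a,b\right]}F\le\left(M-m\right)\left(b-a\right)$ entering the second line, which follows from $\sup_{x}F\left(x\right)\le\sup_{x}M\left(x-a\right)$ and $\inf_{x}F\left(x\right)\ge\inf_{x}m\left(x-a\right)$; I expect this bookkeeping, together with correctly identifying which of $F,G$ plays the role of ``$f$'' in each line of Theorem \ref{thm.Chebyshev.bounds}, to be the main (and quite minor) obstacle, everything else being the four routine simplifications above. No sharpness assertion is made in the statement, so no extremiser analysis is required here; the identity $\mathcal{H}\left(F,G\right)=\left(b-a\right)\mathcal{T}\left(F,G\right)$ recorded just before the statement is what justifies phrasing the conclusion in terms of $\mathcal{T}\left(F,G\right)$.
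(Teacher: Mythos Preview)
Your approach is exactly what the paper intends: the proposition is stated immediately after the sentence ``using the bounds in \eqref{general.gruss.inequality}, we can state the following bounds'', so the paper's implicit proof is precisely your substitution of $(F,G)$ into Theorem~\ref{thm.Chebyshev.bounds}. Your computations for the first, third and fourth lines are correct and match the paper's.

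The second (Gr\"uss) line, however, contains a genuine gap --- and in fact the inequality as printed cannot be proved. Your oscillation estimate $\sup F-\inf F\le(M-m)(b-a)$ does not follow from $m(x-a)\le F(x)\le M(x-a)$: taking the supremum of the upper envelope and the infimum of the lower envelope gives $\sup F\le\max(M,0)(b-a)$ and $\inf F\ge\min(m,0)(b-a)$, which collapses to $(M-m)(b-a)$ only when $m\le 0\le M$. More decisively, the bound itself fails: take $f\equiv c>0$ on $[1,2]$, so $m=M=c$ and the right-hand side of the second line vanishes, yet with $F(x)=c(x-1)$ and $G(x)=1/x$ one computes $\mathcal{T}(F,G)=c\bigl(1-\tfrac{3}{2}\ln 2\bigr)\ne 0$. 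The defect lies in the paper's statement (the hypothesis $m\le f\le M$ bounds $F'$, not $F$, which is what Gr\"uss requires), not in your strategy; but your proposed justification of this line does not go through, and you correctly sensed it was the one place needing real argument.
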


On utilizing the pre-Gr\"{u}ss inequality
\eqref{gene.pre.gruss2017} (with $h\left( t \right)=1$), we have
the following result.
\begin{proposition}
Let $b>a>0$. Let $f:\left[a,b\right] \to \mathbb{R}$ be an
integrable on $\left [a,b\right]$. Then
\begin{align}
\left|{\mathcal{T}\left( {F,G} \right)} \right| \le  \left|
{\frac{1}{ab} - \left( {\frac{{\ln b - \ln a}}{{b - a}}} \right)^2
} \right|^{\frac{1}{2}} \mathcal{T}^{\frac{1}{2}} \left( {F,F}
\right),
\end{align}
where
\begin{align*}
 \mathcal{T} \left( {F,F} \right)=\frac{1}{{b - a}}\int_a^b {\left( {\int_a^x
{f\left( t \right)dt} } \right)^2 dx}  - \left( {\frac{1}{{b -
a}}\int_a^b {\left( {\int_a^x {f\left( t \right)dt} } \right)dx} }
\right)^2.
\end{align*}
\end{proposition}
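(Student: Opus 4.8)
The plan is to apply the classical pre-Gr\"uss inequality---the case $h\equiv 1$ of the generalized pre-Gr\"uss inequality \eqref{gene.pre.gruss2017}, namely $\left|\mathcal{T}(F,G)\right|\le\left|\mathcal{T}(F,F)\right|^{1/2}\left|\mathcal{T}(G,G)\right|^{1/2}$---to the pair $F(x)=\int_a^x f(t)\,dt$ and $G(x)=1/x$, and then to evaluate $\mathcal{T}(G,G)$ in closed form. Since $G$ is an explicit elementary function, this last step reduces to two trivial integrations, so there is essentially no computational work.

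First I would verify that everything is well defined: because $f$ is integrable, $F$ is absolutely continuous (hence continuous) on $\left[a,b\right]$, and $G(x)=1/x$ is continuous on $\left[a,b\right]$ since $a>0$; therefore $F$, $G$ and $FG$ are integrable on $\left[a,b\right]$, the functionals $\mathcal{T}(F,G)$, $\mathcal{T}(F,F)$, $\mathcal{T}(G,G)$ all make sense, and the pre-Gr\"uss inequality applies. Next I would compute, directly from the definition \eqref{eq1.1.5},
\begin{align*}
\mathcal{T}(G,G)=\frac{1}{b-a}\int_a^b\frac{dx}{x^2}-\left(\frac{1}{b-a}\int_a^b\frac{dx}{x}\right)^2=\frac{1}{ab}-\left(\frac{\ln b-\ln a}{b-a}\right)^2,
\end{align*}
using $\int_a^b x^{-2}\,dx=\frac1a-\frac1b=\frac{b-a}{ab}$ and $\int_a^b x^{-1}\,dx=\ln b-\ln a$. (One may note in passing that this number is in fact nonnegative, being equivalent to the mean inequality $\mathcal{L}(a,b)\ge\sqrt{ab}$, so the absolute value in the statement is a mere formality, kept only for uniformity with \eqref{gene.pre.gruss2017}.)

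Substituting this value into the pre-Gr\"uss bound gives
\begin{align*}
\left|\mathcal{T}(F,G)\right|\le\left|\frac{1}{ab}-\left(\frac{\ln b-\ln a}{b-a}\right)^2\right|^{\frac{1}{2}}\mathcal{T}^{\frac{1}{2}}(F,F),
\end{align*}
which is exactly the asserted inequality; the displayed formula for $\mathcal{T}(F,F)$ is nothing but \eqref{eq1.1.5} written with $F$ in place of both $f$ and $g$. The only points that require a word of care are the well-definedness noted above and the two elementary antiderivatives, both immediate; accordingly I do not anticipate any genuine obstacle in this argument.
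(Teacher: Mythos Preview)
Your proof is correct and follows precisely the route indicated in the paper: apply the classical pre-Gr\"uss inequality (the case $h\equiv 1$ of \eqref{gene.pre.gruss2017}) to the pair $F,G$ and compute $\mathcal{T}(G,G)$ explicitly. The paper itself merely signals this approach in the sentence preceding the proposition without spelling out the computation, so your write-up actually supplies the details the paper omits.
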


The Hardy--Chebyshev  functional \eqref{Hardy.Chebyshev.ramified1}
can be extended to be of Pompeiu--Chebyshev type  as follows:
\begin{align}
\label{Hardy.Chebyshev}\mathcal{H}_h \left(F,G\right) &=\int_a^b
{h^2\left( t \right)dt}\int_a^b {\frac{{F\left( t \right)}}{t}dt}
- \int_a^b {h\left( t \right)F\left( t \right)dt} \int_a^b
{\frac{h\left( t \right)}{t}dt},
\end{align}
Clearly, $\mathcal{H}_1 \left(F,G\right)=\mathcal{H}
\left(F,G\right)$.\\

\noindent $\textbf{(2.)}$\,\, To generalize the previous
presentations and the therefore to get more general inequalities
of Hardy type, let $b>a>0$ and assume $f$ is non-negative.
Therefore,
\begin{align}
\label{Hardy.Chebyshev.ramified11} \mathcal{H}
\left(F^p,G^p\right) &=\left( {b - a} \right)\int_a^b
{\left({\frac{{F\left( t \right)}}{t}}\right)^pdt} - \frac{1}{{p -
1}} \cdot\left[
 \frac{{ab^p - ba^p }}{{a^p b^p }}
\right]\int_a^b {F^p\left( t \right)dt},
\end{align}
or equivalently, we write $\mathcal{H}
\left(F^p,G^p\right)=\widehat{\mathcal{P}}_{1}\left(F^p,G^p\right)$,
for  $p>1$.

Assume  the assumptions of Theorem \ref{thmxx.Alomari2017} are
fulfilled by our choice of $F$ and $G$ as in the previous part
\textbf{(1)}, then we have the following new inequality of Hardy's
type on bounded intervals:
\begin{align}
\label{Hardy.Chebyshev.ramified21}  \int_a^b
{\left({\frac{{F\left( t \right)}}{t}}\right)^pdt} \ge\left( {a b}
\right)^{1-p} \mathcal{L}^p_p \left( {a,b} \right)\int_a^b
{F^p\left( t \right)dt},  \qquad p>1,
\end{align}
where $\mathcal{L}_p \left( {a,b}
\right)=\left[{\frac{b^{p-1}-a^{p-1}}{\left(p - 1\right)\left(b -
a\right)}}\right]^{\frac{1}{p}}$ is the generalized Logarithmic
mean. The inequality \eqref{Hardy.Chebyshev.ramified21}  is sharp.
Moreover, the inequality is reversed if we applied Corollary
\ref{thm.cor.important} instead of Theorem
\ref{thmxx.Alomari2017}.\\

On the other hand, since $\mathcal{T}
\left(F^p,G^p\right)=\frac{1}{\left(b-a\right)^2}\widehat{\mathcal{P}}_{1}\left(F^p,G^p\right)$,
then using the bounds in \eqref{general.gruss.inequality}, we can
state the following bounds:
\begin{proposition}\label{thm.Hardy.Chebyshev.bounds1}
Let $b>a>0$. Let $f:\left[a,b\right] \to \left[0,\infty\right)$ be
a non-negative and absolutely continuous function on $\left
[a,b\right]$. For $G\left(x\right)=\frac{1}{x}$, $x\in
\left[a,b\right]$ and
$F\left(x\right)=\int_a^x{f\left(t\right)dt}$, $t\in
\left[a,b\right]$, we have
\begin{align}
&\left|{\mathcal{T}\left(F^p,G^p\right)} \right|
\nonumber\\
&\le\left\{
\begin{array}{l} \frac{{\left( {b - a} \right)^{p+1} }}{12}p^2 a^{-\left(p+1\right)}\left\|
{f} \right\|^p_\infty
,\,\,\,\,\,\,\,\,\,\,\,\,\qquad\,\,\,\,\,\,\qquad\qquad{\rm{if}}\,\,f \in L_{\infty}\left(\left[a,b\right]\right),\\
 \\
\frac{1}{4}\left( {\frac{b^p-a^p}{a^pb^p}} \right)\left( {b- a}
\right)^p\left( {M^p- m^p}
\right),\,\,\, \qquad\,\,\,\,\qquad\,\,\,\,\,\,\,\,{\rm{if}}\,\, m\le f \le M,\\
 \\
p^2\frac{{\left( {b - a} \right)^{3/2}}}{{\pi ^2 }} \left( {ab}
\right)^{ - \left( {p + \frac{1}{2}} \right)}
\mathcal{L}_{2p}^{\frac{p}{2}} \left( {a,b} \right) \cdot \left\|
{F^{p - 1} f} \right\|_2 ,\,\,\,\,\,\,{\rm{if}}\,\,f,\in
L_{2}\left(\left[a,b\right]\right),\\
\\
\frac{p}{8}\left( {b - a} \right)^{p}\left(
{\frac{b^p-a^p}{a^pb^p}} \right) \left\| {f}
\right\|^p_{\infty},\qquad\qquad\qquad\,\,\,\,\,\,\,\,\,\,\,\,\,\,\,\,\,\,
{\rm{if}}\,\, f \in L_{\infty}\left(\left[a,b\right]\right),
\end{array} \right. \label{general.Hardy.inequality1}
\end{align}

\end{proposition}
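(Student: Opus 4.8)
The plan is to notice that $\mathcal{T}(F^p,G^p)$ is simply the classical Chebyshev functional \eqref{eq1.1.5} evaluated at the pair of functions $F^p$ and $G^p(x)=x^{-p}$ on $[a,b]$, so that each of the four lines is obtained by invoking, one at a time, the corresponding bound collected in Theorem~\ref{thm.Chebyshev.bounds} and translating its hypothesis and right-hand side through the elementary identities $F'=f$ and $(G^p)'(x)=-p\,x^{-(p+1)}$. Both $F^p$ and $G^p$ are absolutely continuous on $[a,b]$ (for $G^p$ because $a>0$; for $F^p$ because $F$ is absolutely continuous and bounded and $t\mapsto t^p$ is locally Lipschitz for $p>1$), so Theorem~\ref{thm.Chebyshev.bounds} applies.

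First I would assemble the ingredients that come from $f\ge 0$: the primitive $F$ is nondecreasing on $[a,b]$ with $0\le F(x)\le (x-a)\|f\|_\infty\le(b-a)\|f\|_\infty$, hence $(F^p)'=pF^{p-1}f$ satisfies $\|(F^p)'\|_\infty\le p(b-a)^{p-1}\|f\|_\infty^{p}$ and $\osc_{[a,b]}F^p=F(b)^p\le (M(b-a))^p$ whenever $m\le f\le M$; on the other side one has $b^{-p}\le G^p\le a^{-p}$, $\|(G^p)'\|_\infty=p\,a^{-(p+1)}$ and $\osc_{[a,b]}G^p=a^{-p}-b^{-p}=\frac{b^p-a^p}{a^pb^p}$. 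Substituting the $L_\infty$ data into the $\tfrac1{12}$-bound of Theorem~\ref{thm.Chebyshev.bounds} produces the first line; substituting the oscillations into the $\tfrac14$ (Gr\"uss) bound produces the second line; and substituting $\osc_{[a,b]}G^p$ together with $\|(F^p)'\|_\infty$ into the $\tfrac18$ mixed bound, with $G^p$ playing the role of the pointwise-bounded factor and $F^p$ that of the differentiable one, produces the fourth line.

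For the third line I would use the $\tfrac1{\pi^2}$ bound of Theorem~\ref{thm.Chebyshev.bounds} with derivatives in $L_2$: the factor $\|(F^p)'\|_2=p\|F^{p-1}f\|_2$ is carried along unchanged, and the only computation of substance is
\[ \|(G^p)'\|_2^2=p^2\int_a^b x^{-2(p+1)}\,dx=\frac{p^2}{2p+1}\cdot\frac{b^{2p+1}-a^{2p+1}}{(ab)^{2p+1}}, \]
which, together with the identity $\frac{b^{2p+1}-a^{2p+1}}{(2p+1)(b-a)}=\mathcal L_{2p}^{2p}(a,b)$ coming from the definition of the generalized logarithmic mean recorded in Proposition~\ref{thm.Hardy.Chebyshev.bounds}, rewrites $\|(G^p)'\|_2$ in a closed form of the shape $(b-a)^{1/2}(ab)^{-(p+1/2)}$ times a power of $\mathcal L_{2p}(a,b)$; multiplying by $\tfrac{b-a}{\pi^2}\cdot p\|F^{p-1}f\|_2$ gives the third line.

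I expect the only genuinely delicate part to be the bookkeeping in that last step --- keeping the exponents of $b-a$, of $ab$ and of $\mathcal L_{2p}$ mutually consistent while passing from $\|(G^p)'\|_2$ to the stated form --- together with making precise how the auxiliary kernel $h_2$ appearing in the fourth bound of Theorem~\ref{thm.Chebyshev.bounds} specializes when it is transplanted to the pair $(F^p,G^p)$, so that $\|h_2'\|_\infty$ reduces to $\|(F^p)'\|_\infty$. Beyond Theorem~\ref{thm.Chebyshev.bounds} itself no new analytic input is required.
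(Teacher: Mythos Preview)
Your approach is exactly what the paper does: the text immediately preceding this proposition simply says that since $\mathcal{T}(F^p,G^p)=\frac{1}{(b-a)^2}\widehat{\mathcal{P}}_1(F^p,G^p)$, one applies the four classical bounds collected in Theorem~\ref{thm.Chebyshev.bounds} to the pair $(F^p,G^p)$, and no further proof is given. Your identification of the relevant derivatives and oscillations, and your computation $\|(G^p)'\|_2=p(b-a)^{1/2}(ab)^{-(p+1/2)}\mathcal{L}_{2p}^{p}(a,b)$ via the generalized logarithmic mean, are precisely the intended substitutions; the exponent discrepancies you flag (e.g.\ $\mathcal{L}_{2p}^{p}$ versus the printed $\mathcal{L}_{2p}^{p/2}$, and $(b-a)^p M^p$ versus $(b-a)^p(M^p-m^p)$) are artefacts of the stated formulas rather than of your argument.
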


On utilizing the pre-Gr\"{u}ss inequality
\eqref{gene.pre.gruss2017} (with $h\left( t \right)=1$), we have
the following result.
\begin{proposition}
Let $b>a>0$. Let $f:\left[a,b\right] \to  \left[0,\infty\right)$
be an integrable on $\left [a,b\right]$. Then
\begin{align}
\left|{\mathcal{T}\left( {F^p,G^p} \right)} \right| \le  \left|
{\frac{{ab^{2p}  - ba^{2p} }}{{\left( {b - a} \right)\left( {2p -
1} \right)a^{2p} b^{2p} }} - \frac{{\left( {ab^p  - ba^p }
\right)^2 }}{{\left( {b - a} \right)^2 \left( {p - 1} \right)^2
a^{2p} b^{2p} }}} \right|^{\frac{1}{2}} \mathcal{T}^{\frac{1}{2}}
\left( {F^p,F^p} \right),
\end{align}
for $p>1$, where
\begin{align*}
 \mathcal{T} \left( {F^p,F^p} \right)=\frac{1}{{b - a}}\int_a^b {\left( {\int_a^x
{f\left( t \right)dt} } \right)^{2p} dx}  - \left( {\frac{1}{{b -
a}}\int_a^b {\left( {\int_a^x {f\left( t \right)dt} } \right)^pdx}
} \right)^2.
\end{align*}
\end{proposition}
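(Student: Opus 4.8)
The plan is to specialize the pre-Gr\"{u}ss inequality \eqref{gene.pre.gruss2017} to the weight $h\equiv 1$ (its classical form, recorded in the remark following that inequality), apply it to the pair $F^p$, $G^p$, and then evaluate the auxiliary functional $\mathcal{T}\left(G^p,G^p\right)$ in closed form using $G(x)=1/x$.

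First I would verify the integrability hypotheses. Since $f$ is integrable on $[a,b]$, the primitive $F(x)=\int_a^x f(t)\,dt$ is absolutely continuous, hence continuous and bounded on the compact interval $[a,b]$; consequently $F^p$ is bounded and, in particular, square integrable. Likewise $G^p(x)=x^{-p}$ is continuous on $[a,b]$ because $a>0$, hence also bounded and square integrable. Therefore the classical pre-Gr\"{u}ss inequality applies and yields
\begin{align*}
\left|\mathcal{T}\left(F^p,G^p\right)\right|\le \mathcal{T}^{\frac{1}{2}}\left(F^p,F^p\right)\cdot\mathcal{T}^{\frac{1}{2}}\left(G^p,G^p\right).
\end{align*}

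Next I would compute $\mathcal{T}\left(G^p,G^p\right)$ directly from the definition \eqref{eq1.1.5}. With $G^p(x)=x^{-p}$,
\begin{align*}
\mathcal{T}\left(G^p,G^p\right)=\frac{1}{b-a}\int_a^b x^{-2p}\,dx-\left(\frac{1}{b-a}\int_a^b x^{-p}\,dx\right)^2 .
\end{align*}
Using $p>1$ (so $2p-1>0$ and $p-1>0$, and no logarithmic case arises), the elementary power integrals are
\begin{align*}
\int_a^b x^{-2p}\,dx=\frac{a^{1-2p}-b^{1-2p}}{2p-1}=\frac{ab^{2p}-ba^{2p}}{(2p-1)\,a^{2p}b^{2p}},\qquad \int_a^b x^{-p}\,dx=\frac{a^{1-p}-b^{1-p}}{p-1}=\frac{ab^{p}-ba^{p}}{(p-1)\,a^{p}b^{p}},
\end{align*}
so that
\begin{align*}
\mathcal{T}\left(G^p,G^p\right)=\frac{ab^{2p}-ba^{2p}}{(b-a)(2p-1)\,a^{2p}b^{2p}}-\frac{\left(ab^{p}-ba^{p}\right)^2}{(b-a)^2(p-1)^2\,a^{2p}b^{2p}} .
\end{align*}
This is a variance-type quantity, hence automatically nonnegative, so the absolute value in the statement is only cosmetic. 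Substituting this identity into the pre-Gr\"{u}ss bound above produces exactly the asserted inequality, while the displayed closed form for $\mathcal{T}\left(F^p,F^p\right)$ is just \eqref{eq1.1.5} with both arguments equal to $F^p$.

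The whole argument is routine bookkeeping; the only point demanding any care is the final algebraic simplification, namely rewriting $a^{1-2p}-b^{1-2p}$ as $\left(ab^{2p}-ba^{2p}\right)/a^{2p}b^{2p}$ and $a^{1-p}-b^{1-p}$ as $\left(ab^{p}-ba^{p}\right)/a^{p}b^{p}$ so as to match the form in the statement. I do not anticipate any substantive obstacle beyond this.
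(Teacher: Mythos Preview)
Your proposal is correct and follows precisely the route the paper indicates: apply the pre-Gr\"uss inequality \eqref{gene.pre.gruss2017} with $h\equiv 1$ to the pair $F^p,G^p$ and then compute $\mathcal{T}(G^p,G^p)$ explicitly from the power integrals. The algebraic rewriting of $a^{1-2p}-b^{1-2p}$ and $a^{1-p}-b^{1-p}$ is handled correctly, and your remark that the absolute value is cosmetic (since $\mathcal{T}(G^p,G^p)$ is a variance) is a nice observation.
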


A generalization of Hardy--Chebyshev--Pompeiu functional
\eqref{Hardy.Chebyshev} can be presented such as:
\begin{align*}
\mathcal{H}_h \left(F^p,G^p\right) &=\int_a^b {h^2\left( t
\right)dt}\int_a^b {\left(\frac{{F\left( t
\right)}}{t}\right)^pdt} - \int_a^b {h\left( t \right)F^p\left( t
\right)dt} \int_a^b {\frac{h\left( t \right)}{t^p}dt}, \qquad p >
1.
\end{align*}
Clearly, $\mathcal{H}_1 \left(F^p,G^p\right)=\mathcal{H}
\left(F^p,G^p\right)$.\\

\noindent $\textbf{(3.)}$\,\, Another way to deal with
Hardy--Chebyshev functional  by defining the functions
$\phi:\left[a,x\right] \to \left[0,\infty\right)$, $x>a\ge0$ and
$\Phi_a:\left[0,b\right] \to \left[0,\infty\right)$ given by
$\Phi_a \left( x \right) = \frac{1}{x-a}\int_a^x {\phi \left( t
\right)dt}$, $b> x > a\ge0$. Making use of the  Chebyshev
functional $\mathcal{T}\left(\cdot,\cdot\right)$, we can write:
\begin{align*}
\mathcal{T}\left( {\Phi_a ,\Phi_a  } \right) &=
\frac{1}{{b}}\int_0^b {\Phi_a ^{2} \left( t \right)dt}  - \left(
{\frac{1}{{b }}\int_0^b {\Phi_a  \left( t \right)dt} } \right)^2
\\
&= \frac{1}{{b}}\int_0^b {\left( {\frac{1}{x-a}\int_a^x {\phi
\left( t \right)dt} } \right)^{2} dx}  - \left( {\frac{1}{{b
}}\int_0^b {\left( {\frac{1}{x-a}\int_a^x {\phi \left( t
\right)dt} } \right) dx} } \right)^2.
\end{align*}
for $p>1$. In case $a=0$, we have \emph{the Hardy functional}:
\begin{align*}
\mathcal{T}\left( {\Phi_0 ,\Phi_0 } \right)= \frac{1}{{b}}\int_0^b
{\left( {\frac{1}{x}\int_0^x {\phi \left( t \right)dt} }
\right)^{2} dx}  - \left( {\frac{1}{{b }}\int_0^b {\left(
{\frac{1}{x}\int_0^x {\phi \left( t \right)dt} } \right) dx} }
\right)^2,
\end{align*}
and we have the following bound:
\begin{theorem}
Let $\Phi_0 : \left[0,b\right]\to \left(0,\infty\right)$ be such
that $\Phi_0 $  is convex on $\left[0,b\right]$. If $\Phi_0
^{\prime}\in L_p\left[0,b\right]$ $1< p \le q$, then the
inequality
\begin{align}
\label{eq2.2.46b}\left| {\mathcal{T}\left( {\Phi_0 ,\Phi_0 }
\right)} \right| \le b \frac{{pq\sin \left( {{\textstyle{\pi \over
p}}} \right)\sin \left( {{\textstyle{\pi \over q}}}
\right)}}{{4\pi^2 \left( {p - 1} \right)^{\frac{1}{p}} \left( {q -
1} \right)^{\frac{1}{q}} }} \left\| \Phi_0 ^{\prime}
\right\|_p\left\| \Phi_0 ^{\prime} \right\|_q,
\end{align}
holds, for all $p,q>1$ with $\frac{1}{p}+\frac{1}{q}=1$.
\end{theorem}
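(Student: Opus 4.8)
The plan is to reduce the bound to a sharp one-dimensional $L^r$-Wirtinger (Poincar\'e) inequality, applied once with $r=p$ and once with $r=q$, and then to glue the two estimates together by H\"older's inequality. It is convenient to write $K_r:=\dfrac{r\sin(\pi/r)}{\pi(r-1)^{1/r}}$ for $r>1$, so that the target constant is precisely $\dfrac{b}{4}K_pK_q$; note that $K_2=2/\pi$, which will recover the familiar $1/\pi^2$-type constant of \eqref{general.gruss.inequality}.

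First I would use the variance representation $\mathcal{T}(\Phi_0,\Phi_0)=\frac1b\int_0^b\bigl(\Phi_0(x)-\bar\Phi_0\bigr)^2dx$ with $\bar\Phi_0=\frac1b\int_0^b\Phi_0$, together with the elementary fact that this quadratic in a shifting constant is minimised when that constant equals $\bar\Phi_0$, to obtain for every $\eta\in[0,b]$
\begin{align*}
\mathcal{T}(\Phi_0,\Phi_0)\le \frac1b\int_0^b\bigl(\Phi_0(x)-\Phi_0(\eta)\bigr)^2dx .
\end{align*}
I would then take $\eta=b/2$, the symmetric choice producing the optimal power of $b$.

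Next comes the heart of the matter, a sharp $L^r$-Wirtinger estimate: for an absolutely continuous $u$ on an interval of length $\ell$ vanishing at one of its endpoints one has $\|u\|_{L^r}\le K_r\,\ell\,\|u'\|_{L^r}$, the constant $K_r\ell$ being the reciprocal of the $r$-th root of the first eigenvalue of the one-dimensional $r$-Laplacian with a Dirichlet condition at that endpoint and the natural condition at the other (extremal a generalised sine; for $r=2$ this is the classical quarter-period computation). Splitting $[0,b]$ at $b/2$ and applying this on each half to $g:=\Phi_0-\Phi_0(b/2)$, which vanishes at $b/2$, then summing $r$-th powers, gives $\|g\|_{L^r[0,b]}\le \tfrac b2 K_r\|\Phi_0'\|_{L^r[0,b]}$ for $r\in\{p,q\}$. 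H\"older's inequality with the conjugate pair $p,q$ gives $\|g\|_{L^2}^2=\int_0^b|g|\,|g|\,dx\le \|g\|_{L^p}\|g\|_{L^q}$, and stringing the three inequalities together,
\begin{align*}
\mathcal{T}(\Phi_0,\Phi_0)\le \frac1b\|g\|_{L^2}^2\le \frac1b\Bigl(\tfrac b2K_p\|\Phi_0'\|_p\Bigr)\Bigl(\tfrac b2K_q\|\Phi_0'\|_q\Bigr)=\frac b4K_pK_q\|\Phi_0'\|_p\|\Phi_0'\|_q ,
\end{align*}
which is \eqref{eq2.2.46b} after substituting the values of $K_p$ and $K_q$.

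The main obstacle is the $L^r$-Wirtinger step: pinning down the sharp constant $K_r$ on an interval with a single Dirichlet endpoint. For $r=2$ this is routine, but for general $r$ the factor $\sin(\pi/r)/(r-1)^{1/r}$ comes out of the spectral theory of the one-dimensional $r$-Laplacian / generalised trigonometric functions, so I would quote it from the literature rather than reprove it. The convexity hypothesis on $\Phi_0$ does not appear to be needed along this route; if one preferred to bypass the $r$-Laplacian machinery, the monotonicity of $\Phi_0'$ that convexity supplies could be used to analyse the extremal problem on each half-interval by hand, or a cruder constant accepted at the price of weakening the stated bound.
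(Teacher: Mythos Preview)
Your argument is correct and reaches the same endpoint as the paper, but via a cleaner route. Both proofs end up bounding $|\mathcal{T}(\Phi_0,\Phi_0)|$ by $\tfrac{1}{b}\|g\|_p\|g\|_q$ with $g=\Phi_0-\Phi_0(b/2)$ and then invoke the same sharp $L^r$-Wirtinger estimate (the paper quotes it as Alomari's inequality \eqref{eq.y1}, you phrase it as the first eigenvalue of the one-dimensional $r$-Laplacian with a single Dirichlet endpoint --- these give the identical constant $K_r$). The difference lies in how one gets to that intermediate bound. The paper starts from the Sonin-type identity $\mathcal{T}(\Phi_0,\Phi_0)=\tfrac1b\int_0^b(\Phi_0-\Phi_0(b/2))(\Phi_0-\bar\Phi_0)$, applies H\"older to the two factors, and then uses the Hermite--Hadamard inequality (hence convexity) to replace $\|\Phi_0-\bar\Phi_0\|_q$ by $\|\Phi_0-\Phi_0(b/2)\|_q$. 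You instead use that the variance $\tfrac1b\int_0^b(\Phi_0-c)^2$ is minimised at $c=\bar\Phi_0$, so shifting to $c=\Phi_0(b/2)$ can only increase it, and then apply H\"older to $\|g\|_2^2$. Your route is more elementary, and, as you observe, it shows that the convexity hypothesis is not actually needed for the conclusion; the paper's approach, by contrast, genuinely consumes convexity at the Hermite--Hadamard step.
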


\begin{proof}
Utilizing the triangle inequality on the right hand side of the
identity
\begin{align}
\label{eq2.2.47b}\mathcal{T}\left( {\Phi_0,\Phi_0} \right):=
\frac{1}{{b}}\int_0^b {\left[ {\Phi_0 \left( t \right) - \Phi_0
\left({\frac{ b}{2}}\right)} \right]\left[ {\Phi_0 \left( t
\right) - \frac{1}{{b}}\int_0^b {\Phi_0 \left( s \right)ds} }
\right]dt},
\end{align}
and using the H\"{o}lder's inequality, we get
\begin{align}
&\left| {\mathcal{T}\left( {\Phi_0,\Phi_0} \right)} \right|
\nonumber\\
&= \left| {\frac{1}{{b  }}\int_0^b {\left[ {\Phi_0\left( t \right)
- \Phi_0\left( {\frac{{ b}}{2}} \right) } \right]\left[
{\Phi_0\left( t \right) - \frac{1}{{b}}\int_0^b {\Phi_0\left( s
\right)ds} } \right]dt} } \right|
\nonumber\\
&\le \frac{1}{{b}}\int_0^b {\left| {\Phi_0\left( t \right) -
\Phi_0\left( {\frac{{ b}}{2}} \right) } \right|\left|
{\Phi_0\left( t \right) - \frac{1}{{b  }}\int_0^b {\Phi_0\left( s
\right)ds} } \right|dt}
\nonumber\\
&\le \frac{1}{{b }}\left( {\int_0^b {\left| {\Phi_0\left( t
\right) - \Phi_0\left( {\frac{{ b}}{2}} \right) } \right|^p dt} }
\right)^{1/p} \times\left( {\int_0^b {\left| {\Phi_0\left( t
\right) - \frac{1}{{b }}\int_a^b {\Phi_0\left( s \right)ds} }
\right|^q dt} } \right)^{1/q}.\label{eq2.2.48b}
\end{align}
Since $\Phi_0$ is convex on $[a,b]$, then by Hermite-Hadamard
inequality; i.e.,
\begin{align*}
\Phi_0\left( {\frac{{  b}}{2}} \right) \le \frac{1}{{b }}\int_a^b
{\Phi_0\left( t \right)dt}  \le \frac{{\Phi_0\left(0 \right) +
\Phi_0\left( b \right)}}{2},
\end{align*}
which follows that
\begin{align*}
\left| {\Phi_0\left( x \right) - \frac{1}{{b }}\int_0^b
{\Phi_0\left( t \right)dt} } \right|\le \left| {\Phi_0\left( x
\right) - \Phi_0\left( {\frac{{ b}}{2}} \right)  } \right|.
\end{align*}
Now, Alomari in \cite{Alomari} proved that if $f$ is absolutely
continuous functions whose first derivative $f^{\prime}$ is
positive and $ \int_a^b {\left( {f^{\prime}\left( t \right)}
\right)^p dt} < \infty $, then for any $\xi \in (a,b)$, the
inequality
\begin{align}
\label{eq.y1}\int_a^b {\left| {f\left( t \right) - f\left( \xi
\right)} \right|^p dt} \le \left( {\frac{{p^p \sin ^p \left(
{{\textstyle{\pi  \over p}}} \right)}}{{\pi ^p \left( {p - 1}
\right)}}} \right)\left[ {\frac{{b - a}}{2} + \left| {\xi  -
\frac{{a + b}}{2}} \right|} \right]^{p} \cdot\int_a^b {\left(
{f^{\prime} \left( x \right)} \right)^p dx},
\end{align}
holds for all $p>1$.

Since $\Phi_0$ is convex and therefore is absolutely continuous on
$[0,b]$ with $\Phi_0^{\prime} \in L_p[0,b]$ $(1< p <q)$, then by
\eqref{eq.y1} we can obtain the following two inequalities which
are of great interest and not less important than the main result
 \eqref{eq2.2.46b} itself:
\begin{align}
\label{eq2.2.49b}  \left( {\int_0^b {\left| {\Phi_0\left( x
\right) - \Phi_0\left( {\frac{{ b}}{2}} \right)} \right|^p dx} }
\right)^{\frac{1}{p}}  \le \frac{{p\sin \left( {{\textstyle{\pi
\over p}}} \right)}}{{2\pi \left( {p - 1} \right)^{\frac{1}{p}}
}}b\left\| \Phi_0^{\prime} \right\|_p,
\end{align}
and
\begin{align}
\label{eq2.2.50b}\left(\int_0^b{\left| {\Phi_0\left( x \right) -
\frac{1}{{b  }}\int_0^b {\Phi_0\left( t \right)dt} } \right|^q
dx}\right)^{1/q}&\le \left( {\int_0^b {\left| {\Phi_0\left( x
\right) - \Phi_0\left( {\frac{{ b}}{2}} \right)} \right|^q dx} }
\right)^{\frac{1}{q}}
\nonumber\\
&\le \frac{{q\sin \left( {{\textstyle{\pi \over q}}}
\right)}}{{2\pi \left( {q - 1} \right)^{\frac{1}{q}} }}b\left\|
\Phi_0^{\prime} \right\|_q,
\end{align}
where, $p>1$ and $\frac{1}{p} + \frac{1}{q} = 1$. Substituting the
inequalities (\ref{eq2.2.49b}), (\ref{eq2.2.50b}) and
(\ref{eq2.2.48b}), we get the required result \eqref{eq2.2.46b}.
\end{proof}

\begin{remark}
The corresponding version of Hardy's inequality on bounded
interval $\left[0,b\right]$ $(b>0)$, is given in the inequality
\eqref{eq2.2.49b} \emph{(or \eqref{eq2.2.50b})}. To treat this
formally, we can say that: For the absolutely continuous function
$\Phi_0 : \left[0,b\right]\to \left(0,\infty\right)$ such that
$\Phi_0 ^{\prime}\in L_p\left[0,b\right]$ $(p>1)$. If
$\Phi_0\left( \frac{b}{2} \right)=0 $ \emph{(or
$\int_a^b{\phi\left(t\right)dt}=0$)}, then we the inequality
\begin{align*}
 \int_0^b {\left| {\Phi_0\left( x \right)} \right|^p dx}=
 \int_0^b {\left( {\frac{1}{x}\int_0^x {\phi \left( t \right)dt}} \right)^p dx}
\le  \frac{{p^p\sin^p \left( {{\textstyle{\pi \over p}}}
\right)}}{{2^p\pi^p \left( {p - 1} \right) }}b^p  \left\|
\Phi_0^{\prime} \right\|^p_p.\\
\end{align*}
In special case, let $b=2\pi$ and $p=2$, then
\begin{align*}
\int_0^{2\pi} {\left( {\frac{1}{x}\int_0^x {\phi \left( t
\right)dt}} \right)^2 dx} \le  4   \left\| \Phi_0^{\prime}
\right\|^2_2, \qquad 0 \le x \le 2\pi.
\end{align*}
\end{remark}

\subsection{Other inequalities}
In \eqref{Pompeiu.Chebyshev.Identity}, choose $g=\frac{1}{f}$,
with $f\left(x\right)>0$, for all $x\in \left[a,b\right]$,  $f
\left( b \right) \ne f \left( a \right)$ and $h=f^{\prime}$ (where
$f$ is assumed to be differentiable in this case), then the
following identity can be ramified from
\eqref{Pompeiu.Chebyshev.Identity}
\begin{align}
\label{Pompeiu.Chebyshev.ramified1}\widehat{\mathcal{P}}_{f^{\prime}}\left(f,\frac{1}{f}\right)
&=\left(b-a\right)\int_a^b {\left(f^{\prime} \left( x
\right)\right)^2dx} -\frac{{f^2 \left( b \right) - f^2 \left( a
\right)}}{2} \cdot \left[ {\ln f\left( b \right) - \ln f\left( a
\right)} \right]
\end{align}
Thus several inequalities can be obtained for this functional.

For instance, assume  the assumptions of Theorem
\ref{thmxx.Alomari2017} are fulfilled by our choice of $g$ and $h$
as above, then we have the new inequality
\begin{align}
\label{Pompeiu.Chebyshev.ramified2} \int_a^b {\left(f^{\prime}
\left( x \right)\right)^2dx} \ge\frac{{A\left( {f\left( a
\right),f\left( b \right)} \right)}}{{L\left( {f\left( a
\right),f\left( b \right)} \right)}} \cdot \left[ {f\left( b
\right) - f\left( a \right)} \right],
\end{align}
where $A\left(\cdot,\cdot\right)$ is the arithmetic mean and
$L\left(\cdot,\cdot\right)$ is the Logarithmic mean. The
inequality is reversed if we applied Corollary
\ref{thm.cor.important} instead of Theorem
\ref{thmxx.Alomari2017}.\\

Also, since $f \left( b \right) \ne f \left( a \right)$, then we
note that
\begin{align}
\label{Pompeiu.Chebyshev.ramified3}\widehat{\mathcal{P}}_{f^{\prime}}\left(f,f\right)
&=   \left[ {f^2 \left( b \right) - f^2 \left( a \right)} \right]
\int_a^b {\left(f^{\prime} \left( x \right)\right)^2dx} -
\frac{1}{4} \left[ {f^2 \left( b \right) - f^2 \left( a \right)}
\right]^2.
\end{align}
For instance, applying  \eqref{xx.Alomari2017} for
\eqref{Pompeiu.Chebyshev.ramified3}, we get
\begin{align*}
\int_a^b {\left(f^{\prime} \left( x \right)\right)^2dx}   \ge
\frac{1}{4} \left[ {f^2 \left( b \right) - f^2 \left( a \right)}
\right].
\end{align*}
The inequality is reversed if we applied Corollary
\ref{thm.cor.important} instead of  Theorem
\ref{thmxx.Alomari2017}.\\

Also, in \eqref{Pompeiu.Chebyshev.Identity}, choose
$g=f^{\prime\prime}$, with $f\left(x\right)>0$ for all $x\in
\left[a,b\right]$ and $h\equiv h$ (where $f$ is assumed to be
twice differentiable in this case), then the following identity
can be ramified from \eqref{Pompeiu.Chebyshev.Identity}
\begin{align}
\label{Pompeiu.Chebyshev.ramified1c}\widehat{\mathcal{P}}_{h}\left(f,f^{\prime\prime}\right)
&=\int_a^b { h^{2} \left( x \right)  dx}\int_a^b { f \left( x
\right) f^{\prime\prime} \left( x \right)dx} - \int_a^b {h\left( t
\right)f\left( t \right)dt} \int_a^b {h\left( t
\right)f^{\prime\prime}\left( t \right)dt}
\end{align}
Using the results in Sections \ref{sec2}, \ref{sec3} and
\ref{sec4}, several inequalities can be obtained for this
functional.

Among others, assume  the assumptions of Theorem
\ref{thmxx.Alomari2017} are fulfilled by our choice of $f$ and $g$
as above, then we have the new inequality
\begin{align}
\label{Pompeiu.Chebyshev.ramified2c} \int_a^b { h^{2} \left( x
\right)  dx}\int_a^b { f \left( x \right) f^{\prime\prime} \left(
x \right)dx} \ge \int_a^b {h\left( t \right)f\left( t \right)dt}
\int_a^b {h\left( t \right)f^{\prime\prime}\left( t \right)dt}
\end{align}
The inequality is reversed if we applied Corollary
\ref{thm.cor.important} instead of Theorem
\ref{thmxx.Alomari2017}.

Hence, by choosing any function $h$ we can state various
inequalities. For example, let
\begin{enumerate}
\item $h=1$, then we have
\begin{align*}
 \int_a^b { f \left( x \right) f^{\prime\prime}
\left( x \right)dx} \ge \frac{f^{\prime}\left( b
\right)-f^{\prime}\left( a \right)}{b-a}\int_a^b { f\left( x
\right)dx}.
\end{align*}
\item $h=t$, then we have
\begin{align*}
 \frac{b^3-a^3}{3} \int_a^b { f \left( x \right) f^{\prime\prime} \left(
x \right)dx} \ge \left\{ bf\left( b \right) - af\left( a \right) -
\left[ {f\left( b \right) - f\left( a \right)}
\right]\right\}\int_a^b {x f \left( x \right)  dx}.
\end{align*}
\item $h=\frac{1}{f}$, $f>0$, then we have
\begin{align*}
\int_a^b { \frac{dx}{f^{2} \left( x \right)}  }\int_a^b { f \left(
x \right) f^{\prime\prime} \left( x \right)dx} \ge
\left(b-a\right)\int_a^b {\frac{f^{\prime\prime}\left( x
\right)}{f\left(x \right)}dx}.
\end{align*}

\item $h=f^{\prime}$,  then we have
\begin{align*}
\int_a^b {  \left(f^{\prime} \left( x \right) \right)^2dx
}\int_a^b { f \left( x \right) f^{\prime\prime} \left( x
\right)dx} \ge \frac{1}{4}\left[ {f^2 \left( b \right) - f^2
\left( a \right)} \right]\left[ {\left( {f'\left( b \right)}
\right)^2  - \left( {f'\left( a \right)} \right)^2 } \right].
\end{align*}
\end{enumerate}
All inequalities are reversed if we applied Corollary
\ref{thm.cor.important} instead of  Theorem
\ref{thmxx.Alomari2017}.\\

\begin{remark}
The Pompeiu--Chebyshev functional
$\widehat{\mathcal{P}}_{\left(\cdot
\right)}\left(\cdot,\cdot\right)$ is very rich and fruitful to
generate family of integral inequalities involving functions and
their derivatives, as we have seen in the Section \ref{sec5}.
\end{remark}

\centerline{}

\centerline{}

\end{document}